%!TEX TS-program = pdflatexmk
%!TEX encoding = UTF-8 Unicode

% -----< INFORMATION >------------------------------
% LaTeX Template made by Dongha Lee
% E-mail : leejydh97@gmail.com
% Version: 2024-02-15 (light)
%
% Title  : The renormalization of volume and Chern-Simons invariant for hyperbolic 3-manifolds
% Author : Dongha Lee
% Edited : 2025-06-24

% -----< CLASS >------------------------------------
\pdfpageattr{/Group <</S /Transparency /I true /CS /DeviceRGB>>}
\documentclass[letterpaper, twoside, 10pt, leqno]{amsart}

% -----< PACKAGE >----------------------------------
%\usepackage{xcolor}
\usepackage{amssymb}
\usepackage{mathtools}
\usepackage{mathrsfs}
\usepackage{tikz}
    \usetikzlibrary{cd}
\usepackage{enumitem}
    \setlist[enumerate]{label={\textnormal{(\alph*) }}, nosep, align=left, left={\parindent}, labelsep=0pt, widest=b, rightmargin=0pt}
\usepackage[alphabetic, nobysame]{amsrefs}
\usepackage[colorlinks=true, linkcolor=blue, citecolor=blue]{hyperref}
\usepackage[hyphenbreaks]{breakurl}

% -----< SETTING >----------------------------------
%\renewcommand{\familydefault}{\sfdefault}
%\linespread{1.25} %1.67
\flushbottom
\allowdisplaybreaks
\numberwithin{equation}{section}

% -----< ENVIRONMENT >------------------------------
\theoremstyle{definition}
    \newtheorem{pdef}{Definition}[section]
\theoremstyle{plain}
    \newtheorem{pthrm}{Theorem}[section]
    \newtheorem{pmain}{Theorem}
    \newtheorem{pmaincor}[pmain]{Corollary}
    \newtheorem{plem}[pthrm]{Lemma}
    \newtheorem{pcor}[pthrm]{Corollary}
    \newtheorem{pprop}[pthrm]{Proposition}
\theoremstyle{remark}
    \newtheorem{prmrk}{Remark}[section]
    \newtheorem{pwarn}{Warning}[section]

% -----< COMMAND >----------------------------------
%\newcounter{fn}
%\newcommand{\fn}{\stepcounter{fn}\fnsymbol{fn}}
%\newcommand{\tagfn}[1]{\stepcounter{fn}\tag{\fnsymbol{fn}}\label{#1}}
%

\DeclareMathOperator{\csch}{csch}

\newcommand{\iso}{\cong}
\newcommand{\id}{\mathrm{id}}
\newcommand{\injto}{\hookrightarrow}
\newcommand{\surjto}{\twoheadrightarrow}

\newcommand{\isoto}[1][\sim]{\stackrel{#1}{\to}}
\newcommand{\longto}{\longrightarrow}

\newcommand{\longbijto}{\longleftrightarrow}

\newcommand{\Hom}{\mathrm{Hom}}
\newcommand{\Isom}{\mathrm{Isom}}
\newcommand{\del}{\nabla}

\DeclareMathOperator{\tr}{tr}

\DeclareMathOperator{\adj}{adj}
\DeclareMathOperator{\Ad}{Ad}

\newcommand{\I}{\mathrm{I}}
\newcommand{\II}{\mathrm{I\!\!\:I}}
\newcommand{\vol}{\mathrm{vol}}
\newcommand{\Vol}{\mathrm{Vol}}
\newcommand{\cs}{\mathrm{cs}}
\newcommand{\CS}{\mathrm{CS}}
\DeclareMathOperator{\re}{Re}
\DeclareMathOperator{\im}{Im}

% -----< DOCUMENT >---------------------------------
\begin{document}

\title[The renormalization of volume and Chern-Simons invariant]{The renormalization of volume \\ and Chern-Simons invariant \\ for hyperbolic 3-manifolds}
\author[Dongha Lee]{Dongha Lee}
\address{Korea Advanced Institute of Science and Technology, 291, Daehak-ro, Yuseong-gu, Daejeon, Republic of Korea}
\email{leejydh97@kaist.ac.kr}
%\urladdr{\url{https://sites.google.com/view/dongha-lee/home}}
\date{\today}
\subjclass[2020]{58J28, 57K32}
\keywords{Chern-Simons invariant, convex-cocompact hyperbolic manifold}
\begin{abstract}
We renormalize the Chern-Simons invariant for convex-cocompact hyperbolic 3-manifolds by finding the asymptotics along an equidistance foliation. We prove that the metric Chern-Simons invariant has an exponentially divergent term given by the integral of the torsion 2-form with respect to a Weitzenb{\"o}ck connection. This produces the asymptotics of hyperbolic volume plus the metric Chern-Simons invariant, which is often called complex volume. The leading coefficient of the asymptotics introduces a complex-valued quantity consisting of mean curvature and torsion 2-form, which is defined on smooth surfaces embedded in a Riemann-Cartan 3-manifold.
\end{abstract}
\maketitle

\section{Introduction}

For noncompact Einstein manifolds having infinite volume, the notion of \emph{renormalized volume} has been introduced by Henningson and Skenderis~\cite{MR1644988} and further developed by Graham~\cite{MR1758076} as an invariant that replaces classical volume. The underlying principle is intuitive: One computes the asymptotics of the volume of the \( r \)-neighborhood of a compact subset as \( r \to \infty \), and then discards the divergent terms to obtain a finite value. The renormalized volume has been extensively studied for both mathematical and physical purposes.

In particular, it has played a significant role in hyperbolic geometry. The case of noncompact hyperbolic \( 3 \)-manifolds having infinite volume was studied by Karsnov and Schlenker~\cite{MR2386723}, following the ideas of Epstein's work in~\cite{EpsteinWS} and~\cite{MR1813434}. They introduced insightful notions such as \emph{\( \mathrm{W} \)-volume} and \emph{metric at infinity}, which have become central tools in the field. A number of subsequent works have explored the role of renormalized volume in hyperbolic geometry. Notable contributions include comparisons with the volume of convex core~\cites{MR3188032, MR3732685}, estimates for certain classes of hyperbolic \( 3 \)-manifolds~\cite{MR4887772}, and analyses of the behavior on the deformation space of hyperbolic structures~\cites{MR3934591, MR4028105, MR4668096, MR4571574}. It also served as a key ingredient in~\cite{MR3784525} to establish a lower bound of the topological entropy of pseudo-Anosov maps on surfaces.

Meanwhile, Thurston~\cite{MR0648524} and Neumann and Zagier~\cite{MR0815482} observed that the \emph{Chern-Simons invariant}, introduced in~\cite{MR0353327}, plays the role of an ``imaginary counterpart'' to the volume of hyperbolic \( 3 \)-manifolds. At the level of differential forms, it turned out that the \( \mathrm{PSL}_2 {\left( \mathbb{C} \right)} \)-Chern-Simons \( 3 \)-form \( \cs {\left( \omega_{\hat{\varrho}} \right)} \) consists of the volume form \( \mathrm{d} \vol \) and the \( \mathrm{SO}(3) \)-Chern-Simons \( 3 \)-form \( \cs {\left( \bar{\omega} \right)} \) up to an exact form as follows (Proposition~\ref{P:CS-form}).
\begin{align}
4 \boldsymbol{i} \boldsymbol{\pi}^2 \sigma^* \cs {\left( \omega_{\hat{\varrho}} \right)} & = \mathrm{d} \vol + \mathrm{d} \gamma + \boldsymbol{i} \boldsymbol{\pi}^2 s^* \cs {\left( \bar{\omega} \right)}. \label{E:CS-form-simpler}
\end{align}
For closed hyperbolic \( 3 \)-manifolds, this implies that the \( \mathrm{PSL}_2 {\left( \mathbb{C} \right)} \)-Chern-Simons invariant is decomposed into hyperbolic volume and the \( \mathrm{SO}(3) \)-Chern-Simons invariant as follows (Corollary~\ref{C:CS}), which is often called \emph{complex volume}.
\begin{align}
4 \boldsymbol{i} \boldsymbol{\pi}^2 \CS_{\mathrm{PSL}_2 {\left( \mathbb{C} \right)}} {\left( M \right)} & = \Vol(M) + \boldsymbol{i} \boldsymbol{\pi}^2 \CS_{\mathrm{SO}(3)} {\left( M \right)}. \label{E:CS}
\end{align}
For noncompact hyperbolic \( 3 \)-manifolds having finite volume, Yoshida~\cite{MR807069} obtained a well-defined complex-valued quantity corresponding to the complex volume by adding certain length and torsion terms associated with cusps, and showed that it defines a holomorphic function near the original hyperbolic structure in the deformation space of (complete or incomplete) hyperbolic structures. This result also supports the profound relationship between hyperbolic volume and the Chern-Simons invariant.

It is natural to consider the case of noncompact hyperbolic \( 3 \)-manifolds having infinite volume. This requires us to define the \emph{renormalized Chern-Simons invariant} through the aforementioned renormalization procedure, which is the main concern of this paper. This type of problem was also studied in~\cite{MR3159164} and~\cite{MR3228423} in different contexts. The main result and contribution of this paper is to present the explicit asymptotics of the Chern-Simons invariant and to elucidate the geometric meaning of its divergent terms within hyperbolic geometry. Such a step is essential to effectively handle renormalized invariants in practice. This reveals a connection to \emph{Weitzenb{\"o}ck geometry}, which is the central concept of Einstein's teleparallelism in general relativity introduced in~\cite{EinsteinTP}. It turns out that the divergent terms in the asymptotics are completely expressed in terms of the data from the Weitzenb{\"o}ck geometry of hyperbolic ends and the conformal boundary, which makes intensive use of \( 3 \)-dimensional hyperbolic geometry.

The main object of this paper is noncompact hyperbolic \( 3 \)-manifolds having infinite volume, specifically \emph{convex-cocompact} hyperbolic \( 3 \)-manifolds (Definition~\ref{D:convex-cocompact}). In order to renormalize the Chern-Simons invariant, we mainly follow Krasnov and Schlenker's approach of renormalized volume in~\cite{MR2386723}. Unlike the case of volume, however, we inevitably encounter a crucial difficulty: The Chern-Simons invariant essentially depends on the choice of a global smooth section, but there is no canonical choice. Instead of looking for a canonical choice, we suggest a natural convergence condition on the global smooth section to renormalize the Chern-Simons invariant. As a result, we successfully define the renormalized Chern-Simons invariant, which depends on the choice of a global smooth section.

The outline of this paper is as follows. In Section~\ref{S:asymptotics}, we find the asymptotics of the \( \mathrm{SO}(3) \)-Chern-Simons invariant. As noted earlier, the Chern-Simons invariant requires the specific choice of a global smooth section, or a global smooth frame in the case of \( \mathrm{SO}(3) \). We observe in Subsection~\ref{SS:setup} that there is a natural convergence condition on global smooth frame, which introduces the notion of \emph{constant frame} (Definition~\ref{D:constant-frame}). Under this condition, we compute the asymptotics in Subsections~\ref{SS:I}--\ref{SS:infinity}. A pivotal idea in the computation is to construct a \emph{foliation at infinity} associated with each hyperbolic end, which extends the notion of metric at infinity introduced in~\cite{MR2386723} to suit our purpose. We finish Section~\ref{S:asymptotics} by stating the desired asymptotics~\eqref{E:asymptotics-result} without providing a geometric meaning of the exponentially divergent term.

In Section~\ref{S:connections}, for the geometric meaning of the divergent term, we discuss hypersurfaces in Weitzenb{\"o}ck geometry. A \emph{Weitzenb{\"o}ck connection} (Definitions~\ref{D:Weitzenbock-affine} and~\ref{D:Weitzenbock-principal}) is a nonholonomic flat connection induced by a fixed global smooth frame. We find at the end of Section~\ref{S:connections} that the leading coefficient of the asymptotics corresponds to the integral of the \emph{torsion\/ \( 2 \)-form} (Definition~\ref{D:torsion-form}) with respect to the Weitzenb{\"o}ck connection induced by the \emph{smooth frame at infinity}. It is a geometric quantity that is completely determined by the data from infinity. The first result of the present paper is the following.

\begin{pmain} \label{M:A}
Let \( M \) be a convex-cocompact hyperbolic\/ \( 3 \)-manifold with a compact submanifold \( C \) in Definition~\ref{D:convex-cocompact}. For each \( r \in [0, \infty) \), let \( C_r \) be the compact submanifold consisting of the points in \( M \) at distance\/ \( \le r \) from \( C \). Let \( s \) be a global oriented orthonormal smooth frame for \( M \) that is constant along the equidistant foliation\/ \( {\left\{ \partial C_r \right\}}_{r \in [0, \infty)} \) (Definition~\ref{D:constant-frame}). Then the following limit converges.
\begin{align}
\CS_{\mathrm{SO}(3)}^{\textnormal{R}} {\left( M, C, s \right)} & = \lim_{r \to \infty} {\left( \int_{C_r} s^* \cs {\left( \bar{\omega} \right)} + \frac{\boldsymbol{e}^r}{4 \sqrt{2} \boldsymbol{\pi}^2} \int_{\partial C^{\infty}} \tau {\left( s^{\infty} \right)} \right)}. \label{E:A}
\end{align}
Here,\/ \( \cs {\left( \bar{\omega} \right)} \) is the\/ \( \mathrm{SO}(3) \)-Chern-Simons\/ \( 3 \)-form of \( M \), \( \partial C^{\infty} \) denotes the surface \( \partial C \) with the metric at infinity, and \( \tau {\left( s^{\infty} \right)} \) is the torsion\/ \( 2 \)-form of \( \partial C^{\infty} \) with respect to the Weitzenb{\"o}ck connection induced by the smooth frame \( s^{\infty} \) at infinity.
\end{pmain}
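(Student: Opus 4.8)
The plan is to obtain Theorem~\ref{M:A} by combining two facts established in Sections~\ref{S:asymptotics} and~\ref{S:connections}. The first is the asymptotic expansion~\eqref{E:asymptotics-result}, which says $\int_{C_r} s^* \cs {\left( \bar{\omega} \right)} = a \, \boldsymbol{e}^r + b + o(1)$ as $r \to \infty$ for constants $a, b$ depending on $(M, C, s)$, \emph{with no divergent term of intermediate growth} --- in particular no term proportional to $r$, which is the only other potential divergence. The second is the identification $a = - \tfrac{1}{4 \sqrt{2} \, \boldsymbol{\pi}^2} \int_{\partial C^{\infty}} \tau {\left( s^{\infty} \right)}$ carried out at the end of Section~\ref{S:connections}. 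Granting both, the quantity in the parentheses of~\eqref{E:A} equals $b + o(1)$, so the limit exists and equals $b$. Thus the task reduces to proving the expansion and computing $a$, which I would do as follows.

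By Definition~\ref{D:convex-cocompact}, $M \smallsetminus C$ is a finite disjoint union of hyperbolic ends, each diffeomorphic to $\partial_j C \times [0, \infty)$; the equidistant foliation $\{ \partial C_r \}$ puts the metric on such an end in the normal form $\mathrm{d} r^2 + g_r$, where $g_r {\left( \cdot, \cdot \right)} = \I {\left( (\cosh r \, \id + \sinh r \, S) \, \cdot \, , (\cosh r \, \id + \sinh r \, S) \, \cdot \right)}$ and $S$ is the shape operator of $\partial_j C$. Hence the shape operator $S_r$ of $\partial C_r$ satisfies $S_r = \id + O(\boldsymbol{e}^{-2r})$ and $g_r = \tfrac14 \boldsymbol{e}^{2r} g^{\infty} + O(1)$, with $g^{\infty}$ the metric at infinity. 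On each end I would write the $\mathrm{SO}(3)$ Chern--Simons $3$-form in the oriented orthonormal frame adapted to the foliation (unit normal $e_3$; tangential $e_1, e_2$ coming from $s$), so that the Levi--Civita connection forms $\bar{\omega}^1_3, \bar{\omega}^2_3$ restricted to $\partial C_r$ encode $S_r$, the form $\bar{\omega}^1_2$ restricted to $\partial C_r$ is the intrinsic connection of $(\partial C_r, g_r)$, and $\bar{\omega}^1_2 {\left( \partial_r \right)}$ is the rate at which $s$ rotates along the foliation --- the quantity constrained by Definition~\ref{D:constant-frame} --- while $\bar{\omega}^1_3 {\left( \partial_r \right)} = \bar{\omega}^2_3 {\left( \partial_r \right)} = 0$ automatically, since the foliation is geodesic. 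As $\partial C_r$ is a surface, $s^* \cs {\left( \bar{\omega} \right)}$ on the end is $\mathrm{d} r \wedge \alpha_r$ with $\alpha_r$ a $2$-form, so $\int_{C_r \cap (\text{end})} s^* \cs {\left( \bar{\omega} \right)} = \int_0^r f(t) \, \mathrm{d} t$ with $f(t) = \int_{\partial C_t} \alpha_t$.

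Next I would expand $f(t)$ in powers of $\boldsymbol{e}^t$ using $S_t \to \id$ and the equidistant-metric formula. The potential $\boldsymbol{e}^{2t}$ coefficient --- which would otherwise force an $\boldsymbol{e}^{2r}$ divergence, as it does for volume --- cancels: in the quadratic part of the Chern--Simons form because the leading terms of $\bar{\omega}^1_3, \bar{\omega}^2_3$ are proportional to the $g_t$-coframe of $\partial C_t$, so their contributions drop an order, and in the cubic part $\bar{\omega}^1_2 \wedge \bar{\omega}^2_3 \wedge \bar{\omega}^3_1$ because the constant-frame condition forces $\bar{\omega}^1_2 {\left( \partial_r \right)} \to 0$. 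The $\boldsymbol{e}^0$ terms, which would give a contribution linear in $r$ after integrating, assemble into an exact $2$-form on the closed surface $\partial C$ and hence integrate to zero. What survives is $f(t) = a_{\text{end}} \, \boldsymbol{e}^t + O(\boldsymbol{e}^{-\varepsilon t})$ for some $\varepsilon > 0$, whence $\int_0^r f(t) \, \mathrm{d} t = a_{\text{end}} \, \boldsymbol{e}^r + \text{const} + o(1)$; summing over the ends and adding the $r$-independent term $\int_C s^* \cs {\left( \bar{\omega} \right)}$ gives~\eqref{E:asymptotics-result}. The coefficient $a_{\text{end}}$ comes from the cubic term, where $\bar{\omega}^1_2 {\left( \partial_r \right)}$ pairs against the area form $\bar{\omega}^2_3 \wedge \bar{\omega}^3_1 \sim \tfrac14 \boldsymbol{e}^{2t} \, \mathrm{d} A_{g^{\infty}}$; rearranging this integral and passing to the limit, it becomes the integral over $\partial C^{\infty}$ of a $2$-form built from the asymptotic frame and shape data, which is precisely the torsion $2$-form $\tau {\left( s^{\infty} \right)}$ of $\partial C^{\infty}$ for the Weitzenb{\"o}ck connection induced by $s^{\infty}$ (Definitions~\ref{D:Weitzenbock-principal} and~\ref{D:torsion-form}); the numerical factor $4 \sqrt{2} \, \boldsymbol{\pi}^2$ is produced by the $\tfrac{1}{8 \boldsymbol{\pi}^2}$ normalization of the Chern--Simons form together with the $\tfrac14 \boldsymbol{e}^{2t}$ normalization of the metric at infinity.

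The main obstacle is the exponential-order bookkeeping inside the cubic Chern--Simons integrand along each end: sorting the contributions to $f(t)$ by their order in $\boldsymbol{e}^t$, proving that the $\boldsymbol{e}^{2t}$ coefficient vanishes, extracting the $\boldsymbol{e}^t$ coefficient in a form recognizable as a torsion $2$-form, and --- most delicately --- showing that the $\boldsymbol{e}^0$ part contributes nothing after integration over $\partial C$, so that no term linear in $r$ appears (in contrast to the renormalized volume, where Gauss--Bonnet does produce exactly such a topological term). This last point is where the precise formulation of the constant-frame condition in Definition~\ref{D:constant-frame} is essential, and where the rigidity of a hyperbolic end --- constant sectional curvature $-1$ and $S_r \to \id$ exponentially fast --- does the work.
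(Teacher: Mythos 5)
Your high-level plan --- establish an expansion \( \int_{C_r} s^* \cs {\left( \bar{\omega} \right)} = a \, \boldsymbol{e}^r + b + o(1) \) with no intermediate divergence and identify \( a \) with the torsion integral --- is indeed the paper's plan, but the mechanism you propose for producing the expansion has a genuine gap. You carry out the whole computation in a frame adapted to the foliation (tangential \( e_1, e_2 \), normal \( e_3 \)). First, such a frame does not exist globally on \( \partial C_r \) unless every component is a torus (Poincar\'e--Hopf), a point the paper raises explicitly; one is forced to decompose \( \partial C \) into pieces, use a local adapted frame \( \bar{s} \) on each, and invoke the gauge-transformation formula \( s^* \cs {\left( \bar{\omega} \right)} = \bar{s}^* \cs {\left( \bar{\omega} \right)} + \mathrm{d} {\left\langle \Ad_{g^{-1}} \bar{s}^* \bar{\omega} \wedge g^* \mu \right\rangle} - \frac{1}{6} g^* {\left\langle \mu \wedge {\left[ \mu \wedge \mu \right]} \right\rangle} \) with \( s = \bar{s} \cdot g \), which generates boundary integrals over \( \partial C_{\rho} \) and over the cylinders separating the pieces. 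Second, and more seriously, you locate the exponential divergence in the wrong term. The adapted-frame Chern--Simons form collapses to \( \frac{1}{4 \boldsymbol{\pi}^2} K_r {\left( \bar{s}^* \bar{\omega} \right)}^1_2 \wedge \bar{\varepsilon}^1 \wedge \bar{\varepsilon}^2 \), and since \( K_r \, \mathrm{d} a_r = u_r^* {\left( K \, \mathrm{d} a \right)} \) stays bounded along the normal flow, this bulk term contributes at worst a term linear in \( \rho \) (whose coefficient \( P \) the paper shows vanishes pointwise by a computation against the warped product at infinity, not because an exact form integrates to zero) and \emph{no} \( \boldsymbol{e}^{\rho} \) term at all. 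The entire exponential divergence comes from the boundary integral \( \int_{\partial C_{\rho}} {\left\langle \Ad_{g^{-1}} \bar{s}^* \bar{\omega} \wedge g^* \mu \right\rangle} \) of the gauge-transformation exact form, where the components \( \bar{s}^* \bar{\omega}^i_3 \), growing like \( \frac{\boldsymbol{e}^{\rho}}{2} (I - B) \), pair against \( g^* \mu \). This is forced by the shape of the answer: \( \int_{\partial C^{\infty}} \tau {\left( s^{\infty} \right)} \) genuinely depends on the frame \( s \), whereas the adapted-frame cubic term \( \bar{\omega}^1_2 \wedge \bar{\omega}^2_3 \wedge \bar{\omega}^3_1 \) you point to knows nothing about \( s \) and so cannot produce a frame-dependent leading coefficient.

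Relatedly, you misattribute the role of the constant-frame condition: \( {\left( \bar{s}^* \bar{\omega} \right)}^1_2 {\left( \frac{\partial}{\partial r} \right)} \) measures the rotation of the \emph{adapted} frame, which is determined by the geometry; what Definition~\ref{D:constant-frame} controls is the change-of-frame map \( g \), and its actual payoff is that \( g_* \frac{\partial}{\partial r} = 0 \), which kills the term \( \frac{1}{6} g^* {\left\langle \mu \wedge {\left[ \mu \wedge \mu \right]} \right\rangle} \) and tames the cylinder boundary contributions. Two smaller omissions: the absence of a linear-in-\( r \) term requires proving \( P = 0 \) identically (your proposed exactness argument is not the operative mechanism and is unsubstantiated), and the hypothesis only gives constancy on \( [0, \infty) \) while the asymptotic computation needs a collar \( (-\varepsilon, \infty) \), which requires the short comparison argument via Lemma~\ref{L:Weitzenbock} at the end of Section~\ref{S:connections}.
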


\noindent This result defines the \emph{renormalized\/ \( \mathrm{SO}(3) \)-Chern-Simons invariant} of a convex-cocompact hyperbolic \( 3 \)-manifold. Although we do not have a canonical choice of the global smooth frame \( s \) in Theorem~\ref{M:A}, there is a canonical choice of the compact submanifold \( C \): the \emph{Epstein foliation} (Theorem~5.8 of~\cite{MR2386723}). Also, it is worth noting that the invariant depends on the behavior of the global smooth frame only in the hyperbolic ends, not in the whole interior, up to an even integral difference. It satisfies \emph{holographic principle} in this sense, as with the renormalized volume.

In Section~\ref{S:monodromy}, we compute the asymptotics of the \( \mathrm{PSL}_2 {\left( \mathbb{C} \right)} \)-Chern-Simons invariant, using the asymptotics of \( \mathrm{W} \)-volume given in~\cite{MR2386723} and the asymptotics of the \( \mathrm{SO}(3) \)-Chern-Simons invariant given in Theorem~\ref{M:A}\@. A key observation in the computation is that the exact form \( \mathrm{d} \gamma \) appearing in the equality~\eqref{E:CS-form-simpler} between the Chern-Simons \( 3 \)-forms is expressed as the difference between usual mean curvature and Weitzenb{\"o}ck mean curvature (Proposition~\ref{P:exact-form}).

\begin{pmain} \label{M:B}
In the setup of Theorem~\ref{M:A}, let \( \sigma \) be the global smooth section in~\eqref{E:sigma} induced by \( s \). Then the following limit converges.
\begin{align}
\begin{aligned}
& 4 \boldsymbol{i} \boldsymbol{\pi}^2 \CS_{\mathrm{PSL}_2 {\left( \mathbb{C} \right)}}^{\textnormal{R}} {\left( M, C, \sigma \right)} \\
& \qquad = \lim_{r \to \infty} {\left( \begin{aligned}
4 \boldsymbol{i} \boldsymbol{\pi}^2 \int_{C_r} \sigma^* \cs {\left( \omega_{\hat{\varrho}} \right)} & + \frac{\boldsymbol{e}^r}{4 \sqrt{2}} \int_{\partial C^{\infty}} {\left( H {\left( s^{\infty} \right)} \, \mathrm{d} a^{\infty} + \boldsymbol{i} \tau {\left( s^{\infty} \right)} \right)} \\
& + \boldsymbol{\pi} r \chi {\left( \partial_{\infty} M \right)}
\end{aligned} \right)}.
\end{aligned}
\end{align}
Here,\/ \( \cs {\left( \omega_{\hat{\varrho}} \right)} \) is the\/ \( \mathrm{PSL}_2 {\left( \mathbb{C} \right)} \)-Chern-Simons\/ \( 3 \)-form of \( M \), \( H {\left( s^{\infty} \right)} \) is the mean curvature of \( \partial C^{\infty} \) with respect to the Weitzenb{\"o}ck connection induced by the smooth frame \( s^{\infty} \) at infinity, and \( \chi {\left( \partial_{\infty} M \right)} \) is the Euler characteristic of the conformal boundary \( \partial_{\infty} M \) of \( M \).
\end{pmain}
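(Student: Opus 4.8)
The plan is to integrate the identity~\eqref{E:CS-form-simpler} of Chern-Simons \( 3 \)-forms over \( C_r \) and then feed in the three ingredients now available: Theorem~\ref{M:A} for the \( \mathrm{SO}(3) \)-piece, the asymptotics of \( \mathrm{W} \)-volume from~\cite{MR2386723} for the volume piece, and Proposition~\ref{P:exact-form} for the exact form \( \mathrm{d} \gamma \). Since \( s \) is a global smooth frame and \( \sigma \) is the global smooth section~\eqref{E:sigma} it induces, \eqref{E:CS-form-simpler} is an equality of \( 3 \)-forms on \( M \); integrating over the compact manifold-with-boundary \( C_r \), applying Stokes's theorem to \( \mathrm{d} \gamma \), and using \( \int_{C_r} \mathrm{d} \vol = \Vol(C_r) \), we get
\begin{equation*}
4 \boldsymbol{i} \boldsymbol{\pi}^2 \int_{C_r} \sigma^* \cs {\left( \omega_{\hat{\varrho}} \right)} = \Vol(C_r) + \int_{\partial C_r} \gamma + \boldsymbol{i} \boldsymbol{\pi}^2 \int_{C_r} s^* \cs {\left( \bar{\omega} \right)} .
\end{equation*}
By Theorem~\ref{M:A}, the last summand equals \( \boldsymbol{i} \boldsymbol{\pi}^2 \CS_{\mathrm{SO}(3)}^{\textnormal{R}} {\left( M, C, s \right)} - \frac{\boldsymbol{i} \boldsymbol{e}^r}{4 \sqrt{2}} \int_{\partial C^{\infty}} \tau {\left( s^{\infty} \right)} + o(1) \) as \( r \to \infty \), which already produces the \( \boldsymbol{i} \tau {\left( s^{\infty} \right)} \) half of the stated exponential counterterm and the \( \boldsymbol{i} \boldsymbol{\pi}^2 \CS_{\mathrm{SO}(3)}^{\textnormal{R}} \) contribution to the limit.

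Next I would handle \( \Vol(C_r) + \int_{\partial C_r} \gamma \). Writing the \( \mathrm{W} \)-volume as \( \mathrm{W}(C_r) = \Vol(C_r) - \frac{1}{4} \int_{\partial C_r} H \, \mathrm{d} a \), with \( H \) the trace of the shape operator of the leaf, the Riccati equation along the equidistant flow together with the Gauss equation and the Gauss-Bonnet theorem on the closed surface \( \partial C_r \) yields \( \frac{\mathrm{d}}{\mathrm{d} r} \mathrm{W}(C_r) = - \boldsymbol{\pi} \chi {\left( \partial_{\infty} M \right)} \), hence the \( \mathrm{W} \)-volume asymptotics \( \mathrm{W}(C_r) = \mathrm{W}(C) - \boldsymbol{\pi} r \chi {\left( \partial_{\infty} M \right)} \) of~\cite{MR2386723}. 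Therefore
\begin{equation*}
\Vol(C_r) + \int_{\partial C_r} \gamma + \boldsymbol{\pi} r \chi {\left( \partial_{\infty} M \right)} = \mathrm{W}(C) + \frac{1}{4} \int_{\partial C_r} H \, \mathrm{d} a + \int_{\partial C_r} \gamma .
\end{equation*}
By Proposition~\ref{P:exact-form}, the restriction of \( \gamma \) to each leaf is \( \frac{1}{4} {\left( H {\left( s \right)} - H \right)} \mathrm{d} a \), where \( H {\left( s \right)} \) is the Weitzenb{\"o}ck mean curvature, so the right-hand side collapses to \( \mathrm{W}(C) + \frac{1}{4} \int_{\partial C_r} H {\left( s \right)} \mathrm{d} a \); in particular no lower-order or linear-in-\( r \) stray terms survive.

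For this last term I would invoke, from Section~\ref{S:connections} and from~\cite{MR2386723}, the degeneration of the leaf \( \partial C_r \): rescaled by \( \boldsymbol{e}^{-2r} \), its induced metric converges to the metric at infinity on \( \partial C^{\infty} \) and the constant frame \( s \) (Definition~\ref{D:constant-frame}) converges to the frame \( s^{\infty} \) at infinity, while its Weitzenb{\"o}ck mean curvature, unlike the usual one, decays like \( \boldsymbol{e}^{-r} \), so that \( \frac{1}{4} \int_{\partial C_r} H {\left( s \right)} \mathrm{d} a = - \frac{\boldsymbol{e}^r}{4 \sqrt{2}} \int_{\partial C^{\infty}} H {\left( s^{\infty} \right)} \mathrm{d} a^{\infty} + o(1) \). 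Assembling the three ingredients shows that the bracketed expression in the statement converges; writing its value as \( 4 \boldsymbol{i} \boldsymbol{\pi}^2 \CS_{\mathrm{PSL}_2 {\left( \mathbb{C} \right)}}^{\textnormal{R}} {\left( M, C, \sigma \right)} \), one moreover reads off \( 4 \boldsymbol{i} \boldsymbol{\pi}^2 \CS_{\mathrm{PSL}_2 {\left( \mathbb{C} \right)}}^{\textnormal{R}} {\left( M, C, \sigma \right)} = \mathrm{W}(C) + \boldsymbol{i} \boldsymbol{\pi}^2 \CS_{\mathrm{SO}(3)}^{\textnormal{R}} {\left( M, C, s \right)} \), which mirrors~\eqref{E:CS}.

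I expect the decisive obstacle to be exactly this last step, namely the collapse from order \( \boldsymbol{e}^{2r} \) to order \( \boldsymbol{e}^r \): each of \( \frac{1}{4} \int_{\partial C_r} H \, \mathrm{d} a \) and \( \int_{\partial C_r} \gamma \) grows like \( \boldsymbol{e}^{2r} \), and one must show that their \( \boldsymbol{e}^{2r} \)-order parts cancel exactly, equivalently that the Weitzenb{\"o}ck mean curvature of \( \partial C_r \) is \( O {\left( \boldsymbol{e}^{-r} \right)} \) with rescaled limit a fixed multiple of \( H {\left( s^{\infty} \right)} \). This forces one to combine the precise notion of constant frame with the explicit equidistant degeneration of~\cite{MR2386723}, and it is here that the normalizing constant \( \frac{1}{4 \sqrt{2}} \) and the sign get pinned down, by tracking the normalizations of the metric at infinity and of the Weitzenb{\"o}ck torsion \( 2 \)-form under the rescaling. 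Once this is settled, the term \( \boldsymbol{\pi} r \chi {\left( \partial_{\infty} M \right)} \) cancels the linear term of the \( \mathrm{W} \)-volume asymptotics, the exponential counterterms assemble as in the statement, and the holographic-type property remarked after Theorem~\ref{M:A} carries over to \( \CS_{\mathrm{PSL}_2 {\left( \mathbb{C} \right)}}^{\textnormal{R}} \), since modifying \( s \) only on a compact part of the interior changes \( \int_{C_r} \sigma^* \cs {\left( \omega_{\hat{\varrho}} \right)} \) and \( \int_{C_r} s^* \cs {\left( \bar{\omega} \right)} \) by \( r \)-independent amounts lying in the relevant lattices.
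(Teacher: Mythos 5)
Your proposal is correct and follows essentially the same route as the paper: integrate the Chern-Simons form identity over \( C_r \), identify the exact form on each leaf with the difference of the Levi-Civita and Weitzenb{\"o}ck mean curvatures via Proposition~\ref{P:exact-form} so that the volume and Levi-Civita terms assemble into the \( \mathrm{W} \)-volume asymptotics of~\cite{MR2386723}, feed in Theorem~\ref{M:A} for the \( \mathrm{SO}(3) \) part, and extract the \( O {\left( \boldsymbol{e}^{r} \right)} \) asymptotics of \( \int_{\partial C_r} H^s \, \mathrm{d} a_r \). The one step you flag but do not carry out --- that the Weitzenb{\"o}ck mean curvature decays like \( \boldsymbol{e}^{-r} \) with rescaled limit \( H {\left( s^{\infty} \right)} \) --- is exactly where the paper uses Proposition~\ref{P:mean-curvature-torsion} together with the constancy of the components \( N^i = \I_M {\left( E_i, \frac{\partial}{\partial r} \right)} \) along the flow and the identity \( \adj {\left( A_{\rho} \right)} = \frac{\boldsymbol{e}^{\rho}}{\sqrt{2}} \adj {\left( V \right)} + O {\left( \boldsymbol{e}^{-\rho} \right)} \), which pins down the constant \( \frac{1}{4 \sqrt{2}} \) and the sign.
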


\noindent This result defines the \emph{renormalized\/ \( \mathrm{PSL}_2 {\left( \mathbb{C} \right)} \)-Chern-Simons invariant}, or the \emph{renormalized complex volume}, of a convex-cocompact hyperbolic \( 3 \)-manifold. Again, there is no canonical choice of the global smooth frame \( s \) in Theorem~\ref{M:B}, but one can take the Epstein foliation for the compact submanifold \( C \).

The leading coefficient appearing in the asymptotics in Theorem~\ref{M:B} is the integral of an interesting complex-valued quantity
\begin{align}
\boldsymbol{H} & = H + \boldsymbol{i} {\star} \tau
\end{align}
consisting of mean curvature \( H \) and the Hodge dual of torsion \( 2 \)-form \( \tau \) with respect to a Weitzenb{\"o}ck connection. Indeed, this complex-valued quantity is defined on any smooth surface embedded in a (not necessarily hyperbolic) Riemannian \( 3 \)-manifold with a metric-compatible connection, which is commonly referred to as a \emph{Riemann-Cartan\/ \( 3 \)-manifold}. A more focused discussion of this geometric quantity from the general perspective of Riemann-Cartan geometry and Weitzenb{\"o}ck geometry is provided in~\cite{LeeCM}. It exhibits properties similar to those of the usual mean curvature in Riemannian geometry, generalizing a number of well-known results in classical minimal surface theory.

We also obtain the equality of the renormalized version as a direct corollary, which parallels one of the results in~\cite{MR3159164}.

\begin{pmaincor} \label{M:C}
In the setup of Theorem~\ref{M:A}, let \( \sigma \) be the global smooth section in~\eqref{E:sigma} induced by \( s \). Then the following equality holds.
\begin{align}
4 \boldsymbol{i} \boldsymbol{\pi}^2 \CS_{\mathrm{PSL}_2 {\left( \mathbb{C} \right)}}^{\textnormal{R}} {\left( M, C, \sigma \right)} & = \mathrm{W}^{\textnormal{R}} {\left( M, C \right)} + \boldsymbol{i} \boldsymbol{\pi}^2 \CS_{\mathrm{SO}(3)}^{\textnormal{R}} {\left( M, C, s \right)}.
\end{align}
Here,\/ \( \mathrm{W}^{\textnormal{R}} {\left( M, C \right)} \) is the renormalized\/ \( \mathrm{W} \)-volume of \( M \) relative to \( C \).
\end{pmaincor}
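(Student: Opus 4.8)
The plan is to obtain the equality directly from the pointwise identity~\eqref{E:CS-form-simpler} of Proposition~\ref{P:CS-form} by integrating over \( C_r \) and letting \( r \to \infty \), reading off the three renormalized quantities from Theorem~\ref{M:B}, Theorem~\ref{M:A}, and the asymptotics of the \( \mathrm{W} \)-volume in~\cite{MR2386723}, respectively. First I would integrate~\eqref{E:CS-form-simpler} over \( C_r \) and apply Stokes's theorem to the exact summand \( \mathrm{d} \gamma \); since \( \partial C_r \) is a closed surface this yields, for every \( r \in [0, \infty) \),
\begin{align}
4 \boldsymbol{i} \boldsymbol{\pi}^2 \int_{C_r} \sigma^* \cs {\left( \omega_{\hat{\varrho}} \right)} & = \Vol {\left( C_r \right)} + \int_{\partial C_r} \gamma + \boldsymbol{i} \boldsymbol{\pi}^2 \int_{C_r} s^* \cs {\left( \bar{\omega} \right)}. \label{E:C-integrated}
\end{align}
The left-hand side is the integral renormalized in Theorem~\ref{M:B}, and the last term is the one renormalized in Theorem~\ref{M:A}, so everything comes down to recognizing the renormalized \( \mathrm{W} \)-volume inside the ``modified volume'' \( \Vol(C_r) + \int_{\partial C_r} \gamma \).

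For this I would invoke Proposition~\ref{P:exact-form}, which identifies \( \gamma \) restricted to an embedded surface with a fixed multiple of the difference between the usual mean curvature and the Weitzenb{\"o}ck mean curvature. Applied on \( \partial C_r \), and combined with the Krasnov--Schlenker \( \mathrm{W} \)-volume \( \mathrm{W}(C_r) = \Vol(C_r) - \tfrac14 \int_{\partial C_r} H \, \mathrm{d} a \) (normalization of~\cite{MR2386723}), this rewrites \( \Vol(C_r) + \int_{\partial C_r} \gamma \) as \( \mathrm{W}(C_r) \) plus a boundary integral of the Weitzenb{\"o}ck mean curvature of \( \partial C_r \) for the frame \( s \). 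Now I would feed in three asymptotic inputs as \( r \to \infty \): the \( \mathrm{W} \)-volume expansion \( \mathrm{W}(C_r) = \mathrm{W}^{\textnormal{R}}(M, C) - \boldsymbol{\pi} r \chi(\partial_{\infty} M) + o(1) \) from~\cite{MR2386723}; the fact that the Weitzenb{\"o}ck mean curvature integral along the equidistant foliation behaves like \( -\tfrac{\boldsymbol{e}^r}{4 \sqrt{2}} \int_{\partial C^{\infty}} H(s^{\infty}) \, \mathrm{d} a^{\infty} + o(1) \), which is exactly the computation already carried out in Section~\ref{S:monodromy} towards Theorem~\ref{M:B}; and Theorem~\ref{M:A} for \( \int_{C_r} s^* \cs(\bar{\omega}) \).

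Finally I would align these expansions in~\eqref{E:C-integrated}. Subtracting \( \boldsymbol{i} \boldsymbol{\pi}^2 \) times the divergent correction of Theorem~\ref{M:A} and adding the divergent correction of Theorem~\ref{M:B}, the exponentially divergent torsion contributions cancel — here one uses the factor \( 1/\boldsymbol{\pi}^2 \) in the correction of Theorem~\ref{M:A}, since \( \boldsymbol{i} \boldsymbol{\pi}^2 \cdot \tfrac{\boldsymbol{e}^r}{4 \sqrt{2} \boldsymbol{\pi}^2} \int_{\partial C^{\infty}} \tau(s^{\infty}) = \tfrac{\boldsymbol{i} \boldsymbol{e}^r}{4 \sqrt{2}} \int_{\partial C^{\infty}} \tau(s^{\infty}) \) matches the imaginary part of the correction in Theorem~\ref{M:B} — while the surviving divergent terms \( \tfrac{\boldsymbol{e}^r}{4 \sqrt{2}} \int_{\partial C^{\infty}} H(s^{\infty}) \, \mathrm{d} a^{\infty} \) and \( \boldsymbol{\pi} r \chi(\partial_{\infty} M) \) are absorbed by the Weitzenb{\"o}ck mean curvature integral and by the \( \boldsymbol{\pi} r \chi \) term of the \( \mathrm{W} \)-volume expansion. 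The right-hand side then converges to \( \mathrm{W}^{\textnormal{R}}(M, C) + \boldsymbol{i} \boldsymbol{\pi}^2 \CS_{\mathrm{SO}(3)}^{\textnormal{R}}(M, C, s) \) and the left-hand side to \( 4 \boldsymbol{i} \boldsymbol{\pi}^2 \CS_{\mathrm{PSL}_2 {\left( \mathbb{C} \right)}}^{\textnormal{R}}(M, C, \sigma) \), which is the desired identity. The only delicate point I anticipate is this bookkeeping of constants in the divergent terms — in particular the asymptotic cancellation between the Weitzenb{\"o}ck mean curvature term produced by \( \gamma \) and the mean curvature term of Theorem~\ref{M:B} — but this is not an independent estimate: it is forced by the exactness of~\eqref{E:CS-form-simpler} and by the fact that Theorems~\ref{M:A} and~\ref{M:B} are themselves proved by integrating this same identity over \( C_r \). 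Hence I expect the corollary to be genuinely immediate once the three expansions are placed side by side.
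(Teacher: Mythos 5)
Your proposal is correct and follows essentially the same route as the paper: the corollary is read off from the integrated identity \( 4 \boldsymbol{i} \boldsymbol{\pi}^2 \int_{C_r} \sigma^* \cs {\left( \omega_{\hat{\varrho}} \right)} = \Vol {\left( C_r \right)} + \frac{1}{4} \int_{\partial C_r} {\left( \bar{H} - H^s \right)} \mathrm{d} a_r + \boldsymbol{i} \boldsymbol{\pi}^2 \int_{C_r} s^* \cs {\left( \bar{\omega} \right)} \) coming from Propositions~\ref{P:CS-form} and~\ref{P:exact-form}, combined with the Krasnov--Schlenker \( \mathrm{W} \)-volume expansion, the Weitzenb{\"o}ck mean-curvature asymptotics of Section~\ref{S:monodromy}, and Theorem~\ref{M:A}, exactly as in the paper's proof of Theorem~\ref{M:B}. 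The one point to fix is your parenthetical \( \mathrm{W} {\left( C_r \right)} = \Vol {\left( C_r \right)} - \frac{1}{4} \int H \, \mathrm{d} a \): with the paper's outward orientation the sign is \( + \frac{1}{4} \), which is precisely what makes \( \Vol {\left( C_r \right)} + \int_{\partial C_r} \gamma \) equal \( \mathrm{W} {\left( C_r \right)} - \frac{1}{4} \int_{\partial C_r} H^s \, \mathrm{d} a_r \).
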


\noindent In Corollary~\ref{M:C}, the \emph{renormalized\/ \( \mathrm{W} \)-volume} is a geometric quantity introduced by Krasnov and Schlenker~\cite{MR2386723}, which satisfies
\begin{align}
\mathrm{W}^{\textnormal{R}} {\left( M, C \right)} & = \Vol^{\textnormal{R}} {\left( M, C \right)} - \frac{\boldsymbol{\pi}}{2} \chi {\left( \partial_{\infty} M \right)},
\end{align}
where \( \Vol^{\textnormal{R}} {\left( M, C \right)} \) is the renormalized volume of \( M \) relative to \( C \) (Lemma~4.5 of~\cite{MR2386723}). The difference from the renormalized volume is a topological constant proportional to the Euler characteristic of the conformal boundary. Compared to~\eqref{E:CS}, the renormalized \( \mathrm{W} \)-volume plays the role of the real part in the case of noncompact hyperbolic \( 3 \)-manifolds having infinite volume. This also agrees with an invariant defined in~\cite{MR3228423} in a different context.

Corollary~\ref{M:C} implies that the renormalized \( \mathrm{PSL}_2 {\left( \mathbb{C} \right)} \)-Chern-Simons invariant depends on the behavior of the global smooth section only at infinity, up to \( 2 \boldsymbol{i} \boldsymbol{\pi}^2 \mathbb{Z} \). In this sense, it satisfies the holographic principle as well. Meanwhile, as an imaginary counterpart to the renormalized \( \mathrm{W} \)-volume, one might try avoiding the difficulty of choosing a global smooth section by employing the variational formula of renormalized volume. However, this attempt fails, as the renormalized volume is known to become a K{\"a}hler potential for the Weil-Petersson metric. A more detailed explanation is provided in Remark~\ref{R:variation}.

\subsubsection*{Notations and conventions}

All orientations in this paper obey the right-hand rule. We say a smooth hypersurface \( S \) embedded in an oriented Riemannian manifold \( M \) is oriented by a unit normal vector field \( N \) along \( S \) iff the orientation of \( S \) is given as follows: A frame \( {\left( E_1, \dotsc, E_{n-1} \right)} \) for \( S \) is oriented iff the frame \( {\left( N, E_1, \dotsc, E_{n-1} \right)} \) for \( M \) is oriented. We orient the boundary of a smooth manifold by an outward-pointing vector field (i.e., the Stokes orientation).

We will often make use of the Einstein summation convention to achieve brevity: An index variable appearing twice in a single term (once in an upper position and once in a lower position) assumes the summation of the term over the index. For example, \( A^i B_i = \sum_i A^i B_i \). The ``cyclic'' summation is denoted as follows.
\begin{align}
\sum_{\mathrm{cyc}} A_{ijk} & = \sum_{\substack{i, j, k \\ \mathrm{cyc}}} A_{ijk} = A_{123} + A_{231} + A_{312}.
\end{align}
The \( (i,j) \)-entry of a matrix or a matrix-valued quantity \( A \) is denoted by \( A^i_j \). A \emph{connection} means either a Koszul connection (denoted by \( \del \)) or a principal connection (denoted by \( \omega \)), depending on the context. Quantities related to the Levi-Civita connection are denoted by a bar for the reader's convenience. For example, \( \bar{\del} \), \( \bar{\omega} \), \( \bar{H} \), etc. However, this is irrelevant to the notation \( \bar{s} \) of smooth frame with a bar.

\section{Asymptotics of metric Chern-Simons invariant} \label{S:asymptotics}

\subsection{Setup} \label{SS:setup}

We start with some preliminaries and a description of the problem. The goal of this section is to find the asymptotics of the \( \mathrm{SO}(3) \)-Chern-Simons invariant. We first recall the definition and basic properties of the Chern-Simons invariant, which are originated in~\cite{MR0353327}. A gentle introduction to \( 3 \)-dimensional Chern-Simons theory can be found in~\cite{BaseilhacCS} and~\cite{MR1337109}.

\begin{pdef} \label{D:CS}
Let \( G \) be a Lie group and \( \mathfrak{g} \) be its Lie algebra. Suppose that we are given an \( \Ad \)-invariant\footnote{\( {\left\langle \Ad_g a, \Ad_g b \right\rangle} = {\left\langle a, b \right\rangle} \) for all \( g \in G \) and \( a, b \in \mathfrak{g} \).} symmetric bilinear form \( {\left\langle {}\cdot{}, {}\cdot{} \right\rangle} \colon \mathfrak{g} \times \mathfrak{g} \to \mathbb{C} \). Let \( \pi \colon P \surjto M \) be a smooth principal \( G \)-bundle with a connection \( \omega \in \Omega^1 {\left( P, \mathfrak{g} \right)} \). The \emph{Chern-Simons\/ \( 3 \)-form} is \( \cs {\left( \omega \right)} \in \Omega^3 {\left( P, \mathbb{C} \right)} \) defined by
\begin{align}
\cs {\left( \omega \right)} & = {\left\langle \omega \wedge \Omega \right\rangle} - \frac{1}{6} {\left\langle \omega \wedge {\left[ \omega \wedge \omega \right]} \right\rangle},
\end{align}
where \( \Omega = \mathrm{d} \omega + \frac{1}{2} {\left[ \omega \wedge \omega \right]} \in \Omega^2 {\left( P, \mathfrak{g} \right)} \) is the curvature of \( \omega \). In particular, if \( M \) is a compact oriented smooth \( 3 \)-manifold with or without boundary, and if there is a global smooth section \( \sigma \colon M \injto P \), the \emph{Chern-Simons invariant} is defined by
\begin{align}
\CS_G {\left( M, \omega, \sigma \right)} & = \int_M \sigma^* \cs {\left( \omega \right)} \in \mathbb{C},
\end{align}
which depends on the choice of the global smooth section \( \sigma \).
\end{pdef}

\noindent If \( G \) is a Lie group of matrices, the standard choice of the \( \Ad \)-invariant symmetric bilinear form \( {\left\langle {}\cdot{}, {}\cdot{} \right\rangle} \colon \mathfrak{g} \times \mathfrak{g} \to \mathbb{C} \) in Definition~\ref{D:CS} is
\begin{align}
{\left\langle a, b \right\rangle} & = - \frac{1}{8 \boldsymbol{\pi}^2} \tr \! {\left( a b \right)} \qquad (a, b \in \mathfrak{g}), \label{E:Ad-invariant-form}
\end{align}
which is what we follow throughout the paper.

Some basic properties of the Chern-Simons \( 3 \)-form are shown in the following. See, e.g., Proposition~3.2 of~\cite{BaseilhacCS} and Proposition~1.27 of~\cite{MR1337109}.

\begin{pprop} \label{P:CS}
Let \( G \) be a Lie group. Let \( \pi \colon P \surjto M \) be a smooth principal \( G \)-bundle with a connection \( \omega \). Then the following two hold.
\begin{enumerate}
\item \( \mathrm{d} \cs {\left( \omega \right)} = {\left\langle \Omega \wedge \Omega \right\rangle} \), where\/ \( \Omega \) is the curvature of \( \omega \).
\item If \( \varphi \colon P \to P \) is a gauge transformation with the associated map \( g_{\varphi} \colon P \to G \) given by \( \varphi(p) = p \cdot g_{\varphi}(p) \) for all \( p \in P \), then
\begin{align}
\varphi^* \cs {\left( \omega \right)} & = \cs {\left( \varphi^* \omega \right)} = \cs {\left( \omega \right)} + \mathrm{d} {\left\langle \Ad_{g_{\varphi}^{-1}} \omega \wedge g_{\varphi}^* \mu \right\rangle} - \frac{1}{6} g_{\varphi}^* {\left\langle \mu \wedge {\left[ \mu \wedge \mu \right]} \right\rangle},
\end{align}
where \( \mu \) is the Maurer-Cartan\/ \( 1 \)-form on \( G \).
\end{enumerate}
\end{pprop}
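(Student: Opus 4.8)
\emph{Proof proposal.}
Both statements are formal identities in the graded algebra \( \Omega^{\bullet}(P,\mathfrak{g}) \), and the plan is to establish them by direct computation using three ingredients: the structure equation \( \Omega = \mathrm{d}\omega + \frac{1}{2}[\omega\wedge\omega] \), the graded Leibniz rule for \( \mathrm{d} \), and the pointwise \( \Ad \)-invariance of \( \langle\,\cdot\,,\cdot\,\rangle \), equivalently its infinitesimal form \( \langle[a,b],c\rangle = \langle a,[b,c]\rangle \), which makes \( \langle[\,\cdot\,,\cdot\,],\cdot\,\rangle \) totally antisymmetric. It is convenient to record at the outset the equivalent expression
\[
\cs(\omega) = \langle\omega\wedge\mathrm{d}\omega\rangle + \frac{1}{3}\langle\omega\wedge[\omega\wedge\omega]\rangle,
\]
obtained by inserting the structure equation into \( \langle\omega\wedge\Omega\rangle \), together with the standard fact that \( [\omega\wedge[\omega\wedge\omega]] = 0 \) for a \( \mathfrak{g} \)-valued \( 1 \)-form \( \omega \), which is the Jacobi identity read off the totally antisymmetric wedge of three copies of \( \omega \).

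For part (a), I would apply \( \mathrm{d} \) to the displayed expression. The Leibniz rule gives \( \mathrm{d}\langle\omega\wedge\mathrm{d}\omega\rangle = \langle\mathrm{d}\omega\wedge\mathrm{d}\omega\rangle \); differentiating the cubic term and using \( \mathrm{d}[\omega\wedge\omega] = -2[\omega\wedge\mathrm{d}\omega] \) together with the graded symmetry of \( \langle\,\cdot\,\wedge\,\cdot\,\rangle \) and infinitesimal invariance collapses all cubic contributions into \( \langle\mathrm{d}\omega\wedge[\omega\wedge\omega]\rangle \), so that \( \mathrm{d}\cs(\omega) = \langle\mathrm{d}\omega\wedge\mathrm{d}\omega\rangle + \langle\mathrm{d}\omega\wedge[\omega\wedge\omega]\rangle \). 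On the other hand, expanding \( \langle\Omega\wedge\Omega\rangle \) via the structure equation produces exactly these two terms plus \( \frac{1}{4}\langle[\omega\wedge\omega]\wedge[\omega\wedge\omega]\rangle \), and this last term vanishes: one application of invariance rewrites it as a multiple of \( \langle\omega\wedge[\omega\wedge[\omega\wedge\omega]]\rangle \), which is zero. Comparing the two computations proves (a).

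For part (b), the first equality \( \varphi^{*}\cs(\omega) = \cs(\varphi^{*}\omega) \) is a naturality statement: \( \cs \) is assembled from \( \omega \) and \( \Omega \) using only \( \mathrm{d} \), \( \wedge \), \( [\,\cdot\,,\cdot\,] \) and \( \langle\,\cdot\,,\cdot\,\rangle \), all of which commute with pullback, and the curvature of \( \varphi^{*}\omega \) equals \( \varphi^{*}\Omega \). For the second equality I would substitute the transformation law \( \varphi^{*}\omega = \Ad_{g_{\varphi}^{-1}}\omega + g_{\varphi}^{*}\mu \) into the displayed formula for \( \cs(\varphi^{*}\omega) \) and expand, sorting the resulting terms by the number of factors of \( g_{\varphi}^{*}\mu \) they contain. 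The terms with no \( g_{\varphi}^{*}\mu \) reproduce \( \cs(\omega) \) after applying pointwise invariance \( \langle\Ad_{g_{\varphi}^{-1}}a\wedge\Ad_{g_{\varphi}^{-1}}b\rangle = \langle a\wedge b\rangle \); the terms with three factors collapse, via the Maurer--Cartan equation \( \mathrm{d}(g_{\varphi}^{*}\mu) = -\frac{1}{2}[g_{\varphi}^{*}\mu\wedge g_{\varphi}^{*}\mu] \), to \( -\frac{1}{6}g_{\varphi}^{*}\langle\mu\wedge[\mu\wedge\mu]\rangle \); and the mixed terms, after using the derivative-of-\( \Ad \) identity \( \mathrm{d}(\Ad_{g_{\varphi}^{-1}}\alpha) = \Ad_{g_{\varphi}^{-1}}\mathrm{d}\alpha - [g_{\varphi}^{*}\mu\wedge\Ad_{g_{\varphi}^{-1}}\alpha] \), the Maurer--Cartan equation, and invariance once more, reassemble into \( \mathrm{d}\langle\Ad_{g_{\varphi}^{-1}}\omega\wedge g_{\varphi}^{*}\mu\rangle \). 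This yields the claimed formula.

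The routine parts are the Leibniz expansions; the step that genuinely needs care is the sign bookkeeping for the graded brackets, and in part (b) confirming that the mixed terms assemble \emph{precisely} as the stated exact form rather than that form plus an undetermined closed remainder. A useful sanity check along the way is that, by part (a) and the tensoriality \( \varphi^{*}\Omega = \Ad_{g_{\varphi}^{-1}}\Omega \), the difference \( \varphi^{*}\cs(\omega) - \cs(\omega) \) is automatically closed, which is consistent with the stated answer, the \( \mu \)-cubic term being the pullback of the bi-invariant Cartan \( 3 \)-form on \( G \).
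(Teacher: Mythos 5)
Your proposal is correct, and it is essentially the standard direct computation: the paper itself offers no proof of this proposition, deferring to Proposition~3.2 of Baseilhac's notes and Proposition~1.27 of Freed's \emph{Classical Chern-Simons Theory.~I}, both of which argue exactly as you do (rewrite \( \cs(\omega) = \langle\omega\wedge\mathrm{d}\omega\rangle + \tfrac{1}{3}\langle\omega\wedge[\omega\wedge\omega]\rangle \), use the graded Leibniz rule, \( \Ad \)-invariance, the Jacobi identity to kill \( \langle[\omega\wedge\omega]\wedge[\omega\wedge\omega]\rangle \), the transformation law \( \varphi^{*}\omega = \Ad_{g_{\varphi}^{-1}}\omega + g_{\varphi}^{*}\mu \), and the Maurer--Cartan equation). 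Your sign bookkeeping for \( \mathrm{d}[\omega\wedge\omega] = -2[\omega\wedge\mathrm{d}\omega] \) and the derivative-of-\( \Ad \) identity is right, so no gap remains.
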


If \( M \) is a compact oriented Riemannian \( 3 \)-manifold with or without boundary, the oriented orthonormal frame bundle \( FM \surjto M \) is a smooth principal \( \mathrm{SO}(3) \)-bundle with the Levi-Civita connection \( \bar{\omega} \in \Omega^1 {\left( FM, \mathfrak{so}(3) \right)} \). The corresponding Chern-Simons invariant is called the \emph{\( \mathrm{SO}(3) \)-Chern-Simons invariant} or the \emph{metric Chern-Simons invariant}. For closed oriented Riemannian \( 3 \)-manifolds, it is well known that this invariant does not depend on the choice of a global smooth section up to an even integral difference as follows. See Section~6 of~\cite{MR0353327} for more details.

\begin{pprop} \label{P:metric-CS}
Let \( M \) be a closed oriented Riemannian\/ \( 3 \)-manifold. If \( s \) and \( s' \) are two global oriented orthonormal smooth frames for \( M \), then
\begin{align}
& \CS_{\mathrm{SO}(3)} {\left( M, s' \right)} - \CS_{\mathrm{SO}(3)} {\left( M, s \right)} \in 2 \mathbb{Z}.
\end{align}
Since \( M \) is parallelizable,\/ \( \CS_{\mathrm{SO}(3)} (M) \in \mathbb{R} / 2 \mathbb{Z} \) is well defined.
\end{pprop}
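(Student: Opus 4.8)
The strategy is to convert the difference of Chern--Simons invariants into a single integral over the group \( \mathrm{SO}(3) \), and then to evaluate that integral by topological means. Since \( M \) is an orientable closed \( 3 \)-manifold it is parallelizable (a classical fact of Stiefel), so global oriented orthonormal smooth frames exist; any two of them, \( s \) and \( s' \), are related by a unique smooth map \( g \colon M \to \mathrm{SO}(3) \) with \( s'(x) = s(x) \cdot g(x) \) for the right action on \( FM \). This change of frame is realized by the gauge transformation \( \varphi \colon FM \to FM \) determined by \( \varphi \circ s = s' \), whose associated map satisfies \( g_{\varphi} \circ s = g \). Applying Proposition~\ref{P:CS}(b) to \( \bar{\omega} \) and pulling back along \( s \), we obtain
\[
(s')^* \cs {\left( \bar{\omega} \right)} = s^* \cs {\left( \bar{\omega} \right)} + \mathrm{d} {\left( s^* {\left\langle \Ad_{g_{\varphi}^{-1}} \bar{\omega} \wedge g_{\varphi}^* \mu \right\rangle} \right)} - \frac{1}{6} \, g^* {\left\langle \mu \wedge {\left[ \mu \wedge \mu \right]} \right\rangle},
\]
where \( \mu \) is the Maurer--Cartan \( 1 \)-form on \( \mathrm{SO}(3) \). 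Integrating over the closed manifold \( M \), the exact term vanishes by Stokes' theorem, leaving
\[
\CS_{\mathrm{SO}(3)} {\left( M, s' \right)} - \CS_{\mathrm{SO}(3)} {\left( M, s \right)} = \int_M g^* W, \qquad W \coloneqq - \frac{1}{6} {\left\langle \mu \wedge {\left[ \mu \wedge \mu \right]} \right\rangle} \in \Omega^3 {\left( \mathrm{SO}(3) \right)}.
\]

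Next I would identify \( \int_M g^* W \) topologically. The bi-invariant form \( W \) is closed (by Proposition~\ref{P:CS}(a), since the curvature of the Maurer--Cartan form vanishes, so \( \mathrm{d} W = \mathrm{d} \cs {\left( \mu \right)} = 0 \)), so it has a de Rham class \( [W] \in H^3 {\left( \mathrm{SO}(3); \mathbb{R} \right)} \cong \mathbb{R} \). By Poincar{\'e} duality for the closed oriented \( 3 \)-manifold \( \mathrm{SO}(3) \cong \mathbb{RP}^3 \), there is an integral class \( \lambda \in H^3 {\left( \mathrm{SO}(3); \mathbb{Z} \right)} \) pairing to \( 1 \) with the fundamental class, and hence \( [W] = N \lambda \) in real cohomology with \( N = \int_{\mathrm{SO}(3)} W \). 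Since \( M \) is closed and oriented, naturality of pullback and the pairing with \( [M] \) give \( \int_M g^* W = N {\left\langle g^* \lambda, [M] \right\rangle} \in N \mathbb{Z} \). It therefore suffices to show that \( N \) is even; in fact \( \lvert N \rvert = 2 \).

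To compute \( N \) I would pass to the universal covering \( p \colon \mathrm{SU}(2) \to \mathrm{SO}(3) \), a degree-two orientation-preserving covering whose differential identifies \( \mathfrak{su}(2) \) with \( \mathfrak{so}(3) \) via \( X \mapsto \ad X \). A one-line computation in a basis \( \{ e_i \} \) with \( [e_i, e_j] = \epsilon_{ijk} e_k \) shows \( \tr {\left( \ad X \cdot \ad Y \right)} = 4 \tr_{\mathrm{fund}} {\left( X Y \right)} \), the ratio of the adjoint and fundamental trace forms of \( \mathfrak{su}(2) \). With the normalization~\eqref{E:Ad-invariant-form}, together with the fact that \( p \) is a Lie group homomorphism (so \( p^* \mu_{\mathrm{SO}(3)} = \ad \circ \mu_{\mathrm{SU}(2)} \)), this yields \( p^* W = 4 W_{\mathrm{SU}(2)} \), where \( W_{\mathrm{SU}(2)} \) is the analogous \( 3 \)-form on \( \mathrm{SU}(2) \) built from its \( 2 \)-dimensional defining representation. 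Since \( W_{\mathrm{SU}(2)} = \tfrac{1}{24 \boldsymbol{\pi}^2} \tr {\left( {\left( g^{-1} \mathrm{d} g \right)}^3 \right)} \) generates \( H^3 {\left( \mathrm{SU}(2); \mathbb{Z} \right)} \) under \( \mathrm{SU}(2) \cong S^3 \), we have \( \int_{\mathrm{SU}(2)} W_{\mathrm{SU}(2)} = \pm 1 \); and because \( p \) has degree \( 2 \),
\[
2 \int_{\mathrm{SO}(3)} W = \int_{\mathrm{SU}(2)} p^* W = 4 \int_{\mathrm{SU}(2)} W_{\mathrm{SU}(2)} = \pm 4.
\]
Hence \( N = \pm 2 \), so the difference lies in \( 2 \mathbb{Z} \); parallelizability of \( M \) then makes \( \CS_{\mathrm{SO}(3)}(M) \in \mathbb{R} / 2 \mathbb{Z} \) well defined.

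The only step requiring genuine computation is the evaluation of \( N \): verifying the factor \( 4 \) relating the adjoint and fundamental trace forms of \( \mathfrak{su}(2) \), and pinning down the classical normalization \( \int_{\mathrm{SU}(2)} W_{\mathrm{SU}(2)} = \pm 1 \) (equivalently, that \( \tfrac{1}{24 \boldsymbol{\pi}^2} \tr {\left( \theta^3 \right)} \) generates \( H^3(\mathrm{SU}(2); \mathbb{Z}) \)). This is precisely where the \emph{even} integer — rather than merely an integer — comes from, and it is sensitive to the choice~\eqref{E:Ad-invariant-form} of the \( \Ad \)-invariant form: a different normalization would rescale the ambiguity group accordingly. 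Everything else — the gauge-transformation formula, Stokes' theorem, and the one-dimensionality of \( H^3(\mathrm{SO}(3); \mathbb{R}) \) — is formal.
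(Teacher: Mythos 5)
Your proof is correct and follows essentially the same route as the paper: apply the gauge-transformation formula of Proposition~\ref{P:CS}(b), kill the exact term by Stokes' theorem, and identify the remaining Wess--Zumino term as \( 2 \deg g \). The paper simply asserts that \( -\frac{1}{6} \int_M g^* {\left\langle \mu \wedge {\left[ \mu \wedge \mu \right]} \right\rangle} = 2 \deg g \) (deferring to Section~6 of Chern--Simons), whereas you supply the verification of the normalization constant via the double cover \( \mathrm{SU}(2) \to \mathrm{SO}(3) \) and the ratio of trace forms; that computation is correct.
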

\begin{proof}
By Proposition~\ref{P:CS}, the difference equals \( - \frac{1}{6} \int_M g^* {\left\langle \mu \wedge {\left[ \mu \wedge \mu \right]} \right\rangle} = 2 \deg g \), where \( g \colon M \to \mathrm{SO}(3) \) is the smooth map given by the change-of-basis matrix from \( s \) to \( s' \) (i.e., \( s' = s \cdot g \)) and \( \mu \) is the Maurer-Cartan \( 1 \)-form on \( \mathrm{SO}(3) \).
\end{proof}

\noindent However, this is no longer true for compact oriented Riemannian \( 3 \)-manifolds with boundary. Even up to an even integral difference, the invariant depends significantly on the boundary behavior of the global smooth section.

Our main interest is convex-cocompact hyperbolic \( 3 \)-manifolds with at least one hyperbolic end. One characterization is the following (Definition~4.1 of~\cite{MR2386723}).

\begin{pdef} \label{D:convex-cocompact}
A connected oriented complete hyperbolic \( 3 \)-manifold \( M \) without boundary is said to be \emph{convex-cocompact} iff there exists a compact embedded submanifold \( C \subseteq M \) with convex\footnote{The second fundamental form is negative semidefinite.} boundary \( \partial C \) such that the normal exponential map from \( \partial C \) to the conformal boundary \( \partial_{\infty} M \) is a diffeomorphism. Each connected component of the complement \( M \setminus C \) is called a \emph{hyperbolic end} of \( M \).
\end{pdef}

\noindent Typical examples of convex-cocompact hyperbolic \( 3 \)-manifolds include Schottky \( 3 \)-manifolds (having one hyperbolic end) and quasi-Fuchsian \( 3 \)-manifolds (having two hyperbolic ends). See Chapters~2--3 of~\cite{MR3586015} for more details.

Throughout Section~\ref{S:asymptotics}, we consider a convex-cocompact hyperbolic \( 3 \)-manifold \( M \) with a compact submanifold \( C \) in Definition~\ref{D:convex-cocompact}. For each \( r \in \mathbb{R} \), we define a subset \( S_r \subseteq M \) as follows. Let \( S_0 = S = \partial C \). For each \( r > 0 \), let \( S_r \) be the set of all points in \( M \setminus C \) at distance \( r \) from \( S \). For each \( r < 0 \), let \( S_r \) be the set of all points in \( C \) at distance \( -r \) from \( S \). Lemma~\ref{L:foliation} below describes the geometry of each \( S_r \). This can be found in Lemma~2.7 of~\cite{MR2328927} and Lemma~2.2 of~\cite{MR2386723}.

\begin{plem} \label{L:foliation}
There is \( \varepsilon_0 > 0 \) such that the following hold for every \( r \in (-\varepsilon_0, \infty) \).
\begin{enumerate}
\item \( S_r \) is a closed convex smooth surface embedded in \( M \), which is diffeomorphic to the conformal boundary \( \partial_{\infty} M \). We orient \( S_r \) by\/ \( \frac{\partial}{\partial r} \).
\item The closest-point projection \( u_r \colon S_r \to S \) is an orientation-preserving diffeomorphism (but not necessarily an isometry).
\item Let\/ \( \I_r \) and \( B_r \) be the first fundamental form (i.e., the induced metric) and the Weingarten map (i.e., the shape operator) on \( S_r \) respectively. Then
\begin{align}
\begin{aligned}
\I_r {\left( X, Y \right)} & = \I {\left( {\left( \cosh(r) I - \sinh(r) B \right)} {\left( u_r \right)}_* X, {\left( \cosh(r) I - \sinh(r) B \right)} {\left( u_r \right)}_* Y \right)}, \\
B_r & = - {\left( u_r^{-1} \right)}_* {\left( \cosh(r) I - \sinh(r) B \right)}^{-1} {\left( \sinh(r) I - \cosh(r) B \right)} {\left( u_r \right)}_*
\end{aligned}
\end{align}
for all \( X, Y \in \Gamma {\left( T S_r \right)} \), where\/ \( \I = \I_0 \), \( B = B_0 \), and \( I \) is the identity operator.
\end{enumerate}
\end{plem}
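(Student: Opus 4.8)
\emph{Overview.} I would prove everything from the normal exponential map of the convex boundary $S=\partial C$, reading off (a)--(c) from the behaviour of Jacobi fields along the normal geodesics in curvature $-1$. Let $N$ be the outward unit normal field along $S$, set $E(p,r)=\exp_p(rN_p)$, and write $\gamma_p(r)=E(p,r)$ for the normal geodesic.

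\emph{Step 1: the normal exponential map and the set-theoretic claims.} The defining property of convex-cocompactness is exactly that the restriction of $E$ to $S\times[0,\infty)$ is a diffeomorphism, and the normal exponential map from $\partial C$ realises a diffeomorphism $\partial C\iso\partial_{\infty}M$. By compactness of $S$ and the inverse function theorem this extends to a diffeomorphism of $S\times(-\varepsilon_0,\infty)$ onto an open subset of $M$ for some $\varepsilon_0>0$, since no focal point has yet appeared on the short inward side. Because $S$ is convex and $\mathrm{sec}\equiv-1$, a segment $\gamma_p|_{[0,r]}$ encounters no focal point and is the unique minimizing path from $\gamma_p(r)$ to $S$; hence on $\overline{M\setminus C}$ the function $\dist(\cdot,S)$ is literally the $r$-coordinate, so $S_r=E(S\times\{r\})$ is a compact smooth surface embedded in $M$, diffeomorphic to $S$ (hence to $\partial_{\infty}M$) via the closest-point projection $u_r=\mathrm{pr}_S\circ(E|_{S\times\{r\}})^{-1}$. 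For $r\in(-\varepsilon_0,0)$ the same descriptions hold after possibly shrinking $\varepsilon_0$. This gives the set-theoretic content of (a) and (b); I would cite Lemma~2.7 of~\cite{MR2328927} and Lemma~2.2 of~\cite{MR2386723} for the standard parts.

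\emph{Step 2: Jacobi fields and the formulas (c).} Fix $p$ and $X\in T_pS$; the variation field $J_X(r)=\mathrm{d}E_{(p,r)}(X)$ is the Jacobi field along $\gamma_p$ with $J_X(0)=X$ and $J_X'(0)=\bar\del_X N=-BX$, the last equality pinning the sign convention consistent with $B_0=B$. Since $\langle J_X,\gamma_p'\rangle$ vanishes identically and $\mathrm{sec}\equiv-1$, the Jacobi equation becomes $J_X''=J_X$, so, writing $P_r$ for parallel transport along $\gamma_p$ from $0$ to $r$,
\begin{align*}
J_X(r) & = P_r \bigl( (\cosh r)\, I - (\sinh r)\, B \bigr) X .
\end{align*}
Since $(u_r)_*$ is by construction the inverse of $X\mapsto J_X(r)\colon T_pS\to T_{\gamma_p(r)}S_r$ and $P_r$ is an isometry, pairing $J_X(r)$ with $J_Y(r)$ yields the stated formula for $\I_r$. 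For the shape operator $B_r$ of $S_r$ with unit normal $\partial/\partial r=\gamma_p'$, one has $B_r(J_X(r))=-\bar\del_{J_X}\partial_r=-\bar\del_{\partial_r}J_X=-J_X'(r)=-P_r\bigl((\sinh r)I-(\cosh r)B\bigr)X$, using that the variation field commutes with $\partial/\partial r$ and $\gamma_p$ is a geodesic; rewriting $X$ through $(u_r)_*$ and $(u_r^{-1})_*=P_r(\cosh r\, I-\sinh r\, B)$, and using that all operators involved are power series in $B$ and therefore commute, gives the stated formula for $B_r$ (the middle factor $(\cosh r\, I-\sinh r\, B)^{-1}(\cosh r\, I-\sinh r\, B)$ cancels).

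\emph{Step 3: signs, orientations, convexity, and the main obstacle.} All operators above are analytic functions of the symmetric operator $B$, so simultaneous diagonalization is available. The determinant $\prod(\cosh r-\sinh r\,\beta)$ over the principal curvatures $\beta\le 0$ of $S$ is positive for $r>-\varepsilon_0$, so $\mathrm{d}E_{(p,r)}|_{T_pS}$ preserves orientation relative to the orientations of $S$ by $N$ and of $S_r$ by $\partial/\partial r$; hence $u_r$ is orientation-preserving, finishing (b). For convexity in (a), the $\beta$-eigenvalue of $B_r$ equals $(\cosh r\cdot\beta-\sinh r)/(\cosh r-\sinh r\cdot\beta)$, whose numerator is $\le 0$ and denominator is $\ge\cosh r>0$ for $r\ge 0$, so $B_r\le 0$; for $r\in(-\varepsilon_0,0)$ one shrinks $\varepsilon_0$, which is automatic if $\partial C$ is strictly convex (one may always take $C$ slightly larger than the convex core to arrange this), the content there being only smoothness. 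The genuinely substantive step is Step~1 --- that $E$ is a global diffeomorphism onto $\overline{M\setminus C}$ and that along it $\dist(\cdot,S)$ is exactly the last coordinate, so that normal geodesics neither focus nor cross. This is where convex-cocompactness does real work, together with the absence of focal points on the convex side of a surface in curvature $-1$ (visible a posteriori from $\cosh r\, I-\sinh r\, B$ being invertible for every $r\ge 0$); granting it, Steps~2--3 are routine bookkeeping with a linear second-order ODE.
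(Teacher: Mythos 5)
The paper does not actually prove this lemma: it is quoted verbatim (up to sign conventions) from Lemma~2.7 of~\cite{MR2328927} and Lemma~2.2 of~\cite{MR2386723}, so there is no internal proof to compare against. Your Jacobi-field argument is the standard proof of that cited result and is correct: the solution \( J_X(r) = P_r{\left( \cosh(r) I - \sinh(r) B \right)} X \) of \( J'' = J \) with \( J(0) = X \), \( J'(0) = \bar{\del}_X N = -BX \) matches the paper's sign convention \( B(X) = -\bar{\del}_X N \), and substituting \( {\left( u_r^{-1} \right)}_* = P_r A_r \) recovers both displayed formulas; the orientation and convexity claims for \( r \ge 0 \) follow from \( \det A_r > 0 \) and the sign of the eigenvalues \( (\cosh(r)\beta - \sinh(r))/(\cosh(r) - \sinh(r)\beta) \) exactly as you say. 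The one point worth flagging is the convexity of \( S_r \) for \( r \in (-\varepsilon_0, 0) \): if \( \partial C \) has a principal curvature equal to \( 0 \) somewhere, then \( B_r \) acquires the eigenvalue \( -\tanh(r) > 0 \) there for every \( r < 0 \), and no shrinking of \( \varepsilon_0 \) repairs this. You correctly identify that strict convexity of \( \partial C \) (arranged, e.g., by enlarging \( C \) to an outward equidistant) is what makes the negative-\( r \) convexity claim go through; this is an imprecision inherited from the statement as cited rather than a gap in your argument, but it should be stated as a hypothesis rather than as something "automatic." Everything else — the identification of \( u_r \) with the inverse of the normal exponential map, the absence of focal points, and the commutation of the operators as power series in the symmetric operator \( B \) — is sound.
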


\begin{pwarn}
Some signs in Lemma~\ref{L:foliation} are opposite to those in~\cite{MR2328927} and~\cite{MR2386723}. This is due to the different choice of orientation: We take the outward orientation, while they take the inward orientation.
\end{pwarn}

\noindent We keep using the notations introduced in Lemma~\ref{L:foliation} throughout this paper. We obtain a foliation \( {\left\{ S_r \right\}}_{r \in (-\varepsilon_0, \infty)} \) by equidistant closed convex oriented smooth surfaces toward the conformal boundary \( S_{\infty} = \partial_{\infty} M \) of \( M \). For each \( r \in (-\varepsilon_0, \infty) \), let \( C_r \) be the compact embedded submanifold enclosed by \( S_r \) so that \( S_r = \partial C_r \). Figure~\ref{FIG:quasi-Fuchsian} provides a visualization of the setup.

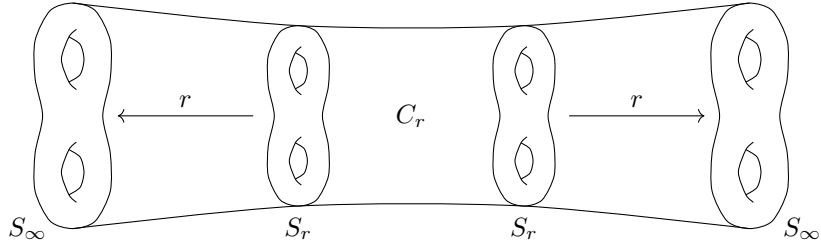
\begin{figure}
\begin{tikzpicture}
\draw plot [smooth] coordinates {(0,0) (3,.3) (6,.3) (9,0)};
\draw plot [smooth] coordinates {(0,3) (3,2.7) (6,2.7) (9,3)};
\draw plot [smooth cycle] coordinates {(0,0) (-.3,.1) (-.5,.5) (-.5,1) (-.4,1.5) (-.5,2) (-.5,2.5) (-.3,2.9) (0,3) (.3,2.9) (.5,2.5) (.5,2) (.4,1.5) (.5,1) (.5,.5) (.3,.1)};
\draw plot [smooth] coordinates {(.05,.35) (-.05,.45) (-.15,.75) (-.05,1.05) (.05,1.15)};
\draw plot [smooth] coordinates {(-.05,.45) (.1,.55) (.15,.75) (.1,.95) (-.05,1.05)};
\draw plot [smooth] coordinates {(.05,1.85) (-.05,1.95) (-.15,2.25) (-.05,2.55) (.05,2.65)};
\draw plot [smooth] coordinates {(-.05,1.95) (.1,2.05) (.15,2.25) (.1,2.45) (-.05,2.55)};
\draw plot [smooth cycle] coordinates {(9,0) (8.7,.1) (8.5,.5) (8.5,1) (8.6,1.5) (8.5,2) (8.5,2.5) (8.7,2.9) (9,3) (9.3,2.9) (9.5,2.5) (9.5,2) (9.4,1.5) (9.5,1) (9.5,.5) (9.3,.1)};
\draw plot [smooth] coordinates {(9.05,.35) (8.95,.45) (8.85,.75) (8.95,1.05) (9.05,1.15)};
\draw plot [smooth] coordinates {(8.95,.45) (9.1,.55) (9.15,.75) (9.1,.95) (8.95,1.05)};
\draw plot [smooth] coordinates {(9.05,1.85) (8.95,1.95) (8.85,2.25) (8.95,2.55) (9.05,2.65)};
\draw plot [smooth] coordinates {(8.95,1.95) (9.1,2.05) (9.15,2.25) (9.1,2.45) (8.95,2.55)};
\draw plot [smooth cycle] coordinates {(3,.3) (2.76,.38) (2.6,.7) (2.6,1.1) (2.68,1.5) (2.6,1.9) (2.6,2.3) (2.76,2.62) (3,2.7) (3.24,2.62) (3.4,2.3) (3.4,1.9) (3.32,1.5) (3.4,1.1) (3.4,.7) (3.24,.38)};
\draw plot [smooth] coordinates {(3.04,.58) (2.96,.66) (2.88,.9) (2.96,1.14) (3.04,1.22)};
\draw plot [smooth] coordinates {(2.96,.66) (3.08,.74) (3.12,.9) (3.08,1.06) (2.96,1.14)};
\draw plot [smooth] coordinates {(3.04,1.78) (2.96,1.86) (2.88,2.1) (2.96,2.34) (3.04,2.42)};
\draw plot [smooth] coordinates {(2.96,1.86) (3.08,1.94) (3.12,2.1) (3.08,2.26) (2.96,2.34)};
\draw plot [smooth cycle] coordinates {(6,.3) (5.76,.38) (5.6,.7) (5.6,1.1) (5.68,1.5) (5.6,1.9) (5.6,2.3) (5.76,2.62) (6,2.7) (6.24,2.62) (6.4,2.3) (6.4,1.9) (6.32,1.5) (6.4,1.1) (6.4,.7) (6.24,.38)};
\draw plot [smooth] coordinates {(6.04,.58) (5.96,.66) (5.88,.9) (5.96,1.14) (6.04,1.22)};
\draw plot [smooth] coordinates {(5.96,.66) (6.08,.74) (6.12,.9) (6.08,1.06) (5.96,1.14)};
\draw plot [smooth] coordinates {(6.04,1.78) (5.96,1.86) (5.88,2.1) (5.96,2.34) (6.04,2.42)};
\draw plot [smooth] coordinates {(5.96,1.86) (6.08,1.94) (6.12,2.1) (6.08,2.26) (5.96,2.34)};
\node at (4.5,1.5) {\( C_r \)};
\draw[->] (2.4,1.5) -- (.6,1.5);
\node[anchor=south] at (1.5,1.5) {\( r \)};
\draw[->] (6.6,1.5) -- (8.4,1.5);
\node[anchor=south] at (7.5,1.5) {\( r \)};
\node at (3,0) {\( S_r \)};
\node at (6,0) {\( S_r \)};
\node at (-.6,0) {\( S_{\infty} \)};
\node at (9.7,0) {\( S_{\infty} \)};
\end{tikzpicture}
\caption{An illustration of the case of quasi-Fuchsian \( M \).}
\label{FIG:quasi-Fuchsian}
\end{figure}

Let \( \pi \colon FM \surjto M \) be the oriented orthonormal frame bundle, which is a smooth principal \( \mathrm{SO}(3) \)-bundle. Let \( \bar{\omega} \in \Omega^1 {\left( FM, \mathfrak{so}(3) \right)} \) be its Levi-Civita connection. As an analogue of Lemma~4.2 of~\cite{MR2386723}, we want to find the asymptotic behavior of the integral of the \( \mathrm{SO}(3) \)-Chern-Simons \( 3 \)-form
\begin{align}
& \rho \longmapsto \int_{[S, S_{\rho}]} s^* \cs {\left( \bar{\omega} \right)} \quad \text{as} \ \rho \to \infty \label{E:asymptotics}
\end{align}
with respect to a given global smooth frame \( s \colon M \injto FM \), where \( [S, S_{\rho}] \) is the compact embedded submanifold enclosed by \( S \) and \( S_{\rho} \). Unlike the case of volume, it depends on the choice of the global smooth frame \( s \). At first sight, a natural candidate would be a global smooth frame consisting of two tangent vectors and one normal vector with respect to each \( S_r \), but such a frame exists iff each \( S_r \) is a union of tori by the Poincar{\'e}-Hopf index theorem. Since the Chern-Simons invariant is well known to depend significantly on the boundary behavior of the global smooth section, it is reasonable to impose a certain ``convergence'' condition on the global smooth frame along the foliation in order to achieve the asymptotics. We will see that being \emph{constant} in Definition~\ref{D:constant-frame} is the condition that works.

For convenience, let
\begin{align}
A_r & = \cosh(r) I - \sinh(r) B
\end{align}
for each \( r \in (-\varepsilon_0, \infty) \) so that \( \I_r = \I {\left( A_r {\left( u_r \right)}_* {}\cdot{}, A_r {\left( u_r \right)}_* {}\cdot{} \right)} \) by Lemma~\ref{L:foliation}. Each \( A_r \) is a linear automorphism on \( T_x S \) at each \( x \in S \).

\begin{plem} \label{L:Ar}
For each \( r \in (-\varepsilon_0, \infty) \), the map \( A_r \circ {\left( u_r \right)}_* \) induces an isomorphism \( A_r {\left( u_r \right)}_* \colon {\left. FM \right|}_{S_r} \to {\left. FM \right|}_S \) between smooth principal\/ \( \mathrm{SO}(3) \)-bundles.
\end{plem}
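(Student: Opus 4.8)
\emph{Approach.} The plan is to show that $A_r\circ(u_r)_*$, extended in the obvious way across the normal direction, is a fiberwise \emph{orientation-preserving linear isometry} of $TM$ along $S_r$ onto $TM$ along $S$; any such map induces an isomorphism of oriented orthonormal frame bundles by post-composition. The crux is the observation that most of the work has already been done: Lemma~\ref{L:foliation}(c), rewritten with the abbreviation $A_r$, is precisely the statement that $A_r(u_r)_*$ is a leafwise isometry.

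\emph{Setting up the fiberwise map.} The equidistant leaves $S_r$ are the level sets of the signed distance to $S$, and $\frac{\partial}{\partial r}$ is the associated unit normal geodesic field, defined on a neighborhood of $S$ and orthogonal to every leaf; hence $T_x M = T_x S_r \oplus \mathbb{R}\,\frac{\partial}{\partial r}\big|_x$ for $x\in S_r$, and likewise $T_y M = T_y S \oplus \mathbb{R}\,\frac{\partial}{\partial r}\big|_y$ for $y\in S$, both splittings orthogonal. Using these I would define, for $x\in S_r$, the linear map $L_r\colon T_x M \to T_{u_r(x)}M$ by $L_r\big|_{T_x S_r} = A_r\circ(u_r)_*$ and $L_r\big(\frac{\partial}{\partial r}\big|_x\big) = \frac{\partial}{\partial r}\big|_{u_r(x)}$; this is the extension of $A_r(u_r)_*$ referred to in the statement. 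That $L_r$ is a linear isometry follows at once from Lemma~\ref{L:foliation}(c): the identity $\I_r(X,Y) = \I\!\left(A_r(u_r)_*X, A_r(u_r)_*Y\right)$ says $A_r(u_r)_*$ is an isometry of $(T_x S_r, \I_r)$ onto $(T_{u_r(x)}S, \I)$, hence for the ambient metric restricted to the tangent planes; and $L_r$ sends the unit normal $\frac{\partial}{\partial r}\big|_x \perp T_x S_r$ to the unit normal $\frac{\partial}{\partial r}\big|_{u_r(x)} \perp T_{u_r(x)}S$, so it carries the orthogonal splitting isometrically onto the orthogonal splitting (the cross terms vanish on both sides).

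\emph{Orientation.} By Lemma~\ref{L:foliation}(b) the differential $(u_r)_*$ is orientation-preserving, and since each $A_r$ is an automorphism of $T_x S$ for $r\in(-\varepsilon_0,\infty)$, the function $r\mapsto \det A_r$ is continuous and nowhere zero on the connected interval $(-\varepsilon_0,\infty)$ with $\det A_0 = \det I = 1$; hence $\det A_r > 0$ throughout and $A_r$ preserves the orientation of $T_x S$. As $S_r$ and $S$ are both oriented by $\frac{\partial}{\partial r}$ in the sense of the Notations and $L_r$ fixes $\frac{\partial}{\partial r}$ while restricting to the orientation-preserving map $A_r(u_r)_*$ on tangent planes, it follows that $L_r$ maps an oriented basis of $T_x M$ to an oriented basis of $T_{u_r(x)}M$, i.e.\ $L_r$ preserves orientation.

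\emph{Assembling the bundle isomorphism.} Regarding a point of $\left.FM\right|_{S_r}$ over $x$ as an oriented linear isometry $\mathbb{R}^3 \isoto T_x M$, post-composition with $L_r$ yields an oriented linear isometry $\mathbb{R}^3 \isoto T_{u_r(x)}M$, i.e.\ a point of $\left.FM\right|_S$ over $u_r(x)$. This assignment covers the diffeomorphism $u_r\colon S_r\to S$; it is $\mathrm{SO}(3)$-equivariant because $L_r$ is linear, so post-composition commutes with the right $\mathrm{SO}(3)$-action; and it is smooth with smooth inverse induced by $L_r^{-1}$, since $B$, the metric and $u_r$ — hence $L_r$ — depend smoothly on $x$ and $L_r$ is invertible. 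Therefore it is an isomorphism of smooth principal $\mathrm{SO}(3)$-bundles. I do not anticipate a genuine obstacle; the only points that demand care are the orientation bookkeeping — in particular the connectedness argument giving $\det A_r > 0$ — and checking that the leafwise orientation convention is compatible with the chosen normal $\frac{\partial}{\partial r}$.
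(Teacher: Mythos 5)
Your proposal is correct and follows essentially the same route as the paper: extend $A_r(u_r)_*$ by sending $\frac{\partial}{\partial r}$ to $\frac{\partial}{\partial r}$, use Lemma~\ref{L:foliation}(c) and the orthogonal splitting $\I_M = \I_r \oplus \mathrm{d}r^2$ to get an ambient linear isometry $T_{x_r}M \to T_xM$, and let this act on frames. The extra checks you supply (the connectedness argument for $\det A_r>0$ and the $\mathrm{SO}(3)$-equivariance) are fine and merely make explicit what the paper leaves implicit.
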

\begin{proof}
Since \( T_{x_r} M = T_{x_r} S_r \oplus {\big\langle {\big. \frac{\partial}{\partial r} \big|}_{x_r} \big\rangle} \) for all \( x_r \in S_r \) and \( \I_M = \I_r \oplus \mathrm{d} r^2 \), the isometry \( {\left. A_r \right|}_x \circ {\left. {\left( u_r \right)}_* \right|}_{x_r} \colon T_{x_r} S_r \to T_x S \) induces an isometry \( {\big( {\left. A_r \right|}_x \circ {\left. {\left( u_r \right)}_* \right|}_{x_r} \big)} \oplus {\big( {\left. J_r \right|}_{x_r} \big)} \colon T_{x_r} M \to T_x M \), where \( x = u_r {\left( x_r \right)} \in S \) and \( {\left. J_r \right|}_{x_r} \colon {\big\langle {\big. \frac{\partial}{\partial r} \big|}_{x_r} \big\rangle} \to {\big\langle {\big. \frac{\partial}{\partial r} \big|}_x \big\rangle} \) is the map sending \( {\left. \frac{\partial}{\partial r} \right|}_{x_r} \) to \( {\left. \frac{\partial}{\partial r} \right|}_x \). This induces the desired map \( {\left. FM \right|}_{x_r} \to {\left. FM \right|}_x \).
\end{proof}

Given a smooth frame \( s \colon M \injto FM \), Lemma~\ref{L:Ar} provides a way to ``compare'' smooth frames \( {\left. s \right|}_{S_r} \colon S_r \injto {\left. FM \right|}_{S_r} \). This introduces the notion of constant frame.

\begin{pdef}[Constant frame] \label{D:constant-frame}
Let \( R \) be a smooth submanifold embedded in \( S \) of codimension zero (e.g., \( R = S \)), and let \( R_r = u_r^{-1}[R] \subseteq S_r \) for each \( r \in (-\varepsilon_0, \infty) \). Let \( (R_{-\varepsilon_0}, R_{\infty}) \) be the smooth submanifold formed by the equidistant foliation \( {\left\{ R_r \right\}}_{r \in (-\varepsilon_0, \infty)} \). Note that a smooth frame \( s \colon (R_{-\varepsilon_0}, R_{\infty}) \injto {\left. FM \right|}_{(R_{-\varepsilon_0}, R_{\infty})} \) determines a family \( {\left\{ g_r \right\}}_{r \in (-\varepsilon_0, \infty)} \) of smooth maps \( g_r \colon R \to \mathrm{SO}(3) \) given by
\begin{align}
A_r {\left( u_r \right)}_* {\left( {\left. s \right|}_{x_r} \right)} & = {\left. s \right|}_x \cdot {\left. g_r \right|}_x \qquad (x_r \in R_r, \ x = u_r {\left( x_r \right)} \in R).
\end{align}
For an interval \( J \subseteq (-\varepsilon_0, \infty) \), the smooth frame \( s \) is said to be \emph{constant} along the foliation \( {\left\{ R_r \right\}}_{r \in J} \) iff the map \( r \mapsto g_r \) is constant on \( J \) (i.e., \( g_r = g_{r'} \) for all \( r, r' \in J \)).
\end{pdef}

\begin{prmrk}
In the setup of Definition~\ref{D:constant-frame}, if \( s \colon (R_{-\varepsilon_0}, R_{\infty}) \injto {\left. FM \right|}_{(R_{-\varepsilon_0}, R_{\infty})} \) is a smooth frame that is constant along the foliation \( {\left\{ R_r \right\}}_{r \in J} \) for some interval \( 0 \in J \subseteq (-\varepsilon_0, \infty) \), then the following diagram commutes for all \( r \in J \).
\begin{align}
\begin{tikzcd}[ampersand replacement=\&]
R_r \ar[r, hook, "{\left. s \right|}_{R_r}"] \ar[d, "u_r"'] \& {\left. FM \right|}_{R_r} \ar[d, "A_r {\left( u_r \right)}_*"] \\
R \ar[r, hook, "{\left. s \right|}_R"] \& {\left. FM \right|}_R
\end{tikzcd}
\end{align}
\end{prmrk}

\begin{pprop}[Existence of constant frame] \label{P:constant-frame}
For any interval \( J \subseteq (-\varepsilon_0, \infty) \) with\/ \( \inf J > -\varepsilon_0 \), there exists a global smooth frame \( s \colon M \injto FM \) that is constant along the foliation\/ \( {\left\{ S_r \right\}}_{r \in J} \). In particular, there exists a global smooth frame \( s \colon M \injto FM \) that is constant along the foliation\/ \( {\left\{ S_r \right\}}_{r \in [0, \infty)} \).
\end{pprop}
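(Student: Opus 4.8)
The plan is to first reduce to the half-line $J = [c, \infty)$ with $c := \inf J \in (-\varepsilon_0, \infty)$ --- since $J \subseteq [c, \infty)$, a global smooth frame that is constant along $\{S_r\}_{r \in [c, \infty)}$ is \emph{a fortiori} constant along $\{S_r\}_{r \in J}$, and the ``in particular'' assertion is the case $c = 0$ --- and then to transport a single global frame outward along the foliation, freezing it before it reaches the region $\bigcup_{r \in J} S_r$. Two facts are needed. First, $M$ is an orientable $3$-manifold, hence parallelizable, so $FM \surjto M$ carries a global smooth section $\psi \colon M \injto FM$. Second, fixing $b \in (-\varepsilon_0, c)$ and writing $W_b = \overline{M \setminus C_b} = \bigcup_{r \in [b, \infty)} S_r$, the map $x_r \mapsto (u_r(x_r), r)$ is a diffeomorphism $W_b \iso S \times [b, \infty)$, and the bundle isomorphisms $A_r (u_r)_* \colon {\left. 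FM \right|}_{S_r} \to {\left. FM \right|}_S$ of Lemma~\ref{L:Ar} (which depend smoothly on $r$, as $A_r$ and $u_r$ do) assemble into a smooth identification of ${\left. FM \right|}_{W_b}$ with the pullback of ${\left. FM \right|}_S$ along $W_b \to S$. Under this identification, a global smooth frame on $W_b$ is exactly a smooth family $r \mapsto \Xi_r$ of smooth frames on $S$; by Definition~\ref{D:constant-frame} it is constant along $\{S_r\}_{r \in J'}$ precisely when $r \mapsto \Xi_r$ is constant on $J'$; and $\psi|_{W_b}$ corresponds to the family $\Psi_r := A_r (u_r)_* ( \psi|_{u_r^{-1}({}\cdot{})} )$.

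I would then build $s$ by modifying $\psi$ only on $W_b$. Choose a smooth nondecreasing $\mu \colon [b, \infty) \to [b, c)$ with $\mu(r) = r$ for $r$ near $b$ and $\mu$ constant on $[c', \infty)$ for some $c' \in (b, c)$. Set $s := \psi$ on $C_b$ and, on $W_b$, let $s|_{S_r}$ be the frame corresponding to $\Xi_r := \Psi_{\mu(r)}$. Near $S_b$ one has $\mu = \id$, so $\Xi_r = \Psi_r$ and $s = \psi$ on a neighborhood of $C_b$, whence the two pieces glue; and $r \mapsto \Psi_{\mu(r)}$ is smooth because $\Psi$ is smooth in its parameter and $\mu$ is smooth. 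Thus $s$ is a well-defined global smooth frame, automatically oriented orthonormal as a section of $FM$. For the constancy: for $r \ge c'$ the frame $\Xi_r = \Psi_{\mu(r)}$ is independent of $r$ --- call it $\phi$ --- so for every $r \in [c, \infty) \subseteq [c', \infty)$, $x_r \in S_r$, and $x = u_r(x_r) \in S$ we get $A_r (u_r)_* (s|_{x_r}) = \phi|_x$. Hence the map $g_r \colon S \to \mathrm{SO}(3)$ determined by $A_r (u_r)_* (s|_{x_r}) = s|_x \cdot g_r|_x$ does not depend on $r$ over $[c, \infty) \supseteq J$, which is the asserted constancy.

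The step I expect to be the real obstacle is making $s$ a genuinely \emph{global} smooth frame rather than one defined only on the ends. One cannot independently prescribe a constant frame $\phi$ on $S$ (hence on $W_b$) and an arbitrary frame on the core $C_b$ and then glue them along $S_b$: extending a given frame on $\partial C_b = S_b$ over $C_b$ is obstructed by a class in $H^2(C_b, \partial C_b; \mathbb{Z}/2) \iso H_1(C_b; \mathbb{Z}/2)$ coming from $\pi_1(\mathrm{SO}(3)) = \mathbb{Z}/2$, and this obstruction is genuinely nonzero (already for handlebodies). The device above circumvents it by propagating one fixed global frame $\psi$ and merely reparametrizing the radial variable, so that near $S_b$ the frame is still exactly $\psi$ and no extension problem arises; the only price is that $s$ need not be constant on all of $[b, \infty)$, but it is constant on $[c', \infty)$, which contains $J$.
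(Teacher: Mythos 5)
Your proof is correct, and its core strategy is the same as the paper's: take one global smooth frame $\hat{s}$ (guaranteed by parallelizability of orientable $3$-manifolds), and propagate it outward along the foliation via the bundle isomorphisms $A_r {\left( u_r \right)}_*$ of Lemma~\ref{L:Ar}, starting at some radius strictly below $\inf J$ so that no extension problem over the core arises. The difference is purely in the smoothing device. The paper freezes the frame abruptly at the radius $i = {\left( \inf J - \varepsilon_0 \right)} / 2$, producing a frame that is only continuous across $S_i$, and then repairs smoothness by applying the smooth approximation theorem (Lemma~\ref{L:SAT}) to the transition map $g \colon M \to \mathrm{SO}(3)$, relative to the closed set $[S_j, S_{\infty})$ on which $g$ is already smooth. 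You instead precompose the radial parameter with a smooth nondecreasing cutoff $\mu$ that is the identity near $b$ and constant beyond $c' < \inf J$, so the transported frame is smooth from the outset and no approximation step is needed. Your version is slightly more self-contained (it bypasses Lemma~\ref{L:SAT} and never passes through a merely continuous frame), at the cost of having to justify that the isomorphisms $A_r {\left( u_r \right)}_*$ assemble smoothly in $r$ into an identification of ${\left. FM \right|}_{W_b}$ with a pullback bundle --- which is true, since $A_r = \cosh(r) I - \sinh(r) B$ and the normal exponential map are smooth in $r$, but is exactly the point your argument leans on. Your closing remark about the $\pi_1 {\left( \mathrm{SO}(3) \right)}$-obstruction to extending a prescribed boundary frame over the core correctly identifies why both proofs must start from a single global frame rather than gluing independently chosen pieces.
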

\begin{proof}
Since every compact orientable smooth \( 3 \)-manifold is parallelizable, so is \( M \). It has a global smooth frame \( \hat{s} \colon M \injto FM \). Let \( j = \inf J \) and \( i = {\left( j - \varepsilon_0 \right)} / 2 \) so that \( -\varepsilon_0 < i < j < \infty \). Define \( s' \colon M \injto FM \) by
\begin{align*}
& x \longmapsto \begin{dcases*}
{\left. \hat{s} \right|}_x & if \( x \in C_i \), \\
{\left( A_r {\left( u_r \right)}_* \right)}^{-1} {\left( A_i {\left( u_i \right)}_* \right)} {\big( {\left. \hat{s} \right|}_{u_i^{-1} \circ u_r(x)} \big)} & if \( x \in S_r \) for some \( r \in [i, \infty) \).
\end{dcases*}
\end{align*}
Then it is a global \emph{continuous} frame that is smooth on \( M \setminus S_i \), which is constant along the foliation \( {\left\{ S_r \right\}}_{r \in [i, \infty)} \). Let \( g \colon M \to \mathrm{SO}(3) \) be the continuous map given by \( s' = \hat{s} \cdot g \). Note that it is smooth on \( [S_j, S_{\infty}) \). By Lemma~\ref{L:SAT}, there exists a smooth map \( h \colon M \to \mathrm{SO}(3) \) that coincides with \( g \) on \( [S_j, S_{\infty}) \). Then \( s = \hat{s} \cdot h \colon M \injto FM \) is the desired global smooth frame.
\end{proof}

From now on, we take \( \varepsilon = \varepsilon_0 / 2 > 0 \) for some technical convenience. The gauge transformation between constant frames is simply given as follows.

\begin{plem} \label{L:constant-frame-gauge-transformation}
In the setup of Definition~\ref{D:constant-frame}, let \( s, s' \colon (R_{-\varepsilon}, R_{\infty}) \injto {\left. FM \right|}_{(R_{-\varepsilon}, R_{\infty})} \) be two smooth frames that are constant along the foliation\/ \( {\left\{ R_r \right\}}_{r \in (-\varepsilon, \infty)} \). Let \( g \colon (R_{-\varepsilon}, R_{\infty}) \to \mathrm{SO}(3) \) be the smooth map given by \( s' = s \cdot g \). Then \( g = {\left. g \right|}_R \circ u \), where \( u \colon (R_{-\varepsilon}, R_{\infty}) \surjto R \) is the projection.
\end{plem}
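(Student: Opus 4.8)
The plan is to deduce the statement from the commuting square recorded in the Remark following Definition~\ref{D:constant-frame}, together with the fact that $A_r (u_r)_*$ is an isomorphism of principal $\mathrm{SO}(3)$-bundles (Lemma~\ref{L:Ar}) and hence is equivariant for the right $\mathrm{SO}(3)$-action. Since $0 \in (-\varepsilon, \infty)$ and both $s$ and $s'$ are constant along $\{R_r\}_{r \in (-\varepsilon, \infty)}$, the Remark applies to each of them: for every $r \in (-\varepsilon, \infty)$ and every $x_r \in R_r$, setting $x = u_r(x_r) \in R$, one has
\[
A_r (u_r)_* {\left( {\left. s \right|}_{x_r} \right)} = {\left. s \right|}_x, \qquad A_r (u_r)_* {\left( {\left. s' \right|}_{x_r} \right)} = {\left. s' \right|}_x .
\]
(Equivalently, the maps $g_r$ and $g_r'$ attached to $s$ and $s'$ in Definition~\ref{D:constant-frame} are identically equal to the identity $e$, because $A_0 = I$ and $u_0 = \id$ force $g_0 = g_0' = e$, and then constancy along $(-\varepsilon,\infty)$ propagates this to every $r$.)

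Next I would run the diagram chase. Fix $r \in (-\varepsilon, \infty)$ and $x_r \in R_r$, and put $x = u_r(x_r)$. By the definition of $g$ we have ${\left. s' \right|}_{x_r} = {\left. s \right|}_{x_r} \cdot {\left. g \right|}_{x_r}$; applying the equivariant morphism $A_r (u_r)_*$ and using the two identities above,
\[
{\left. s' \right|}_x = A_r (u_r)_* {\left( {\left. s \right|}_{x_r} \cdot {\left. g \right|}_{x_r} \right)} = {\left( A_r (u_r)_* {\left( {\left. s \right|}_{x_r} \right)} \right)} \cdot {\left. g \right|}_{x_r} = {\left. s \right|}_x \cdot {\left. g \right|}_{x_r} .
\]
On the other hand ${\left. s' \right|}_x = {\left. s \right|}_x \cdot {\left. g \right|}_x$, again by the definition of $g$. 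Since the $\mathrm{SO}(3)$-action on the fiber over $x$ is free, comparing the two displays gives ${\left. g \right|}_{x_r} = {\left. g \right|}_x = {\left. g \right|}_{u_r(x_r)}$. As $r$ and $x_r$ range over $(-\varepsilon, \infty)$ and $R_r$ respectively, the point $x_r$ ranges over all of $(R_{-\varepsilon}, R_\infty)$, so this is precisely the identity $g = {\left. g \right|}_R \circ u$.

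There is essentially no obstacle here: the argument is a short, formal diagram chase. The only two points warranting a moment's care are verifying that the hypotheses of the Remark after Definition~\ref{D:constant-frame} are met — which reduces to the trivial observation $0 \in (-\varepsilon, \infty)$, so that $g_r \equiv e$ rather than merely being constant — and invoking Lemma~\ref{L:Ar} in its equivariant form, so that the right $\mathrm{SO}(3)$-action passes through $A_r (u_r)_*$. Both ingredients are already available from the earlier part of the section.
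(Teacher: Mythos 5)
Your proposal is correct and is essentially the paper's own argument: fix $x_r$ with $x = u_r(x_r)$, use constancy (together with $A_0 = I$, $u_0 = \id$, hence $g_r \equiv e$) to get $A_r(u_r)_*(s|_{x_r}) = s|_x$ and likewise for $s'$, push $s'|_{x_r} = s|_{x_r}\cdot g|_{x_r}$ through the equivariant map $A_r(u_r)_*$, and compare with $s'|_x = s|_x \cdot g|_x$ using freeness of the action. Your explicit remark that $g_0 = e$ propagates to $g_r \equiv e$ is a detail the paper leaves implicit, but the route is the same.
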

\begin{proof}
Fix \( r \in (-\varepsilon, \infty) \). Fix \( x_r \in R_r \) and let \( x = u_r {\left( x_r \right)} \in R \). It suffices to show that \( {\left. g \right|}_{x_r} = {\left. g \right|}_x \). We have \( {\left. s' \right|}_x = {\left. s \right|}_x \cdot {\left. g \right|}_x \) and
\begin{align*}
{\left. s' \right|}_x & = A_r {\left( u_r \right)}_* {\left( {\left. s' \right|}_{x_r} \right)} = A_r {\left( u_r \right)}_* {\left( {\left. s \right|}_{x_r} \cdot {\left. g \right|}_{x_r} \right)} = A_r {\left( u_r \right)}_* {\left( {\left. s \right|}_{x_r} \right)} \cdot {\left. g \right|}_{x_r} = {\left. s \right|}_x \cdot {\left. g \right|}_{x_r}.
\end{align*}
This completes the proof.
\end{proof}

We want to compute the asymptotic behavior of~\eqref{E:asymptotics} with respect to a global smooth frame \( s = {\left( E_1, E_2, E_3 \right)} \colon M \injto FM \) that is constant along the equidistant foliation \( {\left\{ S_r \right\}}_{r \in (-\varepsilon, \infty)} \). In order to evaluate the integral, we divide \( S \) into pieces so that each admits a smooth frame consisting of two tangent vectors and one normal vector. For example, we can divide each connected component of \( S \) into two regions: a small disc \( R' \) and the complementary surface \( R'' \) with one boundary. Thus, let \( R \) be an arbitrary smooth submanifold embedded in \( S \) of codimension zero such that its boundary \( \partial R \) is diffeomorphic to the circle and there exists a smooth frame \( \bar{s} = {\left( \bar{E}_1, \bar{E}_2, \bar{E}_3 \right)} \colon R \injto {\left. FM \right|}_R \) with \( \bar{E}_3 = \frac{\partial}{\partial r} \). Let \( {\left\{ R_r \right\}}_{r \in (-\varepsilon, \infty)} \) be the foliation as in Definition~\ref{D:constant-frame}. We extend \( \bar{s} \) to \( (R_{-\varepsilon}, R_{\infty}) \) by defining \( {\left. \bar{s} \right|}_{R_r} = {\left( A_r {\left( u_r \right)}_* \right)}^{-1} {\left( {\left. \bar{s} \right|}_R \right)} \) for all \( r \in (-\varepsilon, \infty) \) so that it is constant along the foliation \( {\left\{ R_r \right\}}_{r \in (-\varepsilon, \infty)} \). Let \( g \colon (R_{-\varepsilon}, R_{\infty}) \to \mathrm{SO}(3) \) be the smooth map given by \( s = \bar{s} \cdot g \). By the gauge transformation formula in Proposition~\ref{P:CS}, we have
\begin{align}
\int_{[R, R_{\rho}]} s^* \cs {\left( \bar{\omega} \right)} & = \int_{[R, R_{\rho}]} \, \underbrace{\vphantom{\int} \bar{s}^* \cs {\left( \bar{\omega} \right)}}_{(\textnormal{I})} + \underbrace{\vphantom{\int} \mathrm{d} {\left\langle \Ad_{g^{-1}} \bar{s}^* \bar{\omega} \wedge g^* \mu \right\rangle}}_{(\textnormal{II})} - \underbrace{\vphantom{\int} \frac{1}{6} g^* {\left\langle \mu \wedge {\left[ \mu \wedge \mu \right]} \right\rangle}}_{(\textnormal{III})}, \label{E:terms}
\end{align}
where \( \mu \) is the Maurer-Cartan \( 1 \)-form on \( \mathrm{SO}(3) \). Note that \( (\textnormal{III}) = 0 \) by Lemma~\ref{L:constant-frame-gauge-transformation}. We will evaluate the integral of the term \( (\textnormal{I}) \) in Subsection~\ref{SS:I} and that of the term \( (\textnormal{II}) \) in Subsection~\ref{SS:II}. Readers who are not interested in the detailed computations may skip the following Subsections~\ref{SS:I}--\ref{SS:II} and go to the evaluation result~\eqref{E:PQ} at the end of Subsection~\ref{SS:II}.

\subsection{The integral of (I)} \label{SS:I}

In this subsection, we evaluate the integral of the term \( (\textnormal{I}) \) in~\eqref{E:terms}. The evaluation result is~\eqref{E:I}. Let \( {\left( \bar{\varepsilon}^1, \bar{\varepsilon}^2, \bar{\varepsilon}^3 \right)} \) be the dual of the smooth frame \( \bar{s} = {\left( \bar{E}_1, \bar{E}_2, \bar{E}_3 \right)} \). The pullback of the \( \mathrm{SO}(3) \)-Chern-Simons \( 3 \)-form via a smooth frame consisting of two tangent vectors and one normal vector has a tractable expression as follows. See Appendix~\ref{AA:CS-normal-frame} for its proof.
\begin{align}
\bar{s}^* \cs {\left( \bar{\omega} \right)} & = \frac{1}{4 \boldsymbol{\pi}^2} K_r \, {\left( \bar{s}^* \bar{\omega} \right)}^1_2 \wedge \bar{\varepsilon}^1 \wedge \bar{\varepsilon}^2 = \frac{1}{4 \boldsymbol{\pi}^2} K_r \, {\left( \bar{s}^* \bar{\omega} \right)}^1_2 {\left( \frac{\partial}{\partial r} \right)} \, \mathrm{d} a_r \wedge \mathrm{d} r \label{E:CS-normal-frame}
\end{align}
on \( (R_{-\varepsilon}, R_{\infty}) \), where \( K_r \) and \( \mathrm{d} a_r \) are the (intrinsic) Gaussian curvature and the area form of \( S_r \) respectively. Therefore, we have
\begin{align*}
\int_{[R, R_{\rho}]} (\textnormal{I}) & = \frac{1}{4 \boldsymbol{\pi}^2} \int_0^{\rho} \! \int_{R_r} {\left( \bar{s}^* \bar{\omega} \right)}^1_2 {\left( \frac{\partial}{\partial r} \right)} \, K_r \, \mathrm{d} a_r \, \mathrm{d} r \\
& = \frac{1}{4 \boldsymbol{\pi}^2} \int_0^{\rho} \! \int_{R_r} {\left( \bar{s}^* \bar{\omega} \right)}^1_2 {\left( \frac{\partial}{\partial r} \right)} \, u_r^* {\left( K \, \mathrm{d} a \right)} \, \mathrm{d} r \\
& = \frac{1}{4 \boldsymbol{\pi}^2} \int_0^{\rho} \! \int_R {\left( \bar{s}^* \bar{\omega} \right)}^1_2 {\left( \frac{\partial}{\partial r} \right)} \circ u_r^{-1} \, K \, \mathrm{d} a \, \mathrm{d} r \\
& = \frac{1}{4 \boldsymbol{\pi}^2} \int_R {\left( \int_0^{\rho} {\left( \bar{s}^* \bar{\omega} \right)}^1_2 {\left( \frac{\partial}{\partial r} \right)} \circ u_r^{-1} \, \mathrm{d} r \right)} \, K \, \mathrm{d} a.
\end{align*}

\begin{prmrk}
The \( r \)-integral in the last line is exactly (the minus of) the \emph{torsion number} introduced in~\cite{MR807069} of the geodesic orthogonal to \( S \): For each \( x \in R \),
\begin{align}
\int_0^{\rho} {\left. {\left( \bar{s}^* \bar{\omega} \right)}^1_2 {\left( \frac{\partial}{\partial r} \right)} \right|}_{u_r^{-1}(x)} \, \mathrm{d} r & = \int_{\gamma_{x, \rho}} \bar{s}^* \bar{\omega}^1_2 = - \tau {\left( \gamma_{x, \rho}, \bar{s} \right)},
\end{align}
where \( \gamma_{x, \rho} \) is the geodesic in \( [R, R_{\rho}] \) given by \( \gamma_{x, \rho}(r) = u_r^{-1}(x) \) for all \( r \in [0, \rho] \).
\end{prmrk}

Fix \( x \in R \), and let \( x_r = u_r^{-1}(x) \in R_r \) for each \( r \in [0, \infty) \). For each \( r \in [0, \infty) \), Koszul's formula gives
\begin{align*}
& {\left. {\left( \bar{s}^* \bar{\omega} \right)}^1_2 {\left( \frac{\partial}{\partial r} \right)} \right|}_{x_r} = \I_M {\left( {\left. \bar{E}_1 \right|}_{x_r}, {\left. \bar{\del}^M_{\frac{\partial}{\partial r}} \bar{E}_2 \right|}_{x_r} \right)} \\
& \qquad = \frac{1}{2} {\left( \I_M {\left( {\left. \bar{E}_2 \right|}_{x_r}, {\left. {\left[ \bar{E}_1, \frac{\partial}{\partial r} \right]} \right|}_{x_r} \right)} - \I_M {\left( {\left. \bar{E}_1 \right|}_{x_r}, {\left. {\left[ \bar{E}_2, \frac{\partial}{\partial r} \right]} \right|}_{x_r} \right)} \right)} \\
& \qquad = \frac{1}{2} {\left( \I_r {\left( {\left. \bar{E}_2 \right|}_{x_r}, {\left. {\left[ \bar{E}_1, \frac{\partial}{\partial r} \right]} \right|}_{x_r}^{\top} \right)} - \I_r {\left( {\left. \bar{E}_1 \right|}_{x_r}, {\left. {\left[ \bar{E}_2, \frac{\partial}{\partial r} \right]} \right|}_{x_r}^{\top} \right)} \right)},
\end{align*}
where the superscript \( \top \) denotes the projection of vector onto the leaf surface. Here, for each \( (i,j) \in \{ (1,2), (2,1) \} \), Lemma~\ref{L:Lie-bracket} gives
\begin{align*}
& \I_r {\left( {\left. \bar{E}_i \right|}_{x_r}, {\left. {\left[ \bar{E}_j, \frac{\partial}{\partial r} \right]} \right|}_{x_r}^{\top} \right)} = \I {\left( {\left. \bar{E}_i \right|}_x, A_r {\left( u_r \right)}_* {\left. {\left[ \bar{E}_j, \frac{\partial}{\partial r} \right]} \right|}_{x_r}^{\top} \right)} \\
& \qquad = \I {\left( {\left. \bar{E}_i \right|}_x, {\left( A_r|_x^{-1} \right)}^{\ell}_j \, A_r {\left. {\left[ \bar{E}_{\ell}, \frac{\partial}{\partial r} \right]} \right|}_x^{\top} \right)} + \sinh(r) {\left( \! {\left. {\left( I - B^2 \right)} A_r^{-1} \right|}_x \right)}^i_j \\
& \qquad = {\left( A_r|_x^{-1} \right)}^k_i {\left( A_r|_x^{-1} \right)}^{\ell}_j \, \I {\left( A_r {\left. \bar{E}_k \right|}_x, A_r {\left. {\left[ \bar{E}_{\ell}, \frac{\partial}{\partial r} \right]} \right|}_x^{\top} \right)} + \sinh(r) {\left( \! {\left. {\left( I - B^2 \right)} A_r^{-1} \right|}_x \right)}^i_j,
\end{align*}
where the Einstein summation convention is assumed. We discard the last term by Lemma~\ref{L:self-adjoint} and obtain
\begin{align*}
& {\left. {\left( \bar{s}^* \bar{\omega} \right)}^1_2 {\left( \frac{\partial}{\partial r} \right)} \right|}_{x_r} = \frac{1}{2} {\left( \begin{aligned}
& {\left( A_r|_x^{-1} \right)}^k_2 {\left( A_r|_x^{-1} \right)}^{\ell}_1 \\
& - {\left( A_r|_x^{-1} \right)}^k_1 {\left( A_r|_x^{-1} \right)}^{\ell}_2
\end{aligned} \right)} \, \I {\left( A_r {\left. \bar{E}_k \right|}_x, A_r {\left. {\left[ \bar{E}_{\ell}, \frac{\partial}{\partial r} \right]} \right|}_x^{\top} \right)} \\
& \qquad = \frac{1}{2 \det {\left. A_r \right|}_x} {\left( \I {\left( A_r {\left. \bar{E}_2 \right|}_x, A_r {\left. {\left[ \bar{E}_1, \frac{\partial}{\partial r} \right]} \right|}_x^{\top} \right)} - \I {\left( A_r {\left. \bar{E}_1 \right|}_x, A_r {\left. {\left[ \bar{E}_2, \frac{\partial}{\partial r} \right]} \right|}_x^{\top} \right)} \right)}
\end{align*}
for each \( r \in [0, \infty) \). Consequently, Lemma~\ref{L:limit} implies that
\begin{align}
\int_{[R, R_{\rho}]} (\textnormal{I}) & = \frac{\rho}{4 \boldsymbol{\pi}^2} \int_R P K \, \mathrm{d} a + (\textnormal{convergent terms}) \label{E:I}
\end{align}
as \( \rho \to \infty \), where \( P \colon R \to \mathbb{R} \) is the smooth function given by
\begin{align}
P & = \frac{1}{2 \det(I-B)} {\left( \begin{aligned}
& \I {\left( (I-B) \bar{E}_2, (I-B) {\left[ \bar{E}_1, \frac{\partial}{\partial r} \right]}^{\top} \right)} \\
& - \I {\left( (I-B) \bar{E}_1, (I-B) {\left[ \bar{E}_2, \frac{\partial}{\partial r} \right]}^{\top} \right)}
\end{aligned} \right)}. \label{E:P}
\end{align}
We will show in Subsection~\ref{SS:infinity} that \( P = 0 \) by describing it from infinity.

\subsection{The integral of (II)} \label{SS:II}

In this subsection, we evaluate the integral of the term \( (\textnormal{II}) \) in~\eqref{E:terms}. The evaluation result is~\eqref{E:II}. First of all, let \( {\left\{ L_1, L_2, L_3 \right\}} \) be the basis for \( \mathfrak{so}(3) \) defined by
\begin{align}
L_1 & = \begin{pmatrix*}[r]
0 & 0 & 0 \\
0 & 0 & -1 \\
0 & 1 & 0
\end{pmatrix*}, \qquad L_2 = \begin{pmatrix*}[r]
0 & 0 & 1 \\
0 & 0 & 0 \\
-1 & 0 & 0
\end{pmatrix*}, \qquad L_3 = \begin{pmatrix*}[r]
0 & -1 & 0 \\
1 & 0 & 0 \\
0 & 0 & 0
\end{pmatrix*}.
\end{align}
The boundary of \( [R, R_{\rho}] \) consists of \( R \), \( R_{\rho} \), and the ``cylinder'' \( T_{\rho} = \bigcup_{r \in [0, \rho]} \partial R_r \) (Figure~\ref{FIG:boundary}). For the asymptotics as \( \rho \to \infty \), we need to evaluate the integrals over \( R_{\rho} \) and \( T_{\rho} \). Let us first consider the integral over \( R_{\rho} \). Lemma~\ref{L:constant-frame-gauge-transformation} gives
\begin{align*}
& \int_{R_{\rho}} {\left\langle \Ad_{{\left. g \right|}_{R_{\rho}}^{-1}} {\left. {\left( \bar{s}^* \bar{\omega} \right)} \right|}_{R_{\rho}} \wedge {\left. g \right|}_{R_{\rho}}^* \mu \right\rangle} = \int_{R_{\rho}} u_{\rho}^* {\left\langle \Ad_{{\left. g \right|}_R^{-1}} {\left( u_{\rho}^{-1} \right)}^* {\left. {\left( \bar{s}^* \bar{\omega} \right)} \right|}_{R_{\rho}} \wedge {\left. g \right|}_R^* \mu \right\rangle} \\
& \qquad = \int_R {\left\langle \Ad_{{\left. g \right|}_R^{-1}} {\left( u_{\rho}^{-1} \right)}^* {\left. {\left( \bar{s}^* \bar{\omega} \right)} \right|}_{R_{\rho}} \wedge {\left. g \right|}_R^* \mu \right\rangle} \\
& \qquad = \int_R {\left( \begin{aligned}
& {\left\langle \Ad_{{\left. g \right|}_R^{-1}} {\left( u_{\rho}^{-1} \right)^*} {\left. {\left( \bar{s}^* \bar{\omega} \right)} \right|}_{R_{\rho}} {\left( \bar{E}_1 \right)}, {\left. g \right|}_R^* \mu {\left( \bar{E}_2 \right)} \right\rangle} \\
& - {\left\langle \Ad_{{\left. g \right|}_R^{-1}} {\left( u_{\rho}^{-1} \right)}^* {\left. {\left( \bar{s}^* \bar{\omega} \right)} \right|}_{R_{\rho}} {\left( \bar{E}_2 \right)}, {\left. g \right|}_R^* \mu {\left( \bar{E}_1 \right)} \right\rangle}
\end{aligned} \right)} \, \mathrm{d} a \\
& \qquad = - \sum_{\mathrm{cyc}} \int_R {\left( \begin{aligned}
& {\left( \bar{s}^* \bar{\omega} \right)}^i_j {\left( {\left( u_{\rho}^{-1} \right)}_* \bar{E}_1 \right)} {\left\langle \Ad_{{\left. g \right|}_R^{-1}} L_k, {\left. g \right|}_R^* \mu {\left( \bar{E}_2 \right)} \right\rangle} \\
& - {\left( \bar{s}^* \bar{\omega} \right)}^i_j {\left( {\left( u_{\rho}^{-1} \right)}_* \bar{E}_2 \right)} {\left\langle \Ad_{{\left. g \right|}_R^{-1}} L_k, {\left. g \right|}_R^* \mu {\left( \bar{E}_1 \right)} \right\rangle}
\end{aligned} \right)} \, \mathrm{d} a.
\end{align*}

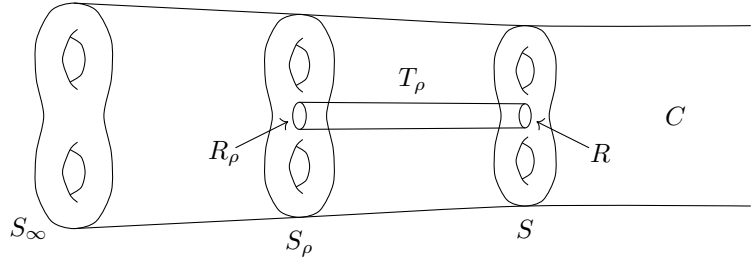
\begin{figure}
\begin{tikzpicture}
\draw plot [smooth] coordinates {(0,0) (3,.15) (6,.3) (9,.3)};
\draw plot [smooth] coordinates {(0,3) (3,2.85) (6,2.7) (9,2.7)};
\draw plot [smooth cycle] coordinates {(0,0) (-.3,.1) (-.5,.5) (-.5,1) (-.4,1.5) (-.5,2) (-.5,2.5) (-.3,2.9) (0,3) (.3,2.9) (.5,2.5) (.5,2) (.4,1.5) (.5,1) (.5,.5) (.3,.1)};
\draw plot [smooth] coordinates {(.05,.35) (-.05,.45) (-.15,.75) (-.05,1.05) (.05,1.15)};
\draw plot [smooth] coordinates {(-.05,.45) (.1,.55) (.15,.75) (.1,.95) (-.05,1.05)};
\draw plot [smooth] coordinates {(.05,1.85) (-.05,1.95) (-.15,2.25) (-.05,2.55) (.05,2.65)};
\draw plot [smooth] coordinates {(-.05,1.95) (.1,2.05) (.15,2.25) (.1,2.45) (-.05,2.55)};
\draw plot [smooth cycle] coordinates {(3,.15) (2.73,.24) (2.55,.6) (2.55,1.05) (2.64,1.5) (2.55,1.95) (2.55,2.4) (2.73,2.76) (3,2.85) (3.27,2.76) (3.45,2.4) (3.45,1.95) (3.36,1.5) (3.45,1.05) (3.45,.6) (3.27,.24)};
\draw plot [smooth] coordinates {(3.045,.465) (2.955,.555) (2.865,.825) (2.955,1.095) (3.045,1.185)};
\draw plot [smooth] coordinates {(2.955,.555) (3.09,.645) (3.135,.825) (3.09,1.005) (2.955,1.095)};
\draw plot [smooth] coordinates {(3.045,1.815) (2.955,1.905) (2.865,2.175) (2.955,2.445) (3.045,2.535)};
\draw plot [smooth] coordinates {(2.955,1.905) (3.09,1.995) (3.135,2.175) (3.09,2.355) (2.955,2.445)};
\draw plot [smooth cycle] coordinates {(6,.3) (5.76,.38) (5.6,.7) (5.6,1.1) (5.68,1.5) (5.6,1.9) (5.6,2.3) (5.76,2.62) (6,2.7) (6.24,2.62) (6.4,2.3) (6.4,1.9) (6.32,1.5) (6.4,1.1) (6.4,.7) (6.24,.38)};
\draw plot [smooth] coordinates {(6.04,.58) (5.96,.66) (5.88,.9) (5.96,1.14) (6.04,1.22)};
\draw plot [smooth] coordinates {(5.96,.66) (6.08,.74) (6.12,.9) (6.08,1.06) (5.96,1.14)};
\draw plot [smooth] coordinates {(6.04,1.78) (5.96,1.86) (5.88,2.1) (5.96,2.34) (6.04,2.42)};
\draw plot [smooth] coordinates {(5.96,1.86) (6.08,1.94) (6.12,2.1) (6.08,2.26) (5.96,2.34)};
\draw (3,1.5) ellipse (.09 and .18);
\draw (6,1.5) ellipse (.08 and .16);
\draw (3,1.32) -- (6,1.34);
\draw (3,1.68) -- (6,1.66);
\node at (8,1.5) {\( C \)};
\node at (-.6,0) {\( S_{\infty} \)};
\node at (3,-.2) {\( S_{\rho} \)};
\node at (6,0) {\( S \)};
\node[anchor=south] at (4.5,1.67) {\( T_{\rho} \)};
\draw[->] (6.85,1.075) -- (6.15,1.425);
\node at (7,1) {\( R \)};
\draw[->] (2.15,1.075) -- (2.85,1.425);
\node at (2,1) {\( R_{\rho} \)};
\end{tikzpicture}
\caption{The boundary of \( [R, R_{\rho}] \) when \( R \) is a small disc.}
\label{FIG:boundary}
\end{figure}

Fix \( x \in R \) and let \( x_{\rho} = u_{\rho}^{-1}(x) \in R_{\rho} \). For convenience, let \( A_{\rho}' = {\left( u_{\rho}^{-1} \right)}_* A_{\rho} {\left( u_{\rho} \right)}_* \). For each \( i, \ell \in \{ 1, 2 \} \), Koszul's formula and Lemma~\ref{L:foliation} give
\begin{align*}
& {\left( \bar{s}^* \bar{\omega} \right)}^1_2 {\left( {\left( u_{\rho}^{-1} \right)}_* {\left. \bar{E}_{\ell} \right|}_x \right)} = {\left( \bar{s}^* \bar{\omega} \right)}^1_2 {\left( A_{\rho}' {\left. \bar{E}_{\ell} \right|}_{x_{\rho}} \right)} \\
& \qquad = {\left( A_{\rho}'|_{x_{\rho}} \right)}^1_{\ell} \, {\left( \bar{s}^* \bar{\omega} \right)}^1_2 {\left( {\left. \bar{E}_1 \right|}_{x_{\rho}} \right)} + {\left( A_{\rho}'|_{x_{\rho}} \right)}^2_{\ell} \, {\left( \bar{s}^* \bar{\omega} \right)}^1_2 {\left( {\left. \bar{E}_2 \right|}_{x_{\rho}} \right)} \\
& \qquad = {\left( A_{\rho}'|_{x_{\rho}} \right)}^1_{\ell} \, \I_M {\left( {\left. \bar{E}_1 \right|}_{x_{\rho}}, {\left. \bar{\del}^M_{\bar{E}_1} \bar{E}_2 \right|}_{x_{\rho}} \right)} + {\left( A_{\rho}'|_{x_{\rho}} \right)}^2_{\ell} \, \I_M {\left( {\left. \bar{E}_1 \right|}_{x_{\rho}}, {\left. \bar{\del}^M_{\bar{E}_2} \bar{E}_2 \right|}_{x_{\rho}} \right)} \\
& \qquad = {\left( A_{\rho}'|_{x_{\rho}} \right)}^1_{\ell} \, \I_M {\left( {\left. \bar{E}_1 \right|}_{x_{\rho}}, {\left. {\left[ \bar{E}_1, \bar{E}_2 \right]} \right|}_{x_{\rho}} \right)} + {\left( A_{\rho}'|_{x_{\rho}} \right)}^2_{\ell} \, \I_M {\left( {\left. \bar{E}_2 \right|}_{x_{\rho}}, {\left. {\left[ \bar{E}_1, \bar{E}_2 \right]} \right|}_{x_{\rho}} \right)} \\
& \qquad = \I_M {\left( A_{\rho}' {\left. \bar{E}_{\ell} \right|}_{x_{\rho}}, {\left. {\left[ \bar{E}_1, \bar{E}_2 \right]} \right|}_{x_{\rho}} \right)} = \I_{\rho} {\left( A_{\rho}' {\left. \bar{E}_{\ell} \right|}_{x_{\rho}}, {\left. {\left[ \bar{E}_1, \bar{E}_2 \right]} \right|}_{x_{\rho}} \right)} \\
& \qquad = \I {\left( A_{\rho} {\left. \bar{E}_{\ell} \right|}_x, A_{\rho} {\left( u_{\rho} \right)}_* {\left. {\left[ \bar{E}_1, \bar{E}_2 \right]} \right|}_{x_{\rho}} \right)}, \\
& {\left( \bar{s}^* \bar{\omega} \right)}^i_3 {\left( {\left( u_{\rho}^{-1} \right)}_* {\left. \bar{E}_{\ell} \right|}_x \right)} = \I_M {\left( {\left. \bar{E}_i \right|}_{x_{\rho}}, {\left. \bar{\del}^M_{{\left( u_{\rho}^{-1} \right)}_* \bar{E}_{\ell}} \frac{\partial}{\partial r} \right|}_{x_{\rho}} \right)} \\
& \qquad = \I_M {\left( {\left. \bar{E}_i \right|}_{x_{\rho}}, - B_{\rho} {\left( u_{\rho}^{-1} \right)}_* {\left. \bar{E}_{\ell} \right|}_x \right)} = \I_{\rho} {\left( {\left. \bar{E}_i \right|}_{x_{\rho}}, - B_{\rho} {\left( u_{\rho}^{-1} \right)}_* {\left. \bar{E}_{\ell} \right|}_x \right)} \\
& \qquad = \I {\left( {\left. \bar{E}_i \right|}_x, - A_{\rho} {\left( u_{\rho} \right)}_* B_{\rho} {\left( u_{\rho}^{-1} \right)}_* {\left. \bar{E}_{\ell} \right|}_x \right)} = {\left( \sinh(\rho) I - \cosh(\rho) B|_x \right)}^i_{\ell}.
\end{align*}
The former converges as \( \rho \to \infty \) by Lemma~\ref{L:Lie-bracket}, so the integral of the terms \( (i,j,k) = (1,2,3) \) also converges as \( \rho \to \infty \) by Lemma~\ref{L:limit}. We find that
\begin{align}
\int_{R_{\rho}} {\left\langle \Ad_{g^{-1}} \bar{s}^* \bar{\omega} \wedge g^* \mu \right\rangle} & = \boldsymbol{e}^{\rho} \int_R Q \, \mathrm{d} a + (\textnormal{convergent terms})
\end{align}
as \( \rho \to \infty \), where \( Q \colon R \to \mathbb{R} \) is the smooth function given by
\begin{align}
\begin{aligned}
Q & = \begin{aligned}[t]
& - \frac{(I-B)^2_1}{2} {\left\langle \Ad_{g^{-1}} L_1, g^* \mu {\left( \bar{E}_2 \right)} \right\rangle} + \frac{(I-B)^2_2}{2} {\left\langle \Ad_{g^{-1}} L_1, g^* \mu {\left( \bar{E}_1 \right)} \right\rangle} \\
& + \frac{(I-B)^1_1}{2} {\left\langle \Ad_{g^{-1}} L_2, g^* \mu {\left( \bar{E}_2 \right)} \right\rangle} - \frac{(I-B)^1_2}{2} {\left\langle \Ad_{g^{-1}} L_2, g^* \mu {\left( \bar{E}_1 \right)} \right\rangle}
\end{aligned} \\
& = \frac{1}{2} \det(I-B) {\left( \begin{aligned}
& {\left\langle \Ad_{g^{-1}} L_1, g^* \mu {\left( (I-B)^{-1} \bar{E}_1 \right)} \right\rangle} \\
& + {\left\langle \Ad_{g^{-1}} L_2, g^* \mu {\left( (I-B)^{-1} \bar{E}_2 \right)} \right\rangle}
\end{aligned} \right)}.
\end{aligned} \label{E:Q}
\end{align}

Now, let us consider the integral over \( T_{\rho} \). Let \( \bar{E} \) be the positively oriented unit vector field on the infinite cylinder \( T = \bigcup_{r \in (-\varepsilon, \infty)} \partial R_r \) that is tangent to each leaf curve \( \partial R_r \) (which is diffeomorphic to the circle), and let \( \bar{\varepsilon} \) be its dual. For each \( r \in (-\varepsilon, \infty) \), let \( a_r \colon \partial R \to (0, \infty) \) be the positive smooth function given by \( a_r {\left( u_r \right)}_* {\left. \bar{E} \right|}_{\partial R_r} = {\left. \bar{E} \right|}_{\partial R} \), i.e., \( a_r u_r^* {\left. \bar{\varepsilon} \right|}_{\partial R} = {\left. \bar{\varepsilon} \right|}_{\partial R_r} \). Then we have
\begin{align*}
& \int_{T_{\rho}} {\left\langle \Ad_{g^{-1}} \bar{s}^* \bar{\omega} \wedge g^* \mu \right\rangle} = \int_{T_{\rho}} {\left\langle \Ad_{g^{-1}} \bar{s}^* \bar{\omega} \wedge g^* \mu \right\rangle} {\left( \bar{E}, \frac{\partial}{\partial r} \right)} \, \bar{\varepsilon} \wedge \mathrm{d} r \\
& \qquad = \int_0^{\rho} {\left( \int_{\partial R_r} {\left\langle \Ad_{g^{-1}} \bar{s}^* \bar{\omega} \wedge g^* \mu \right\rangle} {\left( \bar{E}, \frac{\partial}{\partial r} \right)} \, {\left. \bar{\varepsilon} \right|}_{\partial R_r} \right)} \, \mathrm{d} r \\
& \qquad = \int_0^{\rho} {\left( \int_{\partial R} {\left\langle \Ad_{g^{-1}} \bar{s}^* \bar{\omega} \wedge g^* \mu \right\rangle} {\left( \bar{E}, \frac{\partial}{\partial r} \right)} \circ u_r^{-1} \, a_r \, {\left. \bar{\varepsilon} \right|}_{\partial R} \right)} \, \mathrm{d} r \\
& \qquad = \int_{\partial R} {\left( \int_0^{\rho} {\left\langle \Ad_{g^{-1}} \bar{s}^* \bar{\omega} \wedge g^* \mu \right\rangle} {\left( \bar{E}, \frac{\partial}{\partial r} \right)} \circ u_r^{-1} \, a_r \, \mathrm{d} r \right)} \, {\left. \bar{\varepsilon} \right|}_{\partial R}.
\end{align*}

Fix \( x \in \partial R \), and let \( x_r = u_r^{-1}(x) \in \partial R_r \) for each \( r \in [0, \infty) \). Note that \( g_* \frac{\partial}{\partial r} = 0 \) by Lemma~\ref{L:constant-frame-gauge-transformation}. For each \( r \in [0, \infty) \), it follows that
\begin{align*}
& {\left. {\left\langle \Ad_{g^{-1}} \bar{s}^* \bar{\omega} \wedge g^* \mu \right\rangle} {\left( \bar{E}, \frac{\partial}{\partial r} \right)} \right|}_{x_r} = - {\left\langle \Ad_{g {\left( x_r \right)}^{-1}} \bar{s}^* \bar{\omega} {\left( {\left. \frac{\partial}{\partial r} \right|}_{x_r} \right)}, g^* \mu {\left( {\left. \bar{E} \right|}_{x_r} \right)} \right\rangle} \\
& \qquad = - {\left\langle \Ad_{g(x)^{-1}} \bar{s}^* \bar{\omega} {\left( {\left. \frac{\partial}{\partial r} \right|}_{x_r} \right)}, u_r^* {\left. g \right|}_{\partial R}^* \mu {\left( {\left. \bar{E} \right|}_{x_r} \right)} \right\rangle} \\
& \qquad = \frac{1}{a_r(x)} \sum_{\mathrm{cyc}} {\left. {\left( \bar{s}^* \bar{\omega} \right)}^i_j {\left( \frac{\partial}{\partial r} \right)} \right|}_{x_r} {\left\langle \Ad_{g(x)^{-1}} L_k, {\left. g \right|}_{\partial R}^* \mu {\left( {\left. \bar{E} \right|}_x \right)} \right\rangle}.
\end{align*}
Here, the term \( (i,j) = (1,2) \) was already computed in Subsection~\ref{SS:I}, and the other two terms \( (i,j) \in \{ (2,3), (3,1) \} \) vanish by the observation~\eqref{E:connection-vanish} in Appendix~\ref{AA:CS-normal-frame}. Lemma~\ref{L:limit} implies that
\begin{align}
\int_{T_{\rho}} {\left\langle \Ad_{g^{-1}} \bar{s}^* \bar{\omega} \wedge g^* \mu \right\rangle} & = \rho \int_{\partial R} P {\left\langle \Ad_{g^{-1}} L_3, {\left. g \right|}_{\partial R}^* \mu \right\rangle} + (\textnormal{convergent terms})
\end{align}
as \( \rho \to \infty \), where \( P \colon R \to \mathbb{R} \) is the smooth function defined in~\eqref{E:P}. Consequently,
\begin{align}
\int_{[R, R_{\rho}]} (\textnormal{II}) & = \boldsymbol{e}^{\rho} \int_R Q \, \mathrm{d} a + \rho \int_{\partial R} P {\left\langle \Ad_{g^{-1}} L_3, {\left. g \right|}_{\partial R}^* \mu \right\rangle} + (\textnormal{convergent terms}) \label{E:II}
\end{align}
as \( \rho \to \infty \). We combine~\eqref{E:I} and~\eqref{E:II} to summarize the asymptotic behavior of~\eqref{E:terms} as follows. As \( \rho \to \infty \),
\begin{align}
\begin{aligned}
\int_{[R, R_{\rho}]} s^* \cs {\left( \bar{\omega} \right)} & = \begin{aligned}[t]
& \boldsymbol{e}^{\rho} \int_R Q \, \mathrm{d} a + \frac{\rho}{4 \boldsymbol{\pi}^2} \int_R P K \, \mathrm{d} a + \rho \int_{\partial R} P {\left\langle \Ad_{g^{-1}} L_3, {\left. g \right|}_{\partial R}^* \mu \right\rangle} \\
& + (\textnormal{convergent terms}),
\end{aligned}
\end{aligned} \label{E:PQ}
\end{align}
where \( P, Q \colon R \to \mathbb{R} \) are the smooth functions defined in~\eqref{E:P} and~\eqref{E:Q}.

\subsection{Description from infinity} \label{SS:infinity}

In this subsection, we simplify the smooth functions \( P, Q \colon R \to \mathbb{R} \) in~\eqref{E:PQ} by describing them ``from infinity'': We will show that \( P = 0 \) identically, and find another nice expression of \( Q \) that is independent of the choice of the smooth frame \( \bar{s} \). We begin with the notion of \emph{metric at infinity} introduced in Section~5 of~\cite{MR2386723}.

\begin{pdef} \label{D:metric-at-infinity}
Let \( \Sigma \) be an oriented convex smooth surface with metric \( \I_{\Sigma} \) embedded in \( M \). The \emph{metric at infinity} is the metric \( \I_{\Sigma}^{\infty} \) on \( \Sigma \) defined by
\begin{align}
\I_{\Sigma}^{\infty} {\left( X, Y \right)} & = \frac{1}{2} \I_{\Sigma} {\left( {\left( I - B_{\Sigma} \right)} X, {\left( I - B_{\Sigma} \right)} Y \right)} \qquad (X, Y \in \Gamma {\left( T \Sigma \right)}),
\end{align}
where \( I \) is the identity operator and \( B_{\Sigma} \) is the Weingarten map of \( \Sigma \). Since \( \Sigma \) is convex, \( \I_{\Sigma}^{\infty} \) is a well-defined metric on \( \Sigma \). The surface \( \Sigma \) with the metric \( \I_{\Sigma}^{\infty} \) is denoted by \( \Sigma^{\infty} \) to distinguish it from the original surface \( \Sigma \) with the metric \( \I_{\Sigma} \).
\end{pdef}

\begin{pwarn}
The sign of \( B_{\Sigma} \) in Definition~\ref{D:metric-at-infinity} is opposite to that in~\cite{MR2386723}. This is due to the different choice of orientation. Both definitions are exactly the same.
\end{pwarn}

For each \( r \in (-\varepsilon, \infty) \), let \( v_r \colon S_r^{\infty} \to S_r \) be the orientation-preserving diffeomorphism given by the set-identity map, and let \( u_r^{\infty} = v^{-1} \circ u_r \circ v_r \colon S_r^{\infty} \to S^{\infty} \) so that the following diagram commutes.
\begin{align}
\begin{tikzcd}[ampersand replacement=\&]
S_r \ar[d, "u_r"'] \& S_r^{\infty} \ar[l, "v_r"'] \ar[d, "u_r^{\infty}"] \\
S \& S^{\infty} \ar[l, "v"']
\end{tikzcd} \label{E:projection-infinity}
\end{align}
The surfaces \( S_r^{\infty} \) are conformally equivalent to each other in the following sense.

\begin{pprop} \label{P:conformal-equivalence}
For each \( r \in (-\varepsilon, \infty) \), we have
\begin{align}
\I_r^{\infty} & = \boldsymbol{e}^{2r} {\left( u_r^{\infty} \right)}^* \I^{\infty}.
\end{align}
Moreover, the pullback of each\/ \( \I_r^{\infty} \) via the projection \( \partial_{\infty} M = S_{\infty} \to S_r \) belongs to the conformal class of the conformal boundary \( \partial_{\infty} M \) (for each component).
\end{pprop}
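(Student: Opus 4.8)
The plan is to get the first identity by a direct computation from Lemma~\ref{L:foliation} and Definition~\ref{D:metric-at-infinity}, and then to deduce the ``moreover'' clause by pulling everything back to \( \partial_{\infty} M \) along the compatible family of equidistant projections and invoking the characterization of the metric at infinity from~\cite{MR2386723}.

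For the first part, I would set \( A_r = \cosh(r) I - \sinh(r) B \) and \( D_r = \sinh(r) I - \cosh(r) B \), so that Lemma~\ref{L:foliation} reads \( \I_r = \I {\left( A_r {\left( u_r \right)}_* {}\cdot{}, A_r {\left( u_r \right)}_* {}\cdot{} \right)} \) and \( B_r = - {\left( u_r^{-1} \right)}_* A_r^{-1} D_r {\left( u_r \right)}_* \). The one observation that makes everything collapse is the algebraic identity \( A_r + D_r = \boldsymbol{e}^r {\left( I - B \right)} \); it yields
\[
I - B_r = {\left( u_r^{-1} \right)}_* A_r^{-1} {\left( A_r + D_r \right)} {\left( u_r \right)}_* = \boldsymbol{e}^r {\left( u_r^{-1} \right)}_* A_r^{-1} {\left( I - B \right)} {\left( u_r \right)}_*.
\]
Substituting this into \( \I_r^{\infty} {\left( X, Y \right)} = \tfrac{1}{2} \I_r {\left( {\left( I - B_r \right)} X, {\left( I - B_r \right)} Y \right)} \) and using the formula for \( \I_r \), the factors \( A_r {\left( u_r \right)}_* \) and \( {\left( u_r^{-1} \right)}_* A_r^{-1} \) cancel, leaving \( \I_r^{\infty} {\left( X, Y \right)} = \tfrac{\boldsymbol{e}^{2r}}{2} \I {\left( {\left( I - B \right)} {\left( u_r \right)}_* X, {\left( I - B \right)} {\left( u_r \right)}_* Y \right)} = \boldsymbol{e}^{2r} {\left( u_r^* \I^{\infty} \right)} {\left( X, Y \right)} \). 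Finally I would translate \( u_r^* \I^{\infty} \) into \( {\left( u_r^{\infty} \right)}^* \I^{\infty} \) using the set-identity maps \( v, v_r \) and the commuting square~\eqref{E:projection-infinity}, which gives \( \I_r^{\infty} = \boldsymbol{e}^{2r} {\left( u_r^{\infty} \right)}^* \I^{\infty} \).

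For the ``moreover'' clause, let \( p_r \colon \partial_{\infty} M = S_{\infty} \to S_r \) denote the normal (closest-point) projection. Since all the \( S_r \) are orthogonal to the common family of normal geodesics, these projections are compatible: \( u_r \circ p_r = p_0 \). Combining this with the first part,
\[
p_r^* \I_r^{\infty} = \boldsymbol{e}^{2r} p_r^* u_r^* \I^{\infty} = \boldsymbol{e}^{2r} {\left( u_r \circ p_r \right)}^* \I^{\infty} = \boldsymbol{e}^{2r} p_0^* \I^{\infty},
\]
so all these pullbacks agree with the single metric \( p_0^* \I^{\infty} = p_0^* \I_S^{\infty} \) on \( \partial_{\infty} M \) up to the globally constant conformal factor \( \boldsymbol{e}^{2r} \). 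It then remains to see that \( p_0^* \I_S^{\infty} \) represents the conformal structure of \( \partial_{\infty} M \), which is exactly the defining property of the metric at infinity of a convex surface (Section~5 of~\cite{MR2386723}; cf.\ Epstein's construction in~\cite{EpsteinWS}). If one prefers a self-contained check: writing the hyperbolic metric of the hyperbolic end in Fermi coordinates as \( \mathrm{d} r^2 + \I_r \) and substituting the boundary defining function \( \rho = 2 \boldsymbol{e}^{-r} \), the expansion \( A_r = \tfrac{\boldsymbol{e}^r}{2} {\left( I - B \right)} + \tfrac{\boldsymbol{e}^{-r}}{2} {\left( I + B \right)} \) puts it in the form \( \rho^{-2} {\left( \mathrm{d} \rho^2 + h_{\rho} \right)} \) with \( p_r^* h_{\rho} \to 2 p_0^* \I_S^{\infty} \) as \( \rho \to 0 \), exhibiting \( p_0^* \I_S^{\infty} \) as a representative of the conformal class at infinity.

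The first part is routine linear algebra once the identity \( A_r + D_r = \boldsymbol{e}^r {\left( I - B \right)} \) is spotted, so I expect no difficulty there. The only place needing genuine care is the ``moreover'' clause: pinning down the meaning of ``the projection \( S_{\infty} \to S_r \)'', verifying the compatibility \( u_r \circ p_r = p_0 \), and reconciling the abstract identification \( S_{\infty} \iso S \) with the collar coordinates on the hyperbolic end used to define the conformal structure on \( \partial_{\infty} M \). I anticipate this bookkeeping of identifications --- together with correctly extracting the \( \rho \to 0 \) limit of \( h_{\rho} \) if one avoids citing~\cite{MR2386723} --- to be the main obstacle, rather than any essentially new analytic input.
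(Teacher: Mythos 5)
Your proof of the displayed identity is the same as the paper's: the paper reduces it to checking \( A_r {\left( u_r \right)}_* {\left( I - B_r \right)} = \boldsymbol{e}^r (I-B) {\left( u_r \right)}_* \), which is precisely your observation \( A_r + D_r = \boldsymbol{e}^r (I-B) \) combined with the formula for \( B_r \) from Lemma~\ref{L:foliation}. Your reduction of the ``moreover'' clause to a single value of \( r \) also matches the paper's Appendix~\ref{AA:conformal-equivalence}, which sets \( w_r = u_r^{-1} \circ u_{\infty} \) and uses \( w_r^* \I_r^{\infty} = \boldsymbol{e}^{2r} u_{\infty}^* \I^{\infty} \) in exactly the way you use \( u_r \circ p_r = p_0 \). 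Where you diverge is the remaining core fact that \( p_0^* \I^{\infty} \) represents the conformal class of \( \partial_{\infty} M \): you either cite~\cite{MR2386723} or sketch the conformally-compact characterization (\( \rho = 2 \boldsymbol{e}^{-r} \), \( \lim_{\rho \to 0} \rho^2 g = 2 p_0^* \I^{\infty} \)), whereas the paper works directly in the upper half-space model, explicitly computes the hitting map of the normal geodesics (finding \( \mathrm{d} {\left( w^{-1} \right)} = I + \tfrac{1}{2} H_f \) and \( B = -I - H_f \) at the base point, hence \( (I-B) \, \mathrm{d} w = 2I \)), and reads off conformality against the round structure on \( \partial_{\infty} \mathbb{H}^3 \). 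Both are legitimate; the paper's computation has the advantage of identifying the conformal class as literally the one induced from \( \hat{\mathbb{C}} = \partial_{\infty} \mathbb{H}^3 \) on \( \Omega(\Gamma)/\Gamma \), while your boundary-defining-function argument implicitly invokes the (standard, but not free) equivalence of that definition with the conformal infinity of the compactified metric; if you want your version to be genuinely self-contained you should say a word about why those two notions of the conformal class of \( \partial_{\infty} M \) agree.
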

\begin{proof}
For the former, it suffices to check that \( A_r {\left( u_r \right)}_* {\left( I - B_r \right)} = \boldsymbol{e}^r (I-B) {\left( u_r \right)}_* \), which easily follows from Lemma~\ref{L:foliation}. The latter is mentioned in Section~5 of~\cite{MR2386723}. See Appendix~\ref{AA:conformal-equivalence} of the present paper for its proof.
\end{proof}

Transferring data to the conformal boundary \( S_{\infty} = \partial_{\infty} M \) by identifying it with \( S^{\infty} \) as in~\cite{MR2386723} has a difficulty in our case due to a dimensional issue: It indeed requires us to find an intrinsic way to handle \( 3 \)-frames along the surface \( S_{\infty} \). An alternative way is to consider instead \emph{foliation at infinity}, which is a \( 3 \)-dimensional object able to handle \( 3 \)-frames. The construction is as follows. Replacing each leaf metric on the equidistant foliation \( {\left\{ S_r \right\}}_{r \in (-\varepsilon, \infty)} \) with the metric at infinity produces another foliation \( {\left\{ S_r^{\infty} \right\}}_{r \in (-\varepsilon, \infty)} \). It has the same underlying leaf surfaces but with different metrics. Let \( W = (S_{-\varepsilon}^{\infty}, S_{\infty}^{\infty}) \) be the noncompact smooth manifold formed by the equidistant foliation \( {\left\{ S_r^{\infty} \right\}}_{r \in (-\varepsilon, \infty)} \). By Proposition~\ref{P:conformal-equivalence}, the smooth manifold \( W = (S_{-\varepsilon}^{\infty}, S_{\infty}^{\infty}) \) with the metric \( \mathrm{d} r^2 \oplus \I_r^{\infty} \) is isometric to the warped product manifold \( (-\varepsilon, \infty) \tilde{\times} S^{\infty} \) with the metric \( \mathrm{d} r^2 \oplus \boldsymbol{e}^{2r} \I^{\infty} \), via the isometry \( (-\varepsilon, \infty) \tilde{\times} S^{\infty} \isoto W \) defined by \( (r,y) \mapsto {\left( u_r^{\infty} \right)}^{-1} (y) \).

Let \( FW \surjto W \) be the oriented orthonormal frame bundle. For convenience, let
\begin{align}
V_r & = \frac{1}{\sqrt{2}} {\left( I - B_r \right)}
\end{align}
for each \( r \in (-\varepsilon, \infty) \) so that \( \I_r^{\infty} = \I_r {\left( V_r {\left( v_r \right)}_* {}\cdot{}, V_r {\left( v_r \right)}_* {}\cdot{} \right)} \) by Definition~\ref{D:metric-at-infinity}. Each \( V_r \) is a linear automorphism on \( T_{x_r} S_r \) at each \( x_r \in S_r \).

\begin{plem}
For each \( r \in (-\varepsilon, \infty) \), the map \( V_r \circ {\left( v_r \right)}_* \) induces an isomorphism \( V_r {\left( v_r \right)}_* \colon {\left. FW \right|}_{S_r^{\infty}} \to {\left. FM \right|}_{S_r} \) between smooth principal\/ \( \mathrm{SO}(3) \)-bundles.
\end{plem}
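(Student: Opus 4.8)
The plan is to reproduce, almost verbatim, the argument used for Lemma~\ref{L:Ar}: an isomorphism between oriented orthonormal frame bundles amounts to a smooth, fiberwise orientation-preserving linear isometry of the underlying tangent spaces followed by post-composition on frames. So it suffices to exhibit, for each $r$ and each $x_r \in S_r^{\infty}$ with image $v_r(x_r) \in S_r$, an orientation-preserving linear isometry $T_{x_r} W \to T_{v_r(x_r)} M$ that depends smoothly on $x_r$, built out of $V_r \circ (v_r)_*$ on the leaf directions and the obvious identification on the normal direction, and then to declare the induced map on frames to be $V_r (v_r)_*$.

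First I would handle the leaf directions. By the defining relation $\I_r^{\infty} = \I_r {\left( V_r (v_r)_* {}\cdot{}, V_r (v_r)_* {}\cdot{} \right)}$ recorded just above the statement (Definition~\ref{D:metric-at-infinity}), the map $V_r|_{x_r} \circ (v_r)_*|_{x_r} \colon T_{x_r} S_r^{\infty} \to T_{v_r(x_r)} S_r$ is a linear isometry; it is orientation-preserving because $v_r$ is orientation-preserving by construction and because $V_r = \frac{1}{\sqrt{2}} {\left( I - B_r \right)}$ is positive definite --- indeed $S_r$ is convex (Lemma~\ref{L:foliation}(a)), so the Weingarten map $B_r$ is negative semidefinite and $\det {\left( I - B_r \right)} \ge 1$. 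Next I would glue in the normal direction. Exactly as in the proof of Lemma~\ref{L:Ar}, equidistance of the foliation $\{ S_{\rho} \}$ gives the orthogonal splitting $\I_M = \I_r \oplus \mathrm{d} r^2$ along $S_r$, while the corresponding splitting $\mathrm{d} r^2 \oplus \I_r^{\infty}$ along $S_r^{\infty}$ holds by the very construction of the warped-product manifold $W$. Letting $J_r|_{x_r}$ send $\frac{\partial}{\partial r}|_{x_r}$ to $\frac{\partial}{\partial r}|_{v_r(x_r)}$, the orthogonal direct sum ${\left( V_r|_{x_r} \circ (v_r)_*|_{x_r} \right)} \oplus J_r|_{x_r} \colon T_{x_r} W \to T_{v_r(x_r)} M$ is a linear isometry, and it is orientation-preserving since both $W$ and $M$ are oriented so that $\frac{\partial}{\partial r}$ followed by the oriented leaf orientation is the ambient orientation, the leaf part being orientation-preserving by the previous step.

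It remains to promote this to a principal-bundle isomorphism. Viewing a point of $FW|_{x_r}$ as an orientation-preserving linear isometry $\mathbb{R}^3 \to T_{x_r} W$ and post-composing with the isometry just constructed produces a point of $FM|_{v_r(x_r)}$; this assignment is fiberwise bijective (the isometry is invertible), smooth in $x_r$ (everything in sight depends smoothly on $x_r$), commutes with the right $\mathrm{SO}(3)$-action (which is pre-composition, hence unaffected by post-composition), and covers the diffeomorphism $v_r \colon S_r^{\infty} \to S_r$. Therefore it is an isomorphism of smooth principal $\mathrm{SO}(3)$-bundles, and by construction the induced map on frames is $V_r (v_r)_*$. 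There is no genuine obstacle beyond the orientation bookkeeping; the only points requiring attention are that convexity of $S_r$ forces $\det V_r > 0$ so that $V_r$ preserves orientation on each tangent plane, and that the warped-product metric on $W$ was defined precisely so that its $\mathrm{d} r^2$-splitting matches the equidistant splitting of $M$ under $V_r (v_r)_*$.
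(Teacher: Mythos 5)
Your proposal is correct and is essentially the paper's own argument: the paper's proof consists of the single line ``One can imitate the proof of Lemma~\ref{L:Ar},'' and you have carried out exactly that imitation, splitting off the normal direction via the equidistant/warped-product structure and summing with the leaf isometry \( V_r \circ (v_r)_* \). The extra orientation bookkeeping you supply (positive definiteness of \( V_r \) from convexity of \( S_r \)) is a correct filling-in of details the paper leaves implicit.
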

\begin{proof}
One can imitate the proof of Lemma~\ref{L:Ar}.
\end{proof}

\noindent The following diagram commutes for each \( r \in (-\varepsilon, \infty) \) (cf.~\eqref{E:projection-infinity}).
\begin{align}
\begin{tikzcd}[ampersand replacement=\&]
{\left. FM \right|}_{S_r} \ar[d, "A_r {\left( u_r \right)}_*"'] \& \& {\left. FW \right|}_{S_r^{\infty}} \ar[ll, "V_r {\left( v_r \right)}_*"'] \ar[d, "\boldsymbol{e}^r {\left( u_r^{\infty} \right)}_*"] \\
{\left. FM \right|}_S \& \& {\left. FW \right|}_{S^{\infty}} \ar[ll, "V v_*"']
\end{tikzcd}
\end{align}
Moreover, we have an isomorphism in the left commutative diagram of~\eqref{E:frame-at-infinity} between smooth principal \( \mathrm{SO}(3) \)-bundles (which is not necessarily an isometry). The \emph{frame at infinity} denoted by \( s^{\infty} = {\left( E_1^{\infty}, E_2^{\infty}, E_3^{\infty} \right)} \colon W \injto FW \) is the pullback of the smooth frame \( s = {\left( E_1, E_2, E_3 \right)} \colon M \injto FM \) via this isomorphism, which is given by completing the right commutative diagram of~\eqref{E:frame-at-infinity}.
\begin{align}
\begin{tikzcd}[ampersand replacement=\&]
{\left. FM \right|}_{(S_{-\varepsilon}, S_{\infty})} \ar[d, two heads] \& \& FW \ar[ll, "V_{\bullet} {\left( v_{\bullet} \right)}_*"'] \ar[d, two heads] \\
(S_{-\varepsilon}, S_{\infty}) \& \& W \ar[ll, "v_{\bullet}"']
\end{tikzcd} \qquad \begin{tikzcd}[ampersand replacement=\&]
{\left. FM \right|}_{(S_{-\varepsilon}, S_{\infty})} \& \& FW \ar[ll, "V_{\bullet} {\left( v_{\bullet} \right)}_*"'] \\
(S_{-\varepsilon}, S_{\infty}) \ar[u, hook, "s"] \& \& W \ar[ll, "v_{\bullet}"'] \ar[u, hook, dashed, "s^{\infty}"']
\end{tikzcd} \label{E:frame-at-infinity}
\end{align}
Since the smooth frame \( s \colon M \injto FM \) is constant along the foliation \( {\left\{ S_r \right\}}_{r \in (-\varepsilon, \infty)} \), it is immediate to see that the smooth frame \( s^{\infty} \colon W \injto FW \) at infinity makes the left diagram of~\eqref{E:constant-frame-at-infinity} commute for each \( r \in (-\varepsilon, \infty) \).
\begin{align}
\begin{tikzcd}[ampersand replacement=\&]
S_r^{\infty} \ar[rr, hook, "{\left. s^{\infty} \right|}_{S_r^{\infty}}"] \ar[d, "u_r^{\infty}"'] \& \& {\left. FW \right|}_{S_r^{\infty}} \ar[d, "\boldsymbol{e}^r {\left( u_r^{\infty} \right)}_*"] \\
S^{\infty} \ar[rr, hook, "{\left. s^{\infty} \right|}_{S^{\infty}}"] \& \& {\left. FW \right|}_{S^{\infty}}
\end{tikzcd} \qquad \begin{tikzcd}[ampersand replacement=\&]
R_r^{\infty} \ar[rr, hook, "{\left. \bar{s}^{\infty} \right|}_{R_r^{\infty}}"] \ar[d, "u_r^{\infty}"'] \& \& {\left. FW \right|}_{R_r^{\infty}} \ar[d, "\boldsymbol{e}^r {\left( u_r^{\infty} \right)}_*"] \\
R^{\infty} \ar[rr, hook, "{\left. \bar{s}^{\infty} \right|}_{R^{\infty}}"] \& \& {\left. FW \right|}_{R^{\infty}}
\end{tikzcd} \label{E:constant-frame-at-infinity}
\end{align}
In a similar manner, considering the restriction of the same pullback, we also have the smooth frame \( \bar{s}^{\infty} = {\left( \bar{E}_1^{\infty}, \bar{E}_2^{\infty}, \bar{E}_3^{\infty} \right)} \colon (R_{-\varepsilon}^{\infty}, R_{\infty}^{\infty}) \injto {\left. FW \right|}_{(R_{-\varepsilon}^{\infty}, R_{\infty}^{\infty})} \) at infinity corresponding to the smooth frame \( \bar{s} = {\left( \bar{E}_1, \bar{E}_2, \bar{E}_3 \right)} \colon (R_{-\varepsilon}, R_{\infty}) \injto {\left. FM \right|}_{(R_{-\varepsilon}, R_{\infty})} \), which makes the right diagram of~\eqref{E:constant-frame-at-infinity} commute. Here, \( \bar{E}_3 = \frac{\partial}{\partial r} \) and \( \bar{E}_3^{\infty} = \frac{\partial}{\partial r} \).

Consider the smooth function \( P \colon R \to \mathbb{R} \) in~\eqref{E:PQ}. Let \( \bar{\omega}_W \in \Omega^1 {\left( FW, \mathfrak{so}(3) \right)} \) be the Levi-Civita connection on \( FW \surjto W \).

\begin{plem} \label{L:P}
For each \( i \in \{ 1, 2 \} \),\/ \( \bar{\del}^W_{\frac{\partial}{\partial r}} \bar{E}_i^{\infty} = 0 \) on \( R^{\infty} \).
\end{plem}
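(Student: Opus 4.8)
The plan is to carry everything over to the warped product model and reduce the statement to a short Koszul computation. Recall from the paragraph preceding the lemma that $W = {\left( S_{-\varepsilon}^{\infty}, S_{\infty}^{\infty} \right)}$ with the metric $\mathrm{d} r^2 \oplus \I_r^{\infty}$ is isometric to $(-\varepsilon, \infty) \tilde{\times} S^{\infty}$ with $\mathrm{d} r^2 \oplus \boldsymbol{e}^{2r} \I^{\infty}$ via $(r, y) \mapsto {\left( u_r^{\infty} \right)}^{-1}(y)$, and this isometry carries $\frac{\partial}{\partial r}$ to $\frac{\partial}{\partial r}$. First I would transport $\bar{s}^{\infty}$ to this model and unwind the right-hand square of~\eqref{E:constant-frame-at-infinity}. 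Because $\I_r^{\infty} = \boldsymbol{e}^{2r} {\left( u_r^{\infty} \right)}^* \I^{\infty}$ by Proposition~\ref{P:conformal-equivalence}, the map $\boldsymbol{e}^r {\left( u_r^{\infty} \right)}_*$ becomes, in the product coordinates, the map $T_y {\left( \{ r \} \times S^{\infty} \right)} \to T_y {\left( \{ 0 \} \times S^{\infty} \right)}$ sending $v \mapsto \boldsymbol{e}^r v$ under the obvious identification of both spaces with $T_y S^{\infty}$. Constancy of $\bar{s}^{\infty}$ then reads $\bar{E}_i^{\infty}|_{(r, y)} = \boldsymbol{e}^{-r} X_i(y)$ for $i \in \{ 1, 2 \}$, where $X_i$ is the local vector field on $R^{\infty}$ given by $X_i = \bar{E}_i^{\infty}|_{R^{\infty}}$ and lifted to $(R_{-\varepsilon}^{\infty}, R_{\infty}^{\infty})$ so as to be independent of $r$, while $\bar{E}_3^{\infty} = \frac{\partial}{\partial r}$. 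In particular $\bar{E}_1^{\infty}, \bar{E}_2^{\infty}$ are tangent to the leaves $S_r^{\infty}$, and since $[\frac{\partial}{\partial r}, X_i] = 0$ for a lifted field we obtain $[\frac{\partial}{\partial r}, \bar{E}_i^{\infty}] = -\bar{E}_i^{\infty}$.

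Next I would compute $\bar{\del}^W_{\frac{\partial}{\partial r}} \bar{E}_i^{\infty}$ componentwise against the orthonormal frame $(\bar{E}_1^{\infty}, \bar{E}_2^{\infty}, \frac{\partial}{\partial r})$. Its $\frac{\partial}{\partial r}$-component is zero: the curves $r \mapsto (r, y)$ are unit-speed geodesics, so $\bar{\del}^W_{\frac{\partial}{\partial r}} \frac{\partial}{\partial r} = 0$, and hence $\I_W {\left( \bar{\del}^W_{\frac{\partial}{\partial r}} \bar{E}_i^{\infty}, \frac{\partial}{\partial r} \right)} = \frac{\partial}{\partial r} \I_W {\left( \bar{E}_i^{\infty}, \frac{\partial}{\partial r} \right)} = 0$. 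For the $\bar{E}_j^{\infty}$-components with $j \in \{ 1, 2 \}$, Koszul's formula, together with $\I_W {\left( \bar{E}_i^{\infty}, \bar{E}_j^{\infty} \right)} = \delta_{ij}$ and $\I_W {\left( \bar{E}_k^{\infty}, \frac{\partial}{\partial r} \right)} = 0$, reduces to
\begin{align*}
2 \I_W {\left( \bar{\del}^W_{\frac{\partial}{\partial r}} \bar{E}_i^{\infty}, \bar{E}_j^{\infty} \right)} & = \I_W {\left( {\left[ \tfrac{\partial}{\partial r}, \bar{E}_i^{\infty} \right]}, \bar{E}_j^{\infty} \right)} - \I_W {\left( {\left[ \tfrac{\partial}{\partial r}, \bar{E}_j^{\infty} \right]}, \bar{E}_i^{\infty} \right)} - \I_W {\left( {\left[ \bar{E}_i^{\infty}, \bar{E}_j^{\infty} \right]}, \tfrac{\partial}{\partial r} \right)}.
\end{align*}
The bracket $[\bar{E}_i^{\infty}, \bar{E}_j^{\infty}]$ is leaf-tangent, hence orthogonal to $\frac{\partial}{\partial r}$, killing the last term; substituting $[\frac{\partial}{\partial r}, \bar{E}_k^{\infty}] = -\bar{E}_k^{\infty}$ turns the first two terms into $-\delta_{ij}$ and $+\delta_{ij}$, which cancel. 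Thus $\bar{\del}^W_{\frac{\partial}{\partial r}} \bar{E}_i^{\infty} = 0$ on all of $W$, in particular on $R^{\infty}$.

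Alternatively, one can bypass Koszul's formula by invoking the standard warped-product connection identity: for a vector field $U$ tangent to the $S^{\infty}$-factor of $(-\varepsilon, \infty) \tilde{\times} S^{\infty}$, one has $\bar{\del}^W_{\frac{\partial}{\partial r}} U = {\left( \frac{\partial}{\partial r} \log \boldsymbol{e}^r \right)} U = U$, so that $\bar{\del}^W_{\frac{\partial}{\partial r}} \bar{E}_i^{\infty} = \bar{\del}^W_{\frac{\partial}{\partial r}} {\left( \boldsymbol{e}^{-r} X_i \right)} = - \boldsymbol{e}^{-r} X_i + \boldsymbol{e}^{-r} X_i = 0$. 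I expect the only delicate point to be the bookkeeping in the first paragraph: tracking the frame at infinity through the tower of identifications (the set-identities $v_r$, the projections $u_r^{\infty}$, the $V_r$-twist in~\eqref{E:frame-at-infinity}, and the warped-product isometry) so as to be sure the correct power of $\boldsymbol{e}^r$ appears, i.e.\ that $\bar{E}_i^{\infty}$ equals $\boldsymbol{e}^{-r}$ — and not $\boldsymbol{e}^{r}$ — times an $r$-independent leaf field. Once that normalization is fixed, both of the above computations are immediate.
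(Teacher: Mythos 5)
Your proposal is correct and takes essentially the same approach as the paper: pass to the warped product model, use the constancy condition (via the right-hand square of~\eqref{E:constant-frame-at-infinity}) to get the normalization \( \bar{E}_i^{\infty} = \boldsymbol{e}^{-r} X_i \) with \( X_i \) an \( r \)-independent leaf field, and then conclude by a short Levi-Civita computation. In fact your ``alternative'' third paragraph is precisely the paper's argument (the paper derives \( \bar{\del}^W_{\frac{\partial}{\partial r}} \frac{\partial}{\partial y^j} = \frac{\partial}{\partial y^j} \) from Koszul's formula and applies the Leibniz rule), and your componentwise Koszul computation in the second paragraph is a correct, equivalent variant.
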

\begin{proof}
Consider the warped product model \( (-\varepsilon, \infty) \tilde{\times} S^{\infty} \) for \( W \). Fix \( i \in \{ 1, 2 \} \). Fix a point \( y \) in the interior of \( R^{\infty} \) and consider locally near \( y \). Let \( {\left( y^1, y^2 \right)} \colon U \to \mathbb{R}^2 \) be a normal local chart on \( R^{\infty} \) at \( y \). Then \( {\left( r, y^1, y^2 \right)} \colon (-\varepsilon, \infty) \times U \to \mathbb{R}^3 \) is a local chart on \( W \) at \( y \). Say
\begin{align*}
\bar{E}_i^{\infty} & = f^1 \frac{\partial}{\partial y^1} + f^2 \frac{\partial}{\partial y^2} \quad \text{on} \ (-\varepsilon, \infty) \times U.
\end{align*}
For each \( r \in (-\varepsilon, \infty) \), since \( {\left. \bar{E}_i^{\infty} \right|}_{\{ 0 \} \times U} = \boldsymbol{e}^r {\left( u_r^{\infty} \right)}_* {\left. \bar{E}_i^{\infty} \right|}_{\{ r \} \times U} \), each \( f^j \) satisfies \( f^j {\left( r, {}\cdot{} \right)} = \boldsymbol{e}^{-r} f^j {\left( 0, {}\cdot{} \right)} \). Also, Koszul's formula yields
\begin{align*}
{\left. \bar{\del}^W_{\frac{\partial}{\partial r}} \frac{\partial}{\partial y^j} \right|}_y & = {\left. \frac{\partial}{\partial y^j} \right|}_y
\end{align*}
for each \( j \in \{ 1, 2 \} \). These two imply that \( \bar{\del}^W_{\frac{\partial}{\partial r}} \bar{E}_i^{\infty} = 0 \) at \( y \) by the Leibniz rule.
\end{proof}

Fix \( x \in R \) and let \( y = v^{-1}(x) \in R^{\infty} \). On the one hand, Lemma~\ref{L:P} gives
\begin{align*}
{\left. {\left( (\bar{s}^{\infty})^* \bar{\omega}_W \right)}^1_2 {\left( \frac{\partial}{\partial r} \right)} \right|}_y & = \I_W {\left( {\left. \bar{E}_1^{\infty} \right|}_y, {\left. \bar{\del}^W_{\frac{\partial}{\partial r}} \bar{E}_2^{\infty} \right|}_y \right)} = \I_W {\left( {\left. \bar{E}_1^{\infty} \right|}_y, 0 \right)} = 0.
\end{align*}
On the other hand, Koszul's formula gives
\begin{align*}
& {\left. {\left( (\bar{s}^{\infty})^* \bar{\omega}_W \right)}^1_2 {\left( \frac{\partial}{\partial r} \right)} \right|}_y = \I_W {\left( {\left. \bar{E}_1^{\infty} \right|}_y, {\left. \bar{\del}^W_{\frac{\partial}{\partial r}} \bar{E}_2^{\infty} \right|}_y \right)} \\
& \qquad = \frac{1}{2} {\left( \I_W {\left( {\left. \bar{E}_2^{\infty} \right|}_y, {\left. {\left[ \bar{E}_1^{\infty}, \frac{\partial}{\partial r} \right]} \right|}_y \right)} - \I_W {\left( {\left. \bar{E}_1^{\infty} \right|}_y, {\left. {\left[ \bar{E}_2^{\infty}, \frac{\partial}{\partial r} \right]} \right|}_y \right)} \right)} \\
& \qquad = \frac{1}{2} {\left( \I^{\infty} {\left( {\left. \bar{E}_2^{\infty} \right|}_y, {\left. {\left[ \bar{E}_1^{\infty}, \frac{\partial}{\partial r} \right]} \right|}_y^{\top} \right)} - \I^{\infty} {\left( {\left. \bar{E}_1^{\infty} \right|}_y, {\left. {\left[ \bar{E}_2^{\infty}, \frac{\partial}{\partial r} \right]} \right|}_y^{\top} \right)} \right)},
\end{align*}
where the superscript \( \top \) denotes the projection of vector onto the leaf surface. Here, for each \( (i,j) \in \{ (1,2), (2,1) \} \), Lemma~\ref{L:Lie-bracket-infinity} gives
\begin{align*}
& \I^{\infty} {\left( {\left. \bar{E}_i^{\infty} \right|}_y, {\left. {\left[ \bar{E}_j^{\infty}, \frac{\partial}{\partial r} \right]} \right|}_y^{\top} \right)} = \I {\left( {\left. \bar{E}_i \right|}_x, V v_* {\left. {\left[ \bar{E}_j^{\infty}, \frac{\partial}{\partial r} \right]} \right|}_y^{\top} \right)} \\
& \qquad = \I {\left( {\left. \bar{E}_i \right|}_x, {\left( V|_x^{-1} \right)}^{\ell}_j \, V {\left. {\left[ \bar{E}_{\ell}, \frac{\partial}{\partial r} \right]} \right|}_x^{\top} \right)} + {\left( {\left. (I+B) \right|}_x \right)}^i_j \\
& \qquad = {\left( V|_x^{-1} \right)}^k_i {\left( V|_x^{-1} \right)}^{\ell}_j \, \I {\left( V {\left. \bar{E}_k \right|}_x, V {\left. {\left[ \bar{E}_{\ell}, \frac{\partial}{\partial r} \right]} \right|}_x^{\top} \right)} + {\left( {\left. (I+B) \right|}_x \right)}^i_j,
\end{align*}
where the Einstein summation convention is assumed. We obtain
\begin{align*}
& {\left. {\left( (\bar{s}^{\infty})^* \bar{\omega}_W \right)}^1_2 {\left( \frac{\partial}{\partial r} \right)} \right|}_y = \frac{1}{2} {\left( \begin{aligned}
& {\left( V|_x^{-1} \right)}^k_2 {\left( V|_x^{-1} \right)}^{\ell}_1 \\
& - {\left( V|_x^{-1} \right)}^k_1 {\left( V|_x^{-1} \right)}^{\ell}_2
\end{aligned} \right)} \I {\left( V {\left. \bar{E}_k \right|}_x, V {\left. {\left[ \bar{E}_{\ell}, \frac{\partial}{\partial r} \right]} \right|}_x^{\top} \right)} \\
& \qquad = \frac{1}{2 \det {\left. V \right|}_x} {\left( \I {\left( V {\left. \bar{E}_2 \right|}_x, V {\left. {\left[ \bar{E}_1, \frac{\partial}{\partial r} \right]} \right|}_x^{\top} \right)} - \I {\left( V {\left. \bar{E}_1 \right|}_x, V {\left. {\left[ \bar{E}_2, \frac{\partial}{\partial r} \right]} \right|}_x^{\top} \right)} \right)}.
\end{align*}
This is exactly \( {\left. P \right|}_x \) in~\eqref{E:P}. Therefore, we find that \( P = 0 \) in~\eqref{E:PQ}.

Consider the smooth function \( Q \colon R \to \mathbb{R} \) in~\eqref{E:PQ}. Let \( \mathrm{d} a^{\infty} \) be the area form on \( S^{\infty} \), and let \( g^{\infty} \colon (R_{-\varepsilon}^{\infty}, R_{\infty}^{\infty}) \to \mathrm{SO}(3) \) be the smooth map given by \( s^{\infty} = \bar{s}^{\infty} \cdot g^{\infty} \). This map coincides with the smooth map \( g \colon (R_{-\varepsilon}, R_{\infty}) \to \mathrm{SO}(3) \) via the map \( v \colon S^{\infty} \to S \) in the following sense.

\begin{plem} \label{L:Q}
\( g^{\infty} = g \circ v \) on \( R^{\infty} \).
\end{plem}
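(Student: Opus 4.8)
The plan is to exploit the fact that, by construction, both $s^{\infty}$ and $\bar{s}^{\infty}$ are obtained from $s$ and $\bar{s}$ by pulling back along the \emph{same} isomorphism $V_{\bullet} {\left( v_{\bullet} \right)}_* \colon FW \to {\left. FM \right|}_{(S_{-\varepsilon}, S_{\infty})}$ of principal $\mathrm{SO}(3)$-bundles, together with the elementary fact that a morphism of principal bundles intertwines the structure-group actions. The whole argument is then a single equivariance computation at level $r = 0$.

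First I would fix a point $y \in R^{\infty}$ (the level-$r = 0$ leaf) and set $x = v(y) \in R$. Reading off the right-hand commutative diagram of~\eqref{E:frame-at-infinity} and its restriction to the $R$-leaves, the defining property of the frames at infinity at level $r = 0$ is $V v_* {\left( {\left. s^{\infty} \right|}_y \right)} = {\left. s \right|}_x$ and $V v_* {\left( {\left. \bar{s}^{\infty} \right|}_y \right)} = {\left. \bar{s} \right|}_x$, with the base-point correspondence $x = v(y)$ supplied by the commuting square~\eqref{E:projection-infinity}. Applying $V v_*$ to the relation ${\left. s^{\infty} \right|}_y = {\left. \bar{s}^{\infty} \right|}_y \cdot {\left. g^{\infty} \right|}_y$ and using that $V v_*$ commutes with the right $\mathrm{SO}(3)$-action, I get
\begin{align*}
{\left. s \right|}_x = V v_* {\left( {\left. s^{\infty} \right|}_y \right)} = V v_* {\left( {\left. \bar{s}^{\infty} \right|}_y \right)} \cdot {\left. g^{\infty} \right|}_y = {\left. \bar{s} \right|}_x \cdot {\left. g^{\infty} \right|}_y.
\end{align*}
On the other hand ${\left. s \right|}_x = {\left. \bar{s} \right|}_x \cdot {\left. g \right|}_x$ by the definition of $g$. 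Since the right action of $\mathrm{SO}(3)$ on the frame bundle $FM$ is free, comparing the two expressions forces ${\left. g^{\infty} \right|}_y = {\left. g \right|}_x = {\left. g \right|}_{v(y)}$. As $y \in R^{\infty}$ was arbitrary, this is exactly the asserted identity $g^{\infty} = g \circ v$ on $R^{\infty}$.

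I do not expect a genuine obstacle; the only care needed is bookkeeping. One must verify that the bundle isomorphism used to define $s^{\infty}$ in~\eqref{E:frame-at-infinity} is literally the same one (suitably restricted) that defines $\bar{s}^{\infty}$, so that a single invocation of equivariance disposes of both frames simultaneously, and one must thread the identifications $v$, $v_r$, $u_r^{\infty}$ correctly through the diagrams~\eqref{E:projection-infinity}, \eqref{E:frame-at-infinity}, and~\eqref{E:constant-frame-at-infinity}. If preferred, one can instead run the same computation at an arbitrary level $r$ to obtain ${\left. g^{\infty} \right|}_y = {\left. g \right|}_{v_r(y)}$ for $y \in R_r^{\infty}$ and then combine with Lemma~\ref{L:constant-frame-gauge-transformation} (which gives $g = {\left. g \right|}_R \circ u$ and the analogous statement for $g^{\infty}$); but the level-zero equivariance argument above is the most economical.
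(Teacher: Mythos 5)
Your proposal is correct and is essentially the paper's own proof: the paper likewise fixes \( y \in R^{\infty} \), sets \( x = v(y) \), and compares \( {\left. s^{\infty} \right|}_y = {\left. \bar{s}^{\infty} \right|}_y \cdot {\left. g^{\infty} \right|}_y \) with the image of \( {\left. s \right|}_x = {\left. \bar{s} \right|}_x \cdot {\left. g \right|}_x \) under the equivariant bundle isomorphism \( V v_* \), exactly as you do (the paper phrases it as "imitate the proof of Lemma~\ref{L:constant-frame-gauge-transformation}"). Your version is, if anything, slightly more careful about the direction of the arrow \( V_{\bullet} {\left( v_{\bullet} \right)}_* \) in~\eqref{E:frame-at-infinity}.
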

\begin{proof}
We imitate the proof of Lemma~\ref{L:constant-frame-gauge-transformation}. Fix \( y \in R^{\infty} \) and let \( x = v(y) \in R \). It suffices to show that \( {\left. g^{\infty} \right|}_y = {\left. g \right|}_x \). We have \( {\left. s^{\infty} \right|}_y = {\left. \bar{s}^{\infty} \right|}_y \cdot {\left. g^{\infty} \right|}_y \) and
\begin{align*}
{\left. s^{\infty} \right|}_y & = V v_* {\left( {\left. s \right|}_x \right)} = V v_* {\left( {\left. \bar{s} \right|}_x \cdot {\left. g \right|}_x \right)} = V v_* {\left( {\left. \bar{s} \right|}_x \right)} \cdot {\left. g \right|}_x = {\left. \bar{s}^{\infty} \right|}_y \cdot {\left. g \right|}_x.
\end{align*}
This completes the proof.
\end{proof}

Fix \( x \in R \) and let \( y = v^{-1}(x) \in R^{\infty} \). By~\eqref{E:Q} and Lemma~\ref{L:Q}, we have
\begin{align*}
{\left. v^* {\left( Q \, \mathrm{d} a \right)} \right|}_y & = \frac{\det {\left. V \right|}_x}{\sqrt{2}} \sum_{i = 1}^2 {\left\langle \Ad_{g(x)^{-1}} L_i, g^* \mu {\left( V^{-1} {\left. \bar{E}_i \right|}_x \right)} \right\rangle} {\left. {\left( v^* \mathrm{d} a \right)} \right|}_y \\
& = \frac{1}{\sqrt{2}} \sum_{i = 1}^2 {\left\langle \Ad_{g^{\infty}(y)^{-1}} L_i, {\left( g^{\infty} \right)}^* \mu {\left( {\left. \bar{E}_i^{\infty} \right|}_y \right)} \right\rangle} \, {\left. \mathrm{d} a^{\infty} \right|}_y.
\end{align*}
Here, by \( \Ad \)-invariance and~\eqref{E:Ad-invariant-form}, we have
\begin{align*}
& \frac{1}{\sqrt{2}} \sum_{i = 1}^2 {\left\langle \Ad_{{\left( g^{\infty} \right)}^{-1}} L_i, {\left( g^{\infty} \right)}^* \mu {\left( \bar{E}_i^{\infty} \right)} \right\rangle} = - \frac{1}{\sqrt{2}} \sum_{i = 1}^2 {\left\langle L_i, g^{\infty} \, \mathrm{d} {\left( g^{\infty} \right)}^{-1} {\left( \bar{E}_i^{\infty} \right)} \right\rangle} \\
& \qquad = \frac{1}{4 \sqrt{2} \boldsymbol{\pi}^2} \sum_{j = 1}^3 {\left( {\left( g^{\infty} \right)}^2_j \, \mathrm{d} {\left( g^{\infty} \right)}^3_j {\left( \bar{E}_1^{\infty} \right)} + {\left( g^{\infty} \right)}^3_j \, \mathrm{d} {\left( g^{\infty} \right)}^1_j {\left( \bar{E}_2^{\infty} \right)} \right)} \\
& \qquad = \frac{1}{4 \sqrt{2} \boldsymbol{\pi}^2} \sum_{j = 1}^3 {\left( \mathrm{d} {\left( g^{\infty} \right)}^1_j {\left( \bar{E}_2^{\infty} \right)} - \mathrm{d} {\left( g^{\infty} \right)}^2_j {\left( \bar{E}_1^{\infty} \right)} \right)} {\left( g^{\infty} \right)}^3_j \\
& \qquad = \frac{1}{4 \sqrt{2} \boldsymbol{\pi}^2} \sum_{j = 1}^3 {\left( \bar{E}_2^{\infty} {\left( g^{\infty} \right)}^1_j - \bar{E}_1^{\infty} {\left( g^{\infty} \right)}^2_j \right)} \, \I_W {\left( \frac{\partial}{\partial r}, E_j^{\infty} \right)}.
\end{align*}
This can be further simplified by the following formula.

\begin{plem}
We have
\begin{align*}
\sum_{j = 1}^3 {\left( \bar{E}_2^{\infty} {\left( g^{\infty} \right)}^1_j - \bar{E}_1^{\infty} {\left( g^{\infty} \right)}^2_j \right)} E_j^{\infty} & = {\left[ \bar{E}_2^{\infty}, \bar{E}_1^{\infty} \right]} + \sum_{\mathrm{cyc}} \I_W {\left( \frac{\partial}{\partial r}, E_i^{\infty} \right)} {\left[ E_j^{\infty}, E_k^{\infty} \right]}.
\end{align*}
\end{plem}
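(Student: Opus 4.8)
The plan is to turn this into a purely algebraic manipulation of the transition matrix \( g^{\infty} \), with no connection involved. Throughout I work on the region of \( W \) carrying \( \bar{s}^{\infty} \), abbreviating \( E_j := E_j^{\infty} \), \( \bar{E}_i := \bar{E}_i^{\infty} \), \( g := g^{\infty} \), and using the Einstein summation convention. First I would record the elementary consequences of \( s^{\infty} = \bar{s}^{\infty} \cdot g^{\infty} \): this reads \( E_j = g^i_j \bar{E}_i \), and since \( g \) takes values in \( \mathrm{SO}(3) \), inverting gives \( \bar{E}_i = g^i_j E_j \); in particular, because \( \bar{E}_3 = \frac{\partial}{\partial r} \) and \( \bar{s}^{\infty} \) is orthonormal, \( \I_W {\left( \frac{\partial}{\partial r}, E_i \right)} = g^3_i \).

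Next I would expand \( [\bar{E}_2, \bar{E}_1] = [g^2_j E_j,\, g^1_k E_k] \) by the Leibniz rule \( [fX, hY] = fh[X,Y] + f(Xh)Y - h(Yf)X \). Summing over \( j \) (resp. \( k \)) and using \( g^2_j E_j = \bar{E}_2 \), \( g^1_j E_j = \bar{E}_1 \), the two derivative terms collapse, after relabelling one dummy index, into \( {\left( \bar{E}_2 g^1_j - \bar{E}_1 g^2_j \right)} E_j \), which is precisely the left-hand side of the asserted identity. Hence
\[
{\left( \bar{E}_2 g^1_j - \bar{E}_1 g^2_j \right)} E_j = [\bar{E}_2, \bar{E}_1] - g^2_j g^1_k [E_j, E_k].
\]
It then remains to rewrite the bracket term. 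Since \( [E_j, E_k] \) is antisymmetric in \( j, k \), only the antisymmetric part \( \frac{1}{2} {\left( g^2_j g^1_k - g^1_j g^2_k \right)} \) of the coefficient contributes; invoking the \( \mathrm{SO}(3) \)-identity that the third row of \( g \) is the cross product of the first two, i.e. \( g^1_j g^2_k - g^1_k g^2_j = \varepsilon_{jki}\, g^3_i \) with \( \varepsilon \) the Levi-Civita symbol, converts \( g^2_j g^1_k [E_j, E_k] \) into \( -\sum_{\mathrm{cyc}} g^3_i [E_j, E_k] \). Substituting this back and using \( g^3_i = \I_W {\left( \frac{\partial}{\partial r}, E_i \right)} \) from the first paragraph yields the claimed formula.

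I do not expect a real obstacle here; the only thing needing care is index bookkeeping and the sign conventions — specifically the convention \( s = \bar{s} \cdot g \) for the transition matrix, the placement of upper versus lower indices in \( g^i_j \), the orientation that makes the third row of \( g^{\infty} \in \mathrm{SO}(3) \) the cross product of the first two, and the meaning of \( \sum_{\mathrm{cyc}} \) fixed in the conventions. Everything else is a short direct computation.
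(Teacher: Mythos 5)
Your proposal is correct and follows essentially the same route as the paper: expand \( {\left[ \bar{E}_2^{\infty}, \bar{E}_1^{\infty} \right]} \) by the Leibniz rule in the frame \( {\left( E_j^{\infty} \right)} \), recognize the derivative terms as the left-hand side, and convert the remaining bracket coefficients via the \( \mathrm{SO}(3) \) cofactor (cross-product) identity into \( \I_W {\left( \frac{\partial}{\partial r}, E_i^{\infty} \right)} \). The only cosmetic difference is that the paper phrases the computation in terms of \( \bar{g} = {\left( g^{\infty} \right)}^{-1} \) while you use \( g^{\infty} \) directly together with \( {\left( g^{\infty} \right)}^{-1} = {\left( g^{\infty} \right)}^{T} \), which is equivalent.
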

\begin{proof}
Only in this proof, let \( \bar{g} = {\left( g^{\infty} \right)}^{-1} \) for convenience. We have
\begin{align*}
{\left[ \bar{E}_2^{\infty}, \bar{E}_1^{\infty} \right]} & = \sum_{i, j = 1}^3 \bar{g}^i_2 \bar{g}^j_1 {\left[ E_i^{\infty}, E_j^{\infty} \right]} + \sum_{j = 1}^3 {\left( \bar{E}_2^{\infty} {\big( \bar{g}^j_1 \big)} - \bar{E}_1^{\infty} {\big( \bar{g}^j_2 \big)} \right)} E_j^{\infty}.
\end{align*}
Here, since \( \bar{g} \) is \( \mathrm{SO}(3) \)-valued,
\begin{align*}
\sum_{i, j = 1}^3 \bar{g}^i_2 \bar{g}^j_1 {\left[ E_i^{\infty}, E_j^{\infty} \right]} & = \sum_{\substack{i, j, k \\ \mathrm{cyc}}} {\left( \bar{g}^i_2 \bar{g}^j_1 - \bar{g}^i_1 \bar{g}^j_2 \right)} {\left[ E_i^{\infty}, E_j^{\infty} \right]} = \sum_{\mathrm{cyc}} - \bar{g}^k_3 {\left[ E_i^{\infty}, E_j^{\infty} \right]}.
\end{align*}
This completes the proof.
\end{proof}

\noindent Combining these observations yield
\begin{align}
\begin{aligned}
\int_R Q \, \mathrm{d} a & = \int_{R^{\infty}} v^* {\left( Q \, \mathrm{d} a \right)} \\
& = \frac{1}{4 \sqrt{2} \boldsymbol{\pi}^2} \int_{R^{\infty}} \sum_{\mathrm{cyc}} \I_W {\left( \frac{\partial}{\partial r}, E_i^{\infty} \right)} \, \I_W {\left( \frac{\partial}{\partial r}, {\left[ E_j^{\infty}, E_k^{\infty} \right]} \right)} \, \mathrm{d} a^{\infty}.
\end{aligned}
\end{align}
Note that the last expression does not involve any dependency on the choice of the smooth frame \( \bar{s} \). Also, it is fully determined by the data from infinity.

Consequently, we find that~\eqref{E:PQ} becomes
\begin{align*}
\int_{[R, R_{\rho}]} s^* \cs {\left( \bar{\omega} \right)} & = \begin{aligned}[t]
& \frac{\boldsymbol{e}^{\rho}}{4 \sqrt{2} \boldsymbol{\pi}^2} \int_{R^{\infty}} \sum_{\mathrm{cyc}} \I_W {\left( \frac{\partial}{\partial r}, E_i^{\infty} \right)} \, \I_W {\left( \frac{\partial}{\partial r}, {\left[ E_j^{\infty}, E_k^{\infty} \right]} \right)} \, \mathrm{d} a^{\infty} \\
& + (\textnormal{convergent terms})
\end{aligned}
\end{align*}
as \( \rho \to \infty \), and we take the summation over \( R \) to conclude that the asymptotics of~\eqref{E:asymptotics} is given as follows. As \( \rho \to \infty \),
\begin{align}
\begin{aligned}
\int_{[S, S_{\rho}]} s^* \cs {\left( \bar{\omega} \right)} & = \begin{aligned}[t]
& \frac{\boldsymbol{e}^{\rho}}{4 \sqrt{2} \boldsymbol{\pi}^2} \int_{S^{\infty}} \sum_{\mathrm{cyc}} \I_W {\left( \frac{\partial}{\partial r}, E_i^{\infty} \right)} \, \I_W {\left( \frac{\partial}{\partial r}, {\left[ E_j^{\infty}, E_k^{\infty} \right]} \right)} \, \mathrm{d} a^{\infty} \\
& + (\textnormal{convergent terms}).
\end{aligned}
\end{aligned} \label{E:asymptotics-result}
\end{align}
We reached the desired asymptotics, but still the geometric meaning of the divergent term is vague. It will be clarified in Section~\ref{S:connections}.

\section{Connections and hypersurfaces} \label{S:connections}

In order to provide the geometric meaning of the divergent term in~\eqref{E:asymptotics-result}, we need to study hypersurfaces in Weitzenb{\"o}ck geometry. For this purpose, Section~\ref{S:connections} consists of two parts: Weitzenb{\"o}ck geometry in Subsection~\ref{SS:Weitzenbock}, and its hypersurfaces in Subsection~\ref{SS:hypersurface}. Elucidating the geometric meaning of the exponentially divergent term in~\eqref{E:asymptotics-result}, we will complete the proof of Theorem~\ref{M:A} at the end of this section.

\subsection{Weitzenb{\"o}ck connection} \label{SS:Weitzenbock}

A \emph{connection of the parallelization}, also known as a \emph{Weitzenb{\"o}ck connection}, is a nonholonomic flat connection determined by a fixed global smooth frame. It has \emph{nonzero torsion} and \emph{zero curvature}, while the Levi-Civita connection has \emph{zero torsion} and \emph{nonzero curvature}. The following is a modern definition given in Section~5.7 of~\cite{MR0615912}.

\begin{pdef} \label{D:Weitzenbock-affine}
Let \( M \) be a parallelizable smooth \( n \)-manifold with or without boundary. The \emph{Weitzenb{\"o}ck connection} on \( M \) induced by a global smooth frame \( s = {\left( E_1, \dotsc, E_n \right)} \) for \( M \) is the affine connection \( \del^s \colon \Gamma(TM) \times \Gamma(TM) \to \Gamma(TM) \) on \( M \) defined by
\begin{align}
\del^s_X Y & = \sum_{i = 1}^n X {\left( Y^i \right)} E_i,
\end{align}
where \( X, Y \in \Gamma(TM) \) and \( Y = Y^1 E_1 + \dotsb + Y^n E_n \).
\end{pdef}

\noindent It is easy to see that a Weitzenb{\"o}ck connection on a parallelizable Riemannian manifold is metric-compatible if it is induced by a global orthonormal smooth frame. The Weitzenb{\"o}ck connection on \( \mathbb{R}^n \) induced by the standard global smooth frame \( {\big( \frac{\partial}{\partial x^1}, \dotsc, \frac{\partial}{\partial x^n} \big)} \) coincides with the Levi-Civita connection.

From the perspective of principal bundles, a Weitzenb{\"o}ck connection can be understood as the choice of a trivial connection in the following sense.

\begin{pdef}
Let \( G \) be a Lie group. The \emph{trivial principal \( G \)-bundle} over a smooth manifold \( M \) with or without boundary is defined by the projection \( M \times G \surjto M \) with the smooth right group action \( (x, g) \cdot h = (x, gh) \), where \( (x, g) \in M \times G \) and \( h \in G \). The \emph{trivial connection} on this is the pullback of the Maurer-Cartan \( 1 \)-form on \( G \) via the projection \( M \times G \surjto G \).
\end{pdef}

\begin{pdef} \label{D:Weitzenbock-principal}
Let \( G \) be a Lie group. Let \( \pi \colon P \surjto M \) be a smooth principal \( G \)-bundle. Suppose that it has a global smooth section \( \sigma \colon M \injto P \). Then it induces a global trivialization \( \varphi_{\sigma} \colon P \to M \times G \) defined by \( \varphi_{\sigma}^{-1}(x, g) = \sigma(x) \cdot g \).
\begin{align}
\begin{tikzcd}[ampersand replacement=\&]
P \ar[rr, "\varphi_{\sigma}"] \ar[rd, two heads, "\pi"'] \& \& M \times G \ar[ld, two heads] \\
\& M
\end{tikzcd}
\end{align}
The \emph{Weitzenb{\"o}ck connection} on \( \pi \colon P \surjto M \) induced by \( \sigma \) is the principal connection \( \omega^{\sigma} \in \Omega^1 {\left( P, \mathfrak{g} \right)} \) on \( \pi \colon P \surjto M \) defined by the pullback of the trivial connection on the trivial principal \( G \)-bundle \( M \times G \surjto M \) via \( \varphi_{\sigma} \colon P \to M \times G \).
\end{pdef}

\noindent Weitzenb{\"o}ck connections of Definition~\ref{D:Weitzenbock-principal} are flat (i.e., \( \Omega^{\sigma} = 0 \)) as well, by the Maurer-Cartan equation. In fact, it is easy to check that \( \sigma^* \omega^{\sigma} = 0 \).

Definitions~\ref{D:Weitzenbock-affine} and~\ref{D:Weitzenbock-principal} indeed coincide in the following sense.

\begin{pprop}
Let \( M \) be an oriented parallelizable Riemannian \( n \)-manifold with or without boundary. Let \( s \) be a global oriented orthonormal smooth frame for \( M \). Then the Weitzenb{\"o}ck affine connection\/ \( \del^s \) induced by \( s \) and the Weitzenb{\"o}ck principal connection \( \omega^s \) induced by \( s \) coincide.
\end{pprop}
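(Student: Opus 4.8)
The plan is to run through the standard dictionary between principal connections on the oriented orthonormal frame bundle \( FM \surjto M \) and metric- and orientation-compatible affine connections on \( M \), and to check that both Weitzenb{\"o}ck connections sit on the same side of that dictionary, namely the one for which the global frame \( s \) is parallel. Recall that a principal connection \( \omega \in \Omega^1 {\left( FM, \mathfrak{so}(n) \right)} \) induces an affine connection \( \del^{\omega} \) on \( M \) characterized, relative to any local oriented orthonormal frame \( \sigma = {\left( e_1, \dotsc, e_n \right)} \), by \( \del^{\omega}_X e_j = \sum_i {\left( \sigma^* \omega \right)}^i_j(X) \, e_i \) (this is exactly the convention used for the Levi-Civita connection in Subsection~\ref{SS:I}); the right-hand side is independent of the choice of \( \sigma \), the resulting \( \del^{\omega} \) is metric-compatible, and \( \omega \mapsto \del^{\omega} \) is a bijection onto the metric- and orientation-compatible affine connections on \( M \). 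Under this bijection, the statement to be proved is precisely that \( \del^{\omega^s} = \del^s \).

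First I would compute \( s^* \omega^s \). By Definition~\ref{D:Weitzenbock-principal}, \( \omega^s = \varphi_s^* {\left( \mathrm{pr}_{\mathrm{SO}(n)}^* \mu \right)} \), where \( \varphi_s \colon FM \to M \times \mathrm{SO}(n) \) is the trivialization with \( \varphi_s^{-1}(x, g) = s(x) \cdot g \) and \( \mu \) is the Maurer-Cartan \( 1 \)-form. Then \( \varphi_s \circ s \colon M \to M \times \mathrm{SO}(n) \) is the map \( x \mapsto (x, I) \), so \( \mathrm{pr}_{\mathrm{SO}(n)} \circ \varphi_s \circ s \) is the constant map to the identity, whence \( s^* \omega^s = {\left( \mathrm{pr}_{\mathrm{SO}(n)} \circ \varphi_s \circ s \right)}^* \mu = 0 \), which is the remark \( \sigma^* \omega^{\sigma} = 0 \) recorded above.

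Next I would read off \( \del^{\omega^s} \) and \( \del^s \) on the frame \( s = {\left( E_1, \dotsc, E_n \right)} \). From the previous paragraph, \( \del^{\omega^s}_X E_j = \sum_i {\left( s^* \omega^s \right)}^i_j(X) \, E_i = 0 \) for every vector field \( X \) and every \( j \). On the other hand, writing \( E_j = \sum_i \delta^i_j E_i \), Definition~\ref{D:Weitzenbock-affine} gives \( \del^s_X E_j = \sum_i X {\left( \delta^i_j \right)} \, E_i = 0 \) as well. Since any affine connection is determined by its values on a global frame (by \( \mathbb{R} \)-linearity, the Leibniz rule, and \( C^{\infty}(M) \)-linearity in the direction slot), it follows that \( \del^{\omega^s} = \del^s \); by the uniqueness half of the correspondence in the first paragraph, \( \omega^s \) is the principal connection associated to \( \del^s \), which is the assertion.

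The main obstacle is not in these computations, which are immediate, but in setting up the first paragraph cleanly: one must check that \( \del^{\omega} \) is well defined independently of the auxiliary local frame, and that \( \del^s \) really descends to a connection on the \emph{oriented orthonormal} frame bundle rather than merely on the \( \mathrm{GL}_n \)-frame bundle. The latter holds because \( s \) is an oriented orthonormal frame that is \( \del^s \)-parallel, so \( \del^s \) is metric-compatible and its parallel transport carries \( s \) to \( s \), hence preserves the orientation and has holonomy contained in \( \mathrm{SO}(n) \). Both points are standard, so no serious difficulty is expected.
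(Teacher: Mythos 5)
Your proposal is correct and follows essentially the same route as the paper: both arguments reduce to observing that \( s^* \omega^s = 0 \) and that the connection coefficients of \( \del^s \) with respect to \( s \) vanish, so the two connections agree under the standard correspondence. You simply spell out the details (the trivialization computation and the well-definedness of the dictionary) that the paper leaves as "straightforward."
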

\begin{proof}
It is straightforward to check that \( s^* \omega^s = 0 \). Indeed, the connection coefficients of \( \del^s \) with respect to \( s \) are all zero, since \( \del^s_X E_j = 0 \) for any \( X \in \Gamma(TM) \) and any \( j \in \{ 1, \dotsc, n \} \), where \( s = {\left( E_1, \dotsc, E_n \right)} \). This means that they coincide.
\end{proof}

There are two remarks regarding Weitzenb{\"o}ck connection.

\begin{prmrk} \label{R:flat-connection}
A connection is flat iff it is locally Weitzenb{\"o}ck. The difference between Weitzenb{\"o}ck connections and flat connections is \emph{holonomy}. Weitzenb{\"o}ck connections are globally nonholonomic, while flat connections can be holonomic.
\end{prmrk}

\begin{prmrk}
All Weitzenb{\"o}ck connections are \emph{gauge-equivalent} to each other. In the setup of Definition~\ref{D:Weitzenbock-principal}, suppose that \( \sigma, \sigma' \colon M \injto P \) are two global smooth sections. Then \( \omega^{\sigma'} = \varphi^* \omega^{\sigma} \) by the gauge transformation \( \varphi = \varphi_{\sigma}^{-1} \circ \varphi_{\sigma'} \colon P \isoto P \).
\end{prmrk}

\subsection{Hypersurfaces in Riemann-Cartan geometry} \label{SS:hypersurface}

A \emph{Riemann-Cartan manifold} proposed and studied by Cartan~\cites{MR1509253, MR1509255, MR1509263} is a Riemannian manifold endowed with a metric-compatible connection, which is not necessarily torsion-free. An important example is a parallelizable Riemannian manifold equipped with the Weitzenb{\"o}ck connection induced by a global orthonormal smooth frame. A Riemann-Cartan manifold \( {\left( M, \I_M, \del \right)} \) has a nontrivial \emph{torsion}, which is the smooth \( (1,2) \)-tensor field \( T^{\del} \) on \( M \) defined by
\begin{align}
T^{\del}(X, Y) & = \del_X Y - \del_Y X - [X, Y] \qquad (X, Y \in \Gamma(TM)),
\end{align}
unless \( \del \) is the Levi-Civita connection. Despite this nontrivial torsion, it is possible to naturally extend the framework of hypersurfaces in Riemannian geometry to Riemann-Cartan geometry. We give a brief account.

We start with the second fundamental form in Riemann-Cartan geometry.

\begin{pdef} \label{D:SFF}
Let \( S \) be an oriented smooth hypersurface embedded in an oriented Riemann-Cartan manifold \( {\left( M, \I_M, \del \right)} \) with or without boundary. Say \( S \) is oriented by the unit normal vector field \( N \) along \( S \). The \emph{(scalar) second fundamental form} of \( S \) with respect to \( \del \) is the smooth \( (0,2) \)-tensor field \( \II^{\del} \) on \( S \) defined by
\begin{align}
\II^{\del}(X, Y) & = \I_M {\left( N, \del_X Y \right)} \qquad (X, Y \in \Gamma(TS)),
\end{align}
where \( X \) and \( Y \) are extended arbitrarily smoothly to an open subset of \( M \).
\end{pdef}

\begin{plem}
In the setup of Definition~\ref{D:SFF}, the following two hold.
\begin{enumerate}
\item \( \II^{\del} \) is well defined, i.e., the value of\/ \( \II^{\del}(X, Y) \) at \( x \in S \) depends only on \( {\left. X \right|}_x \) and \( {\left. Y \right|}_x \), where \( X, Y \in \Gamma(TS) \).
\item \( \II^{\del}(X, Y) - \II^{\del}(Y, X) = \I_M {\left( N, T^{\del}(X, Y) \right)} \) for any \( X, Y \in \Gamma(TS) \).
\end{enumerate}
\end{plem}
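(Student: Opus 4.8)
The plan is to handle the two claims in order, with (a) carrying essentially all the content and (b) following by a short formal manipulation once (a) is available.

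For (a), I would separate the roles of \( X \) and \( Y \). Since an affine connection is \( C^{\infty}(M) \)-linear in the subscript slot, the value \( \del_X Y |_x \) — and hence \( \I_M {\left( N, \del_X Y \right)} |_x \) — depends only on \( X|_x \); this uses nothing about \( S \). It then remains to check independence of the chosen extension of \( Y \). Let \( Y, Y' \) be two extensions of the same section of \( TS \), so that \( Z = Y - Y' \) vanishes identically on \( S \). Since \( X \) extends a section of \( TS \), the vector \( X|_x \) lies in \( T_x S \); choose a curve \( \gamma \) in \( S \) with \( \gamma(0) = x \) and \( \gamma'(0) = X|_x \). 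Then \( Z \circ \gamma \equiv 0 \), so its covariant derivative along \( \gamma \) at \( 0 \) vanishes, and this covariant derivative is exactly \( \del_{X|_x} Z \). Hence \( \del_X Y|_x = \del_X Y'|_x \), and together with the first observation this shows that \( \II^{\del}(X, Y) \) at \( x \) depends only on \( X|_x \) and \( Y|_x \).

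For (b), I would expand the torsion: from \( T^{\del}(X, Y) = \del_X Y - \del_Y X - [X, Y] \) one gets
\[
\I_M {\left( N, T^{\del}(X, Y) \right)} = \II^{\del}(X, Y) - \II^{\del}(Y, X) - \I_M {\left( N, [X, Y] \right)}.
\]
The last term vanishes along \( S \): the fields \( X \) and \( Y \) are tangent to \( S \) at every point of \( S \) (being extensions of sections of \( TS \)), so by the standard fact that the Lie bracket of two vector fields tangent to a submanifold is again tangent to that submanifold, \( [X, Y]|_x \in T_x S \) for \( x \in S \), and therefore \( \I_M {\left( N, [X, Y] \right)} = 0 \) there since \( N \) is a unit normal. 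This produces the claimed identity, and also re-confirms consistency with (a) since \( T^{\del} \) is a genuine tensor.

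I expect the one point needing genuine care to be the \( Y \)-independence in part (a), i.e.\ that \( \del_X Z|_x = 0 \) whenever \( Z \) vanishes on \( S \) and \( X|_x \) is tangent to \( S \). The curve argument above is the cleanest route; equivalently, in a local frame \( {\left( E_i \right)} \) one writes \( \del_X Z = X {\left( Z^i \right)} E_i + Z^i \del_X E_i \), observes that the second sum vanishes at \( x \) because \( Z|_x = 0 \), and that \( X {\left( Z^i \right)} |_x = 0 \) because \( X|_x \in T_x S \) while each component function \( Z^i \) vanishes on \( S \). Everything else — the tensoriality in \( X \), the expansion of the torsion, and the tangency of the bracket — is standard, so no further obstacle is anticipated.
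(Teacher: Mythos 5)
Your proof is correct, but it reverses the logical order of the paper's argument, and the two proofs of part (a) are genuinely different. The paper first notes that the identity in (b) holds for \emph{arbitrary} smooth extensions of \( X \) and \( Y \) (the verification being exactly your expansion of \( T^{\del} \) together with the tangency of \( [X, Y] \) to \( S \)), and then \emph{deduces} (a) from (b): the value \( \II^{\del}(X, Y) |_x = \I_M {\left( N, \del_X Y \right)} |_x \) depends only on \( {\left. X \right|}_x \) and the extension of \( Y \) by \( C^{\infty} \)-linearity of \( \del \) in its lower slot, while by (b) it equals \( \II^{\del}(Y, X) |_x \) plus a torsion term that is fully tensorial, so it also depends only on \( {\left. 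Y \right|}_x \) and the extension of \( X \); combining the two dependencies forces it to depend only on \( {\left. X \right|}_x \) and \( {\left. Y \right|}_x \). You instead prove (a) directly via the standard fact that \( \del_{X|_x} Z = 0 \) whenever \( Z \) vanishes identically on \( S \) and \( {\left. X \right|}_x \in T_x S \), which you justify correctly both by the curve argument and by the frame computation. Your route is more elementary and self-contained (it does not lean on the tensoriality of the torsion), at the cost of a bit more writing; the paper's route is shorter but leaves the reader to reconstruct the combination-of-dependencies step. There are no gaps in your argument.
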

\begin{proof}
The equality in~(b) holds for any smooth extensions of \( X, Y \in \Gamma(TS) \). Since the value of the torsion \( T^{\del}(X, Y) \) at \( x \in S \) depends only on \( {\left. X \right|}_x \) and \( {\left. Y \right|}_x \), this implies~(a).
\end{proof}

\noindent Note that the second fundamental form fails to be symmetric in general. Indeed, it is symmetric exactly when the following \( 2 \)-form vanishes.

\begin{pdef}[Torsion \( 2 \)-form] \label{D:torsion-form}
In the setup of Definition~\ref{D:SFF}, the \emph{torsion\/ \( 2 \)-form} of \( S \) with respect to \( \del \) is \( \tau^{\del} \in \Omega^2 {\left( S, \mathbb{R} \right)} \) defined by
\begin{align}
\tau^{\del}(X, Y) & = \I_M {\left( N, T^{\del}(X, Y) \right)} \qquad (X, Y \in \Gamma(TS)).
\end{align}
\end{pdef}

The Weingarten map in Riemann-Cartan geometry is given as follows.

\begin{pdef}
In the setup of Definition~\ref{D:SFF}, the \emph{Weingarten map} of \( S \) with respect to \( \del \) is the smooth \( (1,1) \)-tensor field \( B^{\del} \) on \( S \) determined by
\begin{align}
\II^{\del}(X, Y) & = \I_S {\left( B^{\del}(X), Y \right)} \quad \text{for all} \ X, Y \in \Gamma(TS).
\end{align}
The \emph{(extrinsic) Gaussian curvature} and the \emph{mean curvature} of \( S \) with respect to \( \del \) are smooth scalar fields on \( M \) defined by
\begin{align}
K^{\del} & = \det B^{\del} \quad \text{and} \quad H^{\del} = \tr B^{\del}
\end{align}
respectively.
\end{pdef}

\begin{plem} \label{L:Weingarten-equation}
In the setup of Definition~\ref{D:SFF}, we have
\begin{align}
B^{\del}(X) & = - \del_X N
\end{align}
for any \( X \in \Gamma(TS) \) and any smooth extension of \( N \) to an open subset of \( M \).
\end{plem}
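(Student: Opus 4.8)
The plan is to deduce the identity from metric-compatibility of $\del$ together with the fact that $N$ is a \emph{unit} field \emph{normal} to $S$. First I would check that, for $X \in \Gamma(TS)$ and any smooth extension of $N$ to an open subset of $M$, the derivative $\del_X N$ is tangent to $S$ along $S$. Since $\del$ is metric-compatible and $\I_M(N, N) \equiv 1$, we get $0 = X {\left( \I_M(N, N) \right)} = 2 \I_M(\del_X N, N)$ at points of $S$, so $\del_X N \perp N$ there. I would also note that $\del_X N|_x$ depends only on $X|_x$ and the values of $N$ along a curve through $x$ with velocity $X|_x$, which may be chosen to lie in $S$; hence $\del_X N$ along $S$ does not depend on the chosen extension of $N$ (this parallels the well-definedness of $\II^{\del}$ from the preceding lemma).

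Next, I would fix $X, Y \in \Gamma(TS)$, extend $Y$ arbitrarily to a neighbourhood in $M$, and apply metric-compatibility once more: $X {\left( \I_M(N, Y) \right)} = \I_M(\del_X N, Y) + \I_M(N, \del_X Y)$. Along $S$ one has $\I_M(N, Y) \equiv 0$, and $X$ is tangent to $S$, so the left-hand side vanishes on $S$. Therefore $\II^{\del}(X, Y) = \I_M(N, \del_X Y) = - \I_M(\del_X N, Y) = - \I_S(\del_X N, Y)$, the last equality using that $\del_X N$ is tangent to $S$.

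Finally, I would compare this with the defining relation $\II^{\del}(X, Y) = \I_S {\left( B^{\del}(X), Y \right)}$ for the Weingarten map: this gives $\I_S {\left( B^{\del}(X) + \del_X N, Y \right)} = 0$ for every $Y \in \Gamma(TS)$, and nondegeneracy of $\I_S$ forces $B^{\del}(X) = - \del_X N$. There is no genuine obstacle here; the only point that wants a little care is the extension-independence of $\del_X N$, which is exactly why the statement can be phrased for an arbitrary smooth extension of $N$.
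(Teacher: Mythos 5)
Your proposal is correct and follows essentially the same route as the paper: metric-compatibility applied to \( \I_M(N,N) \equiv 1 \) shows \( \del_X N \) is tangent to \( S \), and metric-compatibility applied to \( \I_M(N,Y) \equiv 0 \) along \( S \) yields \( \II^{\del}(X,Y) = -\I_S(\del_X N, Y) \), from which the identity follows by nondegeneracy. The extra remark on extension-independence of \( \del_X N \) is a harmless refinement that the paper leaves implicit.
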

\begin{proof}
Fix \( X, Y \in \Gamma(TS) \) and extend them arbitrarily smoothly to an open subset of \( M \). Since \( 2 \I_M {\left( \del_X N, N \right)} = X {\left( \I_M {\left( N, N \right)} \right)} = 0 \) along \( S \), we see that \( \del_X N \in \Gamma(TS) \). Therefore, along \( S \), we have
\begin{align*}
\II^{\del}(X, Y) & = \I_M {\left( N, \del_X Y \right)} = \I_M {\left( - \del_X N, Y \right)} = \I_S {\left( - \del_X N, Y \right)},
\end{align*}
as desired.
\end{proof}

Now, let us consider the Weitzenb{\"o}ck case. We denote quantities by the superscript \( s \) instead of the superscript \( \del^s \) for legibility, e.g., \( H^s \) instead of \( H^{\del^s} \). We present one lemma and two propositions.

\begin{plem} \label{L:Weitzenbock}
Let \( S \) be an oriented smooth hypersurface embedded in an oriented parallelizable Riemannian manifold \( M \) with or without boundary. Let \( s \) and \( s' \) be two global oriented orthonormal smooth frames for \( M \). Suppose \( s = s' \) on \( S \). Then
\begin{align}
\II^s & = \II^{s'}, \qquad \tau^s = \tau^{s'}, \qquad B^s = B^{s'}, \qquad K^s = K^{s'}, \qquad H^s = H^{s'}.
\end{align}
\end{plem}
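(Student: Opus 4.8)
The plan is to derive all five equalities from the single fact that $\II^s = \II^{s'}$, and then to establish that fact by a direct computation that uses only the agreement of the two frames \emph{on} $S$. For the reduction: once $\II^s = \II^{s'}$, the torsion $2$-forms agree because $\tau^{\del}(X, Y) = \II^{\del}(X, Y) - \II^{\del}(Y, X)$, by combining Definition~\ref{D:torsion-form} with the lemma preceding Definition~\ref{D:torsion-form}; the Weingarten maps agree because $B^{\del}$ is the $(1,1)$-tensor obtained from $\II^{\del}$ by raising an index with the induced metric $\I_S$, which does not involve $\del$; and then $K^{\del} = \det B^{\del}$ and $H^{\del} = \tr B^{\del}$ give the remaining two equalities immediately.

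To prove $\II^s = \II^{s'}$, fix $x \in S$ and $X, Y \in \Gamma(TS)$, extended arbitrarily to a neighborhood $U$ of $x$ in $M$. Writing $s = (E_1, \dotsc, E_n)$ and $s' = (E_1', \dotsc, E_n')$, expand $Y = \sum_i Y^i E_i = \sum_i \tilde{Y}^i E_i'$ on $U$. Since both frames are orthonormal, $Y^i = \I_M(Y, E_i)$ and $\tilde{Y}^i = \I_M(Y, E_i')$; as $E_i = E_i'$ on $S$, the smooth functions $Y^i$ and $\tilde{Y}^i$ coincide on $U \cap S$. A function's derivative in a direction tangent to $S$ depends only on its restriction to $S$, so $X(Y^i)$ and $X(\tilde{Y}^i)$ agree at $x$ for each $i$ (that $X|_x \in T_x S$ is essential here). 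Combining this with $E_i|_x = E_i'|_x$, Definition~\ref{D:Weitzenbock-affine} gives
\[
{\left. \del^s_X Y \right|}_x = \sum_i {\left. X(Y^i) \right|}_x\, {\left. E_i \right|}_x = \sum_i {\left. X(\tilde{Y}^i) \right|}_x\, {\left. E_i' \right|}_x = {\left. \del^{s'}_X Y \right|}_x,
\]
whence $\II^s(X, Y)|_x = \I_M(N, \del^s_X Y)|_x = \I_M(N, \del^{s'}_X Y)|_x = \II^{s'}(X, Y)|_x$. As $x$, $X$, $Y$ are arbitrary, $\II^s = \II^{s'}$.

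The step that requires attention — and essentially the only place a misstep could occur — is the passage ${\left. X(Y^i) \right|}_x = {\left. X(\tilde{Y}^i) \right|}_x$: in general $\del^s_X Y$ at a point of $S$ really does depend on the frame $s$ off of $S$, and the identity holds only because $X$ is tangent to $S$, so the directional derivative sees $Y^i$ and $E_i$ solely along $S$. This is the same tensoriality phenomenon that underlies the well-definedness of $\II^{\del}$, so no genuine obstacle is expected beyond bookkeeping which quantities are intrinsic to $S$ and which are not.
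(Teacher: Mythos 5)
Your proof is correct and rests on the same observation as the paper's: the relevant derivative is taken in a direction tangent to $S$, so it sees only the restriction of the frame components to $S$, where the two frames agree. The only (inessential) difference is the order of deduction — the paper first establishes $B^s = B^{s'}$ via the Weingarten equation, writing $B^s = -\sum_i \mathrm{d}N^i \otimes E_i$ for the normal $N = \sum_i N^i E_i$, and derives the rest from that, whereas you first establish $\II^s = \II^{s'}$ by showing $\del^s_X Y = \del^{s'}_X Y$ along $S$ for $X$ tangent to $S$; both are equally valid.
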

\begin{proof}
Let \( s = {\left( E_1, \dotsc, E_n \right)} \) and \( s' = {\left( E_1', \dotsc, E_n' \right)} \). Say \( S \) is oriented by the unit normal vector field \( N \) along \( S \). Then \( N = N^1 E_1 + \dotsc + N^n E_n = N^1 E_1' + \dotsc + N^n E_n' \) along \( S \), since \( s = s' \) along \( S \). By Lemma~\ref{L:Weingarten-equation}, we find \( B^s = - \sum_{i = 1}^n \mathrm{d} N^i \otimes E_i = B^{s'} \). All the others follow from this.
\end{proof}

\begin{pprop} \label{P:mean-curvature-torsion}
Let \( S \) be an oriented smooth hypersurface embedded in an oriented parallelizable Riemannian \( n \)-manifold\/ \( {\left( M, \I_M \right)} \) with or without boundary. Let \( \iota \colon S \injto M \) be the inclusion. Let \( s = {\left( E_1, \dotsc, E_n \right)} \) be a global oriented orthonormal smooth frame for \( M \), and let\/ \( {\left( \varepsilon^1, \dotsc, \varepsilon^n \right)} \) be its dual. Say \( S \) is oriented by the unit normal vector field \( N = N^1 E_1 + \dotsb + N^n E_n \) along \( S \). Then
\begin{align}
H^s & = - \sum_{i = 1}^n E_i^{\top} {\left( N^i \right)} \quad \text{and} \quad \tau^s = \sum_{i = 1}^n N^i \iota^* \mathrm{d} \varepsilon^i.
\end{align}
\end{pprop}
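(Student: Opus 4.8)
The plan is to treat the two identities separately, each reducing to a short computation that exploits the defining property of the Weitzenb\"ock connection $\del^s$: its connection coefficients with respect to the frame $s = (E_1, \dots, E_n)$ vanish identically, so $\del^s_X E_i = 0$ for all $X$ and all $i$.

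For the mean curvature, I would begin from Lemma~\ref{L:Weingarten-equation}, which applies since $\del^s$ is metric-compatible ($s$ being orthonormal), and gives $B^s(X) = -\del^s_X N$ for $X \in \Gamma(TS)$, where $N = N^i E_i$ is extended arbitrarily smoothly off $S$. By Definition~\ref{D:Weitzenbock-affine} this is $B^s(X) = -\sum_i X(N^i)\, E_i$. I then split each ambient frame vector along $S$ into tangential and normal parts: since $s$ is orthonormal, $\I_M(E_i, N) = N^i$, hence $E_i = E_i^\top + N^i N$ on $S$. Substituting and using $\sum_i (N^i)^2 = \I_M(N, N) = 1$, so that $\sum_i N^i\, X(N^i) = \frac{1}{2} X\big(\sum_i (N^i)^2\big) = 0$, the normal contributions cancel and I obtain $B^s(X) = -\sum_i X(N^i)\, E_i^\top$ along $S$ --- in particular $B^s$ is indeed valued in $TS$, as it must be. Writing this as $B^s = -\sum_i \iota^*\mathrm{d}N^i \otimes E_i^\top$, the trace of each rank-one endomorphism $X \mapsto \iota^*\mathrm{d}N^i(X)\, E_i^\top$ of $TS$ is $\iota^*\mathrm{d}N^i(E_i^\top) = E_i^\top(N^i)$, so $H^s = \tr B^s = -\sum_i E_i^\top(N^i)$. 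Here $N^i$ is only a function on $S$, but $E_i^\top$ is tangent to $S$, so the expression makes sense and is independent of the extension.

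For the torsion $2$-form, I would unwind Definition~\ref{D:torsion-form}: for $X, Y \in \Gamma(TS)$, $\tau^s(X, Y) = \I_M(N, T^{\del^s}(X, Y))$. The point is that $T^{\del^s}$ is governed by the dual coframe $(\varepsilon^1, \dots, \varepsilon^n)$: expanding $T^{\del^s}(X, Y) = \del^s_X Y - \del^s_Y X - [X, Y]$ with $X = X^j E_j$, $Y = Y^k E_k$ and using $\del^s_{E_j} E_k = 0$, the $i$-th component in the frame $s$ comes out to be $-X^j Y^k\, \varepsilon^i([E_j, E_k]) = \mathrm{d}\varepsilon^i(X, Y)$ --- equivalently, Cartan's first structure equation with vanishing connection forms reads $T^{\del^s} = \mathrm{d}\varepsilon^i \otimes E_i$. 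Pairing with $N$ and using $\I_M(N, E_i) = N^i$ once more gives $\tau^s(X, Y) = \sum_i N^i\, \mathrm{d}\varepsilon^i(X, Y)$; since $X, Y$ are tangent to $S$, this is precisely $\tau^s = \sum_i N^i\, \iota^*\mathrm{d}\varepsilon^i$.

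I do not expect a serious obstacle here. The only points requiring care are the tangential/normal bookkeeping in the first identity --- in particular checking that the normal part of $\sum_i X(N^i) E_i$ vanishes so that the Weingarten map is tangent-valued --- and keeping track of the sign in the first structure equation for the second identity; both are settled by the elementary identity $\sum_i (N^i)^2 \equiv 1$ and a direct expansion of the Lie bracket, respectively.
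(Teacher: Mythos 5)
Your proposal is correct and follows essentially the same route as the paper: both identities reduce to the vanishing of the Weitzenb\"ock connection coefficients in the frame \( s \), with the mean curvature obtained by tracing \( B^s(X) = -\sum_i X{\left( N^i \right)} E_i \) and the torsion \( 2 \)-form from \( \mathrm{d}\varepsilon^i {\left( X, Y \right)} = -\varepsilon^i {\left( {\left[ X, Y \right]} \right)} \) on frame vectors. The only cosmetic difference is that the paper computes the trace directly against a local orthonormal frame for \( S \), whereas you first verify tangentiality of \( B^s \) and then trace the rank-one pieces; both are valid.
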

\begin{proof}
If \( {\left( \bar{E}_1, \dotsc, \bar{E}_{n-1} \right)} \) is a local oriented orthonormal smooth frame for \( S \),
\begin{align*}
H^s & = \tr B^s = \sum_{j = 1}^{n-1} \I_M {\left( \bar{E}_j, - \sum_{i = 1}^n \bar{E}_j {\left( N^i \right)} E_i \right)} = - \sum_{i = 1}^n \sum_{j = 1}^{n-1} \I_M {\left( E_i, \bar{E}_j \right)} \bar{E}_j {\left( N^i \right)}.
\end{align*}
Here, \( E_i^{\top} = \sum_{j = 1}^{n-1} \I_M {\left( E_i, \bar{E}_j \right)} \bar{E}_j \). The former follows. For all \( i, j \in \{ 1, \dotsc, n \} \),
\begin{align*}
\sum_{k = 1}^n N^k \, \mathrm{d} \varepsilon^k {\left( E_i, E_j \right)} & = - \sum_{k = 1}^n N^k \varepsilon^k {\left( {\left[ E_i, E_j \right]} \right)} = - \I_M {\left( N, {\left[ E_i, E_j \right]} \right)}
\end{align*}
and \( T^s {\left( E_i, E_j \right)} = - {\left[ E_i, E_j \right]} \). The latter follows.
\end{proof}

\begin{pprop} \label{P:comparison}
In the setup of Proposition~\ref{P:mean-curvature-torsion}, let\/ \( \del \) be a metric-compatible connection (e.g., the Levi-Civita connection) on \( M \). Then
\begin{align}
B^s & = B^{\del} + \sum_{i = 1}^n N^i \del E_i \quad \text{and} \quad H^s = H^{\del} - \sum_{i = 1}^n \I_M {\left( N, \del_{E_i} E_i \right)}.
\end{align}
\end{pprop}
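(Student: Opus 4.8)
The plan is to reduce both identities to Lemma~\ref{L:Weingarten-equation}, which for an arbitrary metric-compatible connection expresses the Weingarten map of $S$ as minus the covariant derivative of the unit normal. The Weitzenb\"ock connection $\del^s$ is itself metric-compatible, since it is induced by an orthonormal frame, so $(M, \I_M, \del^s)$ is a Riemann-Cartan manifold and Lemma~\ref{L:Weingarten-equation} applies to $\del^s$ as well, giving $B^s(X) = -\del^s_X N$ for $X \in \Gamma(TS)$.

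For the first identity I would write $N = N^1 E_1 + \dotsb + N^n E_n$ along $S$ and differentiate both connections: Definition~\ref{D:Weitzenbock-affine} gives $\del^s_X N = \sum_i X(N^i) E_i$, while the Leibniz rule gives $\del_X N = \sum_i X(N^i) E_i + \sum_i N^i \del_X E_i$ (for $X \in \Gamma(TS)$ the functions $N^i$ are differentiated along $S$, so no choice of extension of $N$ is involved here). Subtracting these and invoking Lemma~\ref{L:Weingarten-equation} for $\del$ and for $\del^s$ yields $B^s(X) = B^{\del}(X) + \sum_i N^i \del_X E_i$. Since $B^s(X)$ and $B^{\del}(X)$ are both tangent to $S$, the vector $\sum_i N^i \del_X E_i$ is automatically tangent to $S$, so this is a genuine identity of $(1,1)$-tensors on $S$.

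For the second identity I would take traces over $TS$. For any $(1,1)$-tensor $F$ on $M$, adjoining $N$ to a local oriented orthonormal frame of $S$ produces an orthonormal frame of $M$ along $S$, whence $\tr_{TS}(F|_{TS}) = \tr_{TM} F - \I_M(N, F(N))$. Applying this with $F = \sum_i N^i \del E_i$ (extending $N$ to a unit vector field near $S$; the combination is extension-independent), the full trace is $\tr_{TM} F = \sum_i N^i \sum_k \I_M(E_k, \del_{E_k} E_i)$, and the antisymmetry of the connection $1$-form of $\del$ in the orthonormal frame $s$ --- namely $\I_M(E_k, \del_{E_k} E_i) = -\I_M(\del_{E_k} E_k, E_i)$, which is just metric compatibility applied to the constant function $\I_M(E_k, E_i) = \delta_{ki}$ --- collapses this to $-\sum_k \I_M(N, \del_{E_k} E_k)$. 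The correction term $\I_M(N, F(N)) = \sum_i N^i \I_M(N, \del_N E_i)$ vanishes, again by metric compatibility, since $\I_M(N, \del_N N) = \tfrac12 N(\I_M(N,N)) = 0$ and $\sum_i (N N^i) N^i = \tfrac12 N(\sum_i (N^i)^2) = 0$. Combining with the first identity gives $H^s = \tr_{TS} B^s = H^{\del} - \sum_i \I_M(N, \del_{E_i} E_i)$.

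Everything here is routine computation; the only point requiring attention is the bookkeeping of normal components --- verifying that $\sum_i N^i \del_X E_i$ lands in $TS$ and that the normal-direction correction in the trace identity drops out --- and both of these are immediate consequences of metric compatibility applied to the unit normal $N$. I do not anticipate any genuine obstacle.
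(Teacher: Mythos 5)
Your argument is correct and follows essentially the same route as the paper: the first identity comes from the Leibniz rule applied to \( N = \sum_i N^i E_i \) together with Lemma~\ref{L:Weingarten-equation} for both connections, and the second comes from observing that the \( TS \)-trace of \( \sum_i N^i \del E_i \) agrees with its \( TM \)-trace (because the normal--normal component vanishes) and then collapsing the full trace by metric compatibility. The only cosmetic difference is that you deduce \( \I_M(N, \sum_i N^i \del_N E_i) = 0 \) from \( \I_M(N, \del_N N) = 0 \) and \( \sum_i N(N^i) N^i = 0 \), whereas the paper expands the double sum directly; both are the same fact.
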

\begin{proof}
For the former, observe that
\begin{align*}
\del_X N & = \del_X {\left( \sum_{i = 1}^n N^i E_i \right)} = \sum_{i = 1}^n X {\left( N^i \right)} E_i + \sum_{i = 1}^n N^i \del_X E_i = \del^s_X N + \sum_{i = 1}^n N^i \del_X E_i,
\end{align*}
where \( X \in \Gamma(TS) \). For the latter, first note that
\begin{align*}
& \I_M {\left( N, \sum_{i = 1}^n N^i \del_N E_i \right)} = \sum_{i, j = 1}^n N^j N^i \, \I_M {\left( E_j, \del_N E_i \right)} \\
& \qquad = \sum_{i = 1}^n {\left( N^i \right)}{}^2 \, \I_M {\left( E_i, \del_N E_i \right)} + \sum_{i < j} N^j N^i {\left( \I_M {\left( E_j, \del_N E_i \right)} + \I_M {\left( \del_N E_j, E_i \right)} \right)} \\
& \qquad = \sum_{i = 1}^n {\left( N^i \right)}{}^2 \frac{1}{2} N {\left( \I_M {\left( E_i, E_i \right)} \right)} + \sum_{i < j} N^j N^i N {\left( \I_M {\left( E_j, E_i \right)} \right)} = 0.
\end{align*}
This implies that the trace of \( \sum_{i = 1}^n N^i \del E_i \) as a map \( TS \to TS \) is equal to the trace as a map \( TM \to TM \). Therefore,
\begin{align*}
\tr \! {\left( \sum_{i = 1}^n N^i \del E_i \right)} & = \sum_{j = 1}^n \I_M {\left( E_j, \sum_{i = 1}^n N^i \del_{E_j} E_i \right)} = - \sum_{i, j = 1}^n \I_M {\left( \del_{E_j} E_j, N^i E_i \right)}.
\end{align*}
This completes the proof of the latter.
\end{proof}

Finally, the following provides a satisfactory answer to the geometric meaning of the leading coefficient in~\eqref{E:asymptotics-result}.

\begin{pprop}
Let \( S \) be an oriented smooth surface embedded in an oriented parallelizable Riemannian\/ \( 3 \)-manifold\/ \( {\left( M, \I_M \right)} \) with or without boundary, and let \( s = {\left( E_1, E_2, E_3 \right)} \) be a global oriented orthonormal smooth frame for \( M \). Say \( S \) is oriented by the unit normal vector field \( N = N^1 E_1 + N^2 E_2 + N^3 E_3 \) along \( S \). Then
\begin{align}
\tau^s & = - \sum_{\mathrm{cyc}} N^i \, \I_M {\left( N, {\left[ E_j, E_k \right]} \right)} \, \mathrm{d} a.
\end{align}
\end{pprop}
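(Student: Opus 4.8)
\noindent The plan is to compute $\tau^s$ pointwise by evaluating it on a local oriented orthonormal frame for the surface $S$. Since the torsion $T^s$ is a tensor, $\tau^s$ is a genuine $2$-form on $S$, so $\tau^s = f \, \mathrm{d} a$ where $f = \tau^s {\left( \bar{E}_1, \bar{E}_2 \right)}$ for any local oriented orthonormal smooth frame $(\bar{E}_1, \bar{E}_2)$ for $S$. By the orientation convention, $(N, \bar{E}_1, \bar{E}_2)$ is then a local oriented orthonormal smooth frame for $M$; writing $\bar{E}_1 = \sum_i a^i E_i$ and $\bar{E}_2 = \sum_i b^i E_i$, the matrix with columns $(N^i)_i$, $(a^i)_i$, $(b^i)_i$ lies in $\mathrm{SO}(3)$, so that $N = \bar{E}_1 \times \bar{E}_2$ in the frame $s$; equivalently, $a^j b^k - a^k b^j = N^i$ whenever $(i,j,k)$ is a cyclic permutation of $(1,2,3)$.

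\noindent Next I would use that $\del^s E_k = 0$ for every $k$, so that $T^s {\left( E_i, E_j \right)} = \del^s_{E_i} E_j - \del^s_{E_j} E_i - [E_i, E_j] = -[E_i, E_j]$, exactly as in the proof of Proposition~\ref{P:mean-curvature-torsion}. Since $T^s$ is tensorial and $\mathbb{R}$-bilinear, evaluating at a point of $S$ gives
\[
T^s {\left( \bar{E}_1, \bar{E}_2 \right)} = - \sum_{i, j} a^i b^j [E_i, E_j] = - \sum_{i < j} {\left( a^i b^j - a^j b^i \right)} [E_i, E_j] = - \sum_{\mathrm{cyc}} N^i [E_j, E_k],
\]
where the last equality substitutes the cross-product relations from the first step and uses the antisymmetry of the Lie bracket.

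\noindent Finally, contracting with the unit normal,
\[
f = \tau^s {\left( \bar{E}_1, \bar{E}_2 \right)} = \I_M {\left( N, T^s {\left( \bar{E}_1, \bar{E}_2 \right)} \right)} = - \sum_{\mathrm{cyc}} N^i \, \I_M {\left( N, [E_j, E_k] \right)},
\]
so that $\tau^s = f \, \mathrm{d} a$ is the asserted identity. (One could instead start from $\tau^s = \sum_i N^i \iota^* \mathrm{d} \varepsilon^i$ in Proposition~\ref{P:mean-curvature-torsion} and evaluate on $(\bar{E}_1, \bar{E}_2)$, but this reduces to the same $\mathrm{SO}(3)$-bookkeeping.) There is no serious obstacle here; the only thing that needs care is the orientation convention — in particular that $N$ is the \emph{first} vector of the induced oriented frame for $M$, which is what fixes the signs and the cyclic arrangement of indices on the right-hand side.
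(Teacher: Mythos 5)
Your proposal is correct and follows essentially the same route as the paper: expand $\bar{E}_1,\bar{E}_2$ in the frame $s$, use $T^s(E_i,E_j)=-[E_i,E_j]$, and invoke the $\mathrm{SO}(3)$ cross-product identity to identify the antisymmetrized coefficients with $N^k$. The only cosmetic difference is that the paper orders the adapted frame as $(\bar{E}_1,\bar{E}_2,N)$ and reads off $\bar{g}^i_1\bar{g}^j_2-\bar{g}^j_1\bar{g}^i_2=\bar{g}^k_3=N^k$, while you place $N$ first; these agree since the orderings differ by a cyclic (even) permutation.
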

\begin{proof}
Let \( {\left( \bar{E}_1, \bar{E}_2 \right)} \) be a local oriented orthonormal smooth frame for \( S \). Let \( \bar{g} \) be the \( \mathrm{SO}(3) \)-valued smooth map given by \( {\left( \bar{E}_1, \bar{E}_2, N \right)} = {\left( E_1, E_2, E_3 \right)} \cdot \bar{g} \). Then
\begin{align*}
\tau^s {\left( \bar{E}_1, \bar{E}_2 \right)} & = \sum_{i, j = 1}^3 \bar{g}^i_1 \bar{g}^j_2 \, \I_M {\left( N, T^s {\left( E_i, E_j \right)} \right)} = \sum_{\substack{i, j, k \\ \mathrm{cyc}}} {\big( \bar{g}^i_1 \bar{g}^j_2 - \bar{g}^i_2 \bar{g}^j_1 \big)} \, \I_M {\left( N, T^s {\left( E_i, E_j \right)} \right)}.
\end{align*}
Here, \( \bar{g}^i_1 \bar{g}^j_2 - \bar{g}^j_1 \bar{g}^i_2 = \bar{g}^k_3 = N^k \) and \( T^s {\left( E_i, E_j \right)} = - {\left[ E_i, E_j \right]} \).
\end{proof}

A careful reader might notice that we have reached the convergence of the limit~\eqref{E:A} in Theorem~\ref{M:A} with the assumption that \( s \) is constant along the foliation \( {\left\{ S_r \right\}}_{r \in (-\varepsilon, \infty)} \), not the foliation \( {\left\{ S_r \right\}}_{r \in [0, \infty)} \). Therefore, we need some technical arguments to complete the proof of Theorem~\ref{M:A} as follows. Suppose the setup of Theorem~\ref{M:A}\@. Let \( s' \colon (S_{-\varepsilon}, S_{\infty}) \injto {\left. FM \right|}_{(S_{-\varepsilon}, S_{\infty})} \) be the smooth frame such that \( s = s' \) on \( [S, S_{\infty}) \) and it is constant along the foliation \( {\left\{ S_r \right\}}_{r \in (-\varepsilon, \infty)} \), which can be constructed by defining \( {\left. s' \right|}_{S_r} = {\left( A_r {\left( u_r \right)}_* \right)}^{-1} {\left( {\left. s \right|}_S \right)} \) for all \( r \in (-\varepsilon, \infty) \). Then
\begin{align*}
\int_{C_r} s^* \cs {\left( \bar{\omega} \right)} & = \int_C s^* \cs {\left( \bar{\omega} \right)} + \int_{[S, S_r]} s^* \cs {\left( \bar{\omega} \right)} = \int_C s^* \cs {\left( \bar{\omega} \right)} + \int_{[S, S_r]} {\left( s' \right)}^* \cs {\left( \bar{\omega} \right)} \\
& = \int_C s^* \cs {\left( \bar{\omega} \right)} - \frac{\boldsymbol{e}^r}{4 \sqrt{2} \boldsymbol{\pi}^2} \int_{S^{\infty}} \tau {\left( s'{}^{\infty} \right)} + (\textnormal{convergent terms})
\end{align*}
as \( r \to \infty \). Here, \( \tau {\left( s'{}^\infty \right)} = \tau {\left( s^{\infty} \right)} \) by Lemma~\ref{L:Weitzenbock}, since \( s'{}^{\infty} = s^{\infty} \) on \( S^{\infty} \). This completes the proof of Theorem~\ref{M:A}\@.

\section{Volume and Chern-Simons invariant} \label{S:monodromy}

The goal of this section is to find the asymptotics of the \( \mathrm{PSL}_2 {\left( \mathbb{C} \right)} \)-Chern-Simons invariant. We first give a brief introduction to the subject regarding the \( \mathrm{PSL}_2 {\left( \mathbb{C} \right)} \)-Chern-Simons invariant in Subsection~\ref{SS:monodromy}, and then we engage in the proof of Theorem~\ref{M:B} and Corollary~\ref{M:C} in Subsection~\ref{SS:asymptotics}.

\subsection{Monodromy and Chern-Simons invariant} \label{SS:monodromy}

It is a well-known fact that every connected Riemannian manifold possesses a natural flat principal bundle. For the hyperbolic case, this produces the \( \mathrm{PSL}_2 {\left( \mathbb{C} \right)} \)-Chern-Simons invariant. We give a brief account of this subject in this subsection, before going into Theorem~\ref{M:B} in the next subsection. Details and proofs omitted in this subsection can be found in~\cite{BaseilhacCS}. See also~\cite{MR807069} and~\cite{MR1218931} for more details.

If \( G \) is a Lie group and \( M \) is a connected smooth manifold with base point \( x \) and without boundary, we have the Riemann-Hilbert one-to-one correspondence between flat principal bundles and holonomy representations as follows.
\begin{align}
& \frac{\{ \textnormal{flat principal \( G \)-bundles over \( M \)} \}}{\textnormal{isomorphism}} \longbijto \frac{\Hom {\left( \pi_1(M, x), G \right)}}{\textnormal{conjugation by \( G \)}}.
\end{align}
More precisely, the inverse is given as follows. Suppose that a holonomy representation \( \varrho \colon \pi_1(M, x) \to G \) is given. We take the following model for the universal covering \( \mathrm{cov} \colon \tilde{M} \surjto M \) (e.g., Theorem~8.4 in Chapter~III of~\cite{MR1224675}).
\begin{align}
\tilde{M} & = {\left\{ {\left[ \beta \right]}_{\mathrm{rel} \, \partial} \, \middle| \, \textnormal{\( \beta \) is a path in \( M \) with \( \beta(0) = x \)} \right\}} \quad \text{and} \quad \mathrm{cov} {\left( {\left[ \beta \right]}_{\mathrm{rel} \, \partial} \right)} = \beta(1). \label{E:universal-covering}
\end{align}
Here, \( {\left[ \beta \right]}_{\mathrm{rel} \, \partial} \) denotes the homotopy class of a path \( \beta \) relative to its endpoints. We define a smooth manifold \( \tilde{M} \times_{\varrho} G \) as the quotient of the product \( \tilde{M} \times G \) by the equivalence relation given by
\begin{align}
& {\left( {\left[ \beta \right]}_{\mathrm{rel} \, \partial}, g \right)} \sim {\left( {\left[ \gamma \right]} \cdot {\left[ \beta \right]}_{\mathrm{rel} \, \partial}, \varrho {\left( {\left[ \gamma \right]} \right)} g \right)} \quad \text{for all} \ {\left[ \gamma \right]} \in \pi_1(M, x),
\end{align}
and define the projection \( \pi_{\varrho} \colon \tilde{M} \times_{\varrho} G \surjto M \) by \( \pi_{\varrho} {\left( {\left[ {\left[ \beta \right]}_{\mathrm{rel} \, \partial}, g \right]} \right)} = \beta(1) \). Then this is a smooth principal \( G \)-bundle over \( M \), and the trivial connection on the trivial principal \( G \)-bundle \( \tilde{M} \times G \surjto \tilde{M} \) descends to a flat connection \( \omega_{\varrho} \) on it.

In particular, if \( M \) is a connected (oriented) Riemannian manifold without boundary, there is a canonical choice of holonomy representation as follows. Let \( G = \Isom^{(+)} {\big( \tilde{M} \big)} \) be the (orientation-preserving) isometry group of the universal covering manifold \( \tilde{M} \) of \( M \), which is a Lie group by the Myers-Steenrod theorem. Then there is a unique (up to isomorphism) flat principal \( G \)-bundle over \( M \) corresponding to the \emph{monodromy representation} \( \hat{\varrho} \colon \pi_1(M) \injto G \). It does not depend on the choice of a base point of \( M \), since a different choice only yields a conjugate.

We can consider the Chern-Simons \( 3 \)-form of this natural flat principal \( G \)-bundle over \( M \), but we need a global smooth section to discuss its Chern-Simons invariant. In particular, if \( M \) is a connected oriented complete Riemannian \( 3 \)-manifold without boundary having constant sectional curvature (i.e., an oriented \( 3 \)-space form), there is a natural bundle map between the oriented orthonormal frame bundle and the flat principal \( G \)-bundle corresponding to the monodromy representation, which is constructed as follows. Fix a base point \( x \in M \), and consider the universal covering \( \mathrm{cov} \colon \tilde{M} \surjto M \) of~\eqref{E:universal-covering}. Note that \( \tilde{M} \) is isometric to either the sphere \( \mathbb{S}^3_r \), the Euclidean space \( \mathbb{R}^3 \), or the hyperbolic space \( \mathbb{H}^3_r \) for some radius \( r > 0 \) by the Killing-Hopf theorem. Let \( \pi \colon FM \surjto M \) and \( \tilde{\pi} \colon F \tilde{M} \surjto \tilde{M} \) be the oriented orthonormal frame bundles. Let \( \hat{\varrho} \colon \pi_1(M, x) \injto G = \Isom^+ {\big( \tilde{M} \big)} \) be the monodromy representation. Fixing a base frame \( {\big( \tilde{x}, \tilde{f} \big)} \in F \tilde{M} \) at the point \( \tilde{x} = {\left[ x \right]}_{\mathrm{rel} \, \partial} \in \tilde{M} \), we can identify \( F \tilde{M} \) with \( G \) by
\begin{align}
& \Phi \colon G \isoto F \tilde{M} \;\!;\, \phi \mapsto {\big( \phi {\left( \tilde{x} \right)}, \phi_* \tilde{f} \big)}. \label{E:identification}
\end{align}
Then we obtain the following diagram.
\begin{align}
\begin{tikzcd}[ampersand replacement=\&]
F \tilde{M} \ar[rr, "\tilde{q} = \tilde{\pi} \times \Phi^{-1}"] \ar[d, two heads] \& \& \tilde{M} \times G \ar[d, two heads] \\
FM \ar[rr, dashed, "q"] \ar[rd, two heads, "\pi"'] \& \& \tilde{M} \times_{\hat{\varrho}} G \ar[ld, two heads, "\pi_{\hat{\varrho}}"] \\
\& M
\end{tikzcd}
\end{align}
The map \( \tilde{q} \) descends to a smooth bundle map \( q \). Therefore, if further \( M \) is parallelizable, a global smooth frame \( s \colon M \injto FM \) induces a global smooth section
\begin{align}
& \sigma = q \circ s \colon M \injto \tilde{M} \times_{\hat{\varrho}} G. \label{E:sigma}
\end{align}

Now, let us focus on the hyperbolic case. Recall that \( \Isom^+ {\left( \mathbb{H}^3 \right)} \iso \mathrm{PSL}_2 {\left( \mathbb{C} \right)} \). If \( M \) is a connected oriented complete hyperbolic \( 3 \)-manifold without boundary, it possesses a natural flat principal \( \mathrm{PSL}_2 {\left( \mathbb{C} \right)} \)-bundle. The \emph{\( \mathrm{PSL}_2 {\left( \mathbb{C} \right)} \)-Chern-Simons\/ \( 3 \)-form} means the corresponding Chern-Simons \( 3 \)-form. If further \( M \) is compact, we can define the \emph{\( \mathrm{PSL}_2 {\left( \mathbb{C} \right)} \)-Chern-Simons invariant} \( \CS_{\mathrm{PSL}_2 {\left( \mathbb{C} \right)}} {\left( M, \sigma \right)} \) as the integral of the pullback of the \( \mathrm{PSL}_2 {\left( \mathbb{C} \right)} \)-Chern-Simons \( 3 \)-form over \( M \), which depends on the choice of a global smooth section \( \sigma \colon M \injto \mathbb{H}^3 \times_{\hat{\varrho}} \mathrm{PSL}_2 {\left( \mathbb{C} \right)} \).

Consider the following upper half space model for the hyperbolic space \( \mathbb{H}^3 \).
\begin{align}
\mathbb{H}^3 & = {\left\{ {\left( x^1, x^2, t \right)} \in \mathbb{R}^3 \, \middle| \, t > 0 \right\}} \quad \text{with} \quad \I_{\mathbb{H}^3} = \frac{(\mathrm{d} x^1)^2 + (\mathrm{d} x^2)^2 + \mathrm{d} t^2}{t^2}. \label{E:hyperbolic-space}
\end{align}
Let \( \tilde{\pi} \colon F \mathbb{H}^3 \surjto \mathbb{H}^3 \) be the oriented orthonormal frame bundle. Let \( \theta \in \Omega^1 {\left( F \mathbb{H}^3, \mathbb{R}^3 \right)} \) be its solder form and \( \bar{\omega} \in \Omega^1 {\left( F \mathbb{H}^3, \mathfrak{so}(3) \right)} \) be its Levi-Civita connection. We have an identification \( \Phi \colon \mathrm{PSL}_2 {\left( \mathbb{C} \right)} \isoto F \mathbb{H}^3 \) of~\eqref{E:identification} with \( \tilde{f} = {\left. {\left( \frac{\partial}{\partial x^1}, \frac{\partial}{\partial x^2}, \frac{\partial}{\partial t} \right)} \right|}_{\hat{\jmath}} \) at \( \hat{\jmath} = (0,0,1) \). Let \( {\left\{ h, e, f \right\}} \) be the basis for \( \mathfrak{sl}_2 {\left( \mathbb{C} \right)} \) over \( \mathbb{C} \) defined by
\begin{align}
h & = \begin{pmatrix*}[r]
1 & 0 \\
0 & -1
\end{pmatrix*}, \qquad e = \begin{pmatrix*}[r]
0 & 1 \\
0 & 0
\end{pmatrix*}, \qquad f = \begin{pmatrix*}[r]
0 & 0 \\
1 & 0
\end{pmatrix*}.
\end{align}
Its dual \( {\left\{ h^*, e^*, f^* \right\}} \) consists of complex-valued left-invariant \( 1 \)-forms on \( \mathrm{PSL}_2 {\left( \mathbb{C} \right)} \). An important equality on \( \mathrm{PSL}_2 {\left( \mathbb{C} \right)} \) is that
\begin{align}
\boldsymbol{i} h^* \wedge e^* \wedge f^* & = \Phi^* {\left( \theta^1 \wedge \theta^2 \wedge \theta^3 - \frac{1}{4} \, \mathrm{d} {\left( \sum_{\mathrm{cyc}} \theta^i \wedge \bar{\omega}^j_k \right)} + \boldsymbol{i} \boldsymbol{\pi}^2 \cs {\left( \bar{\omega} \right)} \right)}. \label{E:hef}
\end{align}
Here, \( \cs {\left( \bar{\omega} \right)} \) is the \( \mathrm{SO}(3) \)-Chern-Simons \( 3 \)-form of \( \mathbb{H}^3 \). This equality is presented in Lemma~3.1 of~\cite{MR807069} and Proposition~3.13 of~\cite{BaseilhacCS}. This implies the following well-known equality between two natural Chern-Simons \( 3 \)-forms.

\begin{pprop} \label{P:CS-form}
Let \( M \) be a connected oriented parallelizable complete hyperbolic\/ \( 3 \)-manifold without boundary. Let \( \pi \colon FM \surjto M \) be the oriented orthonormal frame bundle, and let \( \bar{\omega} \in \Omega^1 {\left( FM, \mathfrak{so}(3) \right)} \) be its Levi-Civita connection. Let \( s \colon M \injto FM \) be a global smooth frame, and let\/ \( {\left( \varepsilon^1, \varepsilon^2, \varepsilon^3 \right)} \) be its dual. Let \( \sigma \) be the global smooth section in~\eqref{E:sigma} induced by \( s \). Then
\begin{align}
4 \boldsymbol{i} \boldsymbol{\pi}^2 \sigma^* \cs {\left( \omega_{\hat{\varrho}} \right)} & = \mathrm{d} \vol - \frac{1}{4} \, \mathrm{d} {\left( \sum_{\mathrm{cyc}} \varepsilon^i \wedge s^* \bar{\omega}^j_k \right)} + \boldsymbol{i} \boldsymbol{\pi}^2 s^* \cs {\left( \bar{\omega} \right)}. \label{E:CS-form}
\end{align}
Here,\/ \( \cs {\left( \omega_{\hat{\varrho}} \right)} \) is the\/ \( \mathrm{PSL}_2 {\left( \mathbb{C} \right)} \)-Chern-Simons\/ \( 3 \)-form of \( M \).
\end{pprop}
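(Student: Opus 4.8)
The plan is to reduce the identity to the pointwise identity~\eqref{E:hef} on $\mathrm{PSL}_2 {\left( \mathbb{C} \right)}$, exploiting that $\omega_{\hat{\varrho}}$ becomes the trivial connection after passing to the universal cover and that the bundle map $q$ of Subsection~\ref{SS:monodromy} restricts there to $\tilde{q} = \tilde{\pi} \times \Phi^{-1}$. Write $\mathrm{cov} \colon \mathbb{H}^3 \surjto M$ for the universal covering and $p_F \colon F \mathbb{H}^3 \surjto FM$ for the induced covering of oriented orthonormal frame bundles. Since $\mathrm{cov}$ is a surjective local diffeomorphism, $\mathrm{cov}^*$ is injective on differential forms, so it suffices to prove~\eqref{E:CS-form} after applying $\mathrm{cov}^*$. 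Because $\mathbb{H}^3$ is simply connected, $s \circ \mathrm{cov}$ lifts to a global smooth frame $\tilde{s} \colon \mathbb{H}^3 \injto F \mathbb{H}^3$ with $p_F \circ \tilde{s} = s \circ \mathrm{cov}$. Unwinding the construction of $\sigma = q \circ s$: the pullback of $\mathbb{H}^3 \times_{\hat{\varrho}} \mathrm{PSL}_2 {\left( \mathbb{C} \right)}$ along $\mathrm{cov}$ is the trivial bundle $\mathbb{H}^3 \times \mathrm{PSL}_2 {\left( \mathbb{C} \right)}$ with the trivial connection $\omega_{\mathrm{triv}} = \mathrm{pr}^* \mu$, and the commutative diagram in Subsection~\ref{SS:monodromy} gives $\sigma \circ \mathrm{cov} = q' \circ \tilde{q} \circ \tilde{s}$, where $q'$ denotes the quotient map. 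Hence, using naturality of $\cs$ under the bundle map $q'$ and $(q')^* \omega_{\hat{\varrho}} = \omega_{\mathrm{triv}}$,
\begin{align*}
\mathrm{cov}^* \sigma^* \cs {\left( \omega_{\hat{\varrho}} \right)} & = {\left( \tilde{q} \circ \tilde{s} \right)}^* \cs {\left( \omega_{\mathrm{triv}} \right)}.
\end{align*}

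Next I would compute $\cs {\left( \omega_{\mathrm{triv}} \right)}$. By the Maurer--Cartan equation the curvature of $\omega_{\mathrm{triv}}$ vanishes, so $\cs {\left( \omega_{\mathrm{triv}} \right)} = - \frac{1}{6} \mathrm{pr}^* {\left\langle \mu \wedge {\left[ \mu \wedge \mu \right]} \right\rangle}$. Writing $\mu = h^* h + e^* e + f^* f$ and using ${\left[ h, e \right]} = 2 e$, ${\left[ h, f \right]} = - 2 f$, ${\left[ e, f \right]} = h$ together with the $\Ad$-invariant form~\eqref{E:Ad-invariant-form} on $\mathfrak{sl}_2 {\left( \mathbb{C} \right)}$, for which ${\left\langle h, h \right\rangle} = - \frac{1}{4 \boldsymbol{\pi}^2}$, ${\left\langle e, f \right\rangle} = {\left\langle f, e \right\rangle} = - \frac{1}{8 \boldsymbol{\pi}^2}$ and all remaining pairings of basis elements vanish, a short bilinear computation gives ${\left\langle \mu \wedge {\left[ \mu \wedge \mu \right]} \right\rangle} = - \frac{3}{2 \boldsymbol{\pi}^2} h^* \wedge e^* \wedge f^*$. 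Therefore
\begin{align*}
\cs {\left( \omega_{\mathrm{triv}} \right)} & = \frac{1}{4 \boldsymbol{\pi}^2} \mathrm{pr}^* {\left( h^* \wedge e^* \wedge f^* \right)}, \qquad \tilde{q}^* \cs {\left( \omega_{\mathrm{triv}} \right)} = \frac{1}{4 \boldsymbol{\pi}^2} {\left( \Phi^{-1} \right)}^* {\left( h^* \wedge e^* \wedge f^* \right)},
\end{align*}
the second equality because $\mathrm{pr} \circ \tilde{q} = \Phi^{-1}$.

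Finally I would substitute~\eqref{E:hef}. Applying ${\left( \Phi^{-1} \right)}^*$ to~\eqref{E:hef} and using ${\left( \Phi^{-1} \right)}^* \Phi^* = \id$ gives
\begin{align*}
{\left( \Phi^{-1} \right)}^* {\left( h^* \wedge e^* \wedge f^* \right)} & = - \boldsymbol{i} {\left( \theta^1 \wedge \theta^2 \wedge \theta^3 - \frac{1}{4} \mathrm{d} {\left( \sum_{\mathrm{cyc}} \theta^i \wedge \bar{\omega}^j_k \right)} + \boldsymbol{i} \boldsymbol{\pi}^2 \cs {\left( \bar{\omega} \right)} \right)}.
\end{align*}
Pulling this back by $\tilde{s}$ and invoking naturality of the solder form and of the Levi-Civita connection under the isometric covering $p_F$ (so that $\tilde{s}^* \theta^i = \mathrm{cov}^* \varepsilon^i$, hence $\tilde{s}^* {\left( \theta^1 \wedge \theta^2 \wedge \theta^3 \right)} = \mathrm{cov}^* \mathrm{d} \vol$, while $\tilde{s}^* \bar{\omega}^j_k = \mathrm{cov}^* s^* \bar{\omega}^j_k$ and $\tilde{s}^* \cs {\left( \bar{\omega} \right)} = \mathrm{cov}^* s^* \cs {\left( \bar{\omega} \right)}$), and combining with the previous two displays, I obtain
\begin{align*}
\mathrm{cov}^* \sigma^* \cs {\left( \omega_{\hat{\varrho}} \right)} & = \frac{- \boldsymbol{i}}{4 \boldsymbol{\pi}^2} \mathrm{cov}^* {\left( \mathrm{d} \vol - \frac{1}{4} \mathrm{d} {\left( \sum_{\mathrm{cyc}} \varepsilon^i \wedge s^* \bar{\omega}^j_k \right)} + \boldsymbol{i} \boldsymbol{\pi}^2 s^* \cs {\left( \bar{\omega} \right)} \right)}.
\end{align*}
Cancelling $\mathrm{cov}^*$ and multiplying through by $4 \boldsymbol{i} \boldsymbol{\pi}^2$ (and noting $4 \boldsymbol{i} \boldsymbol{\pi}^2 \cdot \frac{- \boldsymbol{i}}{4 \boldsymbol{\pi}^2} = 1$) yields~\eqref{E:CS-form}.

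I expect the main obstacle to be the bookkeeping in the first step: matching the bundle map $q$ on the universal cover with $\tilde{q}$, confirming that the pulled-back connection is exactly $\omega_{\mathrm{triv}} = \mathrm{pr}^* \mu$, and tracking the naturality of $\theta$, $\bar{\omega}$, and $\cs {\left( \bar{\omega} \right)}$ (together with the factors of $\boldsymbol{i}$) through all the pullbacks. The remaining ingredients---the $\mathfrak{sl}_2 {\left( \mathbb{C} \right)}$ computation of ${\left\langle \mu \wedge {\left[ \mu \wedge \mu \right]} \right\rangle}$ and the identity~\eqref{E:hef}---are routine or taken as given.
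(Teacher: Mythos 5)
Your proposal is correct and follows essentially the same route as the paper's proof: reduce to the universal cover where \( \omega_{\hat{\varrho}} \) becomes the pullback of the Maurer-Cartan form, compute \( \cs \) of that trivial connection as \( -\frac{1}{6} {\left\langle \mu \wedge {\left[ \mu \wedge \mu \right]} \right\rangle} = \frac{1}{4 \boldsymbol{\pi}^2} h^* \wedge e^* \wedge f^* \), and conclude via the identity~\eqref{E:hef} using \( \sigma = q \circ s \) and \( \Phi \circ \pi_{\mathsf{g}} \circ q = \id_{F \mathbb{H}^3} \). Your version just spells out the covering-space bookkeeping that the paper compresses into ``it suffices to show the equality at the level of the universal covering manifold.''
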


\begin{prmrk}
In Proposition~\ref{P:CS-form}, the monodromy representation \( \hat{\varrho} \colon \pi_1(M) \injto G \) actually depends on the choice of a base point \( x \in M \), and the global smooth section \( \sigma \colon M \injto \mathbb{H}^3 \times_{\hat{\varrho}} G \) depends on the choice of a base frame, where \( G = \mathrm{PSL}_2 {\left( \mathbb{C} \right)} \). However, the pullback \( \sigma^* \cs {\left( \omega_{\hat{\varrho}} \right)} \) does not depend on those choices.
\end{prmrk}

\begin{proof}[Proof of Proposition~\ref{P:CS-form}]
As a local argument, it suffices to show the equality at the level of the universal covering manifold \( \mathbb{H}^3 \). Let \( \pi_{\mathsf{g}} \colon \mathbb{H}^3 \times \mathrm{PSL}_2 {\left( \mathbb{C} \right)} \surjto \mathrm{PSL}_2 {\left( \mathbb{C} \right)} \) be the projection, and let \( \mu \) be the Maurer-Cartan \( 1 \)-form on \( \mathrm{PSL}_2 {\left( \mathbb{C} \right)} \). Then
\begin{align}
4 \boldsymbol{i} \boldsymbol{\pi}^2 \sigma^* \cs {\left( \pi_{\mathsf{g}}^* \mu \right)} & = - \frac{4 \boldsymbol{i} \boldsymbol{\pi}^2}{6} \sigma^* \pi_{\mathsf{g}}^* {\left\langle \mu \wedge {\left[ \mu \wedge \mu \right]} \right\rangle} = \boldsymbol{i} \sigma^* \pi_{\mathsf{g}}^* {\left( h^* \wedge e^* \wedge f^* \right)}.
\end{align}
Since \( \sigma = q \circ s \) and \( \Phi \circ \pi_{\mathsf{g}} \circ q = \id_{F \mathbb{H}^3} \), the desired equality follows from~\eqref{E:hef}.
\end{proof}

\begin{pcor} \label{C:CS}
Let \( M \) be a connected closed oriented hyperbolic\/ \( 3 \)-manifold. Let \( s \) be a global oriented orthonormal smooth frame for \( M \). Let \( \sigma \) be the global smooth section in~\eqref{E:sigma} induced by \( s \). Then
\begin{align}
4 \boldsymbol{i} \boldsymbol{\pi}^2 \CS_{\mathrm{PSL}_2 {\left( \mathbb{C} \right)}} {\left( M, \sigma \right)} & = \Vol(M) + \boldsymbol{i} \boldsymbol{\pi}^2 \CS_{\mathrm{SO}(3)} {\left( M, s \right)}.
\end{align}
\end{pcor}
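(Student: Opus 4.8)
The plan is to derive Corollary~\ref{C:CS} by integrating the differential-form identity~\eqref{E:CS-form} of Proposition~\ref{P:CS-form} over the closed manifold $M$. First I would check that the hypotheses of Proposition~\ref{P:CS-form} are satisfied: a connected closed oriented $3$-manifold is parallelizable, so $M$ carries the global oriented orthonormal smooth frame $s$, which in turn produces the global smooth section $\sigma = q \circ s$ of~\eqref{E:sigma}, and $M$ is compact without boundary as required. Thus Proposition~\ref{P:CS-form} applies and gives the pointwise identity $4 \boldsymbol{i} \boldsymbol{\pi}^2 \sigma^* \cs {\left( \omega_{\hat{\varrho}} \right)} = \mathrm{d} \vol - \frac{1}{4} \, \mathrm{d} {\left( \sum_{\mathrm{cyc}} \varepsilon^i \wedge s^* \bar{\omega}^j_k \right)} + \boldsymbol{i} \boldsymbol{\pi}^2 s^* \cs {\left( \bar{\omega} \right)}$ of $3$-forms on $M$.

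Next I would integrate both sides over $M$. The left-hand side becomes $4 \boldsymbol{i} \boldsymbol{\pi}^2 \int_M \sigma^* \cs {\left( \omega_{\hat{\varrho}} \right)} = 4 \boldsymbol{i} \boldsymbol{\pi}^2 \CS_{\mathrm{PSL}_2 {\left( \mathbb{C} \right)}} {\left( M, \sigma \right)}$ by Definition~\ref{D:CS}. On the right-hand side, $\int_M \mathrm{d} \vol = \Vol(M)$; the middle term $-\frac{1}{4} \int_M \mathrm{d} {\big( \sum_{\mathrm{cyc}} \varepsilon^i \wedge s^* \bar{\omega}^j_k \big)}$ vanishes by Stokes' theorem because $\partial M = \varnothing$; and $\boldsymbol{i} \boldsymbol{\pi}^2 \int_M s^* \cs {\left( \bar{\omega} \right)} = \boldsymbol{i} \boldsymbol{\pi}^2 \CS_{\mathrm{SO}(3)} {\left( M, s \right)}$, again by Definition~\ref{D:CS}. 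Collecting the three contributions yields the claimed equality.

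I do not anticipate any genuine obstacle: all the substantive work is already contained in the form-level identity~\eqref{E:CS-form} established in Proposition~\ref{P:CS-form}, and the corollary is essentially a bookkeeping step. The one point deserving a word of care is the vanishing of the exact middle term, which uses precisely that $M$ is closed --- this is exactly the feature that fails for compact manifolds with boundary and that motivates the renormalization procedure developed in the remainder of the paper.
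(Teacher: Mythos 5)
Your proposal is correct and is exactly the argument the paper intends: Corollary~\ref{C:CS} is stated as an immediate consequence of Proposition~\ref{P:CS-form}, obtained by integrating~\eqref{E:CS-form} over the closed manifold \( M \) and discarding the exact term by Stokes' theorem. Your verification of the hypotheses (parallelizability of closed orientable \( 3 \)-manifolds, completeness from compactness) and your remark on why the exact term matters only in the boundary case match the paper's treatment.
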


\noindent By Proposition~\ref{P:metric-CS}, we see that \( 4 \boldsymbol{i} \boldsymbol{\pi}^2 \CS_{\mathrm{PSL}_2 {\left( \mathbb{C} \right)}} {\left( M \right)} \in \mathbb{C} / 2 \boldsymbol{i} \boldsymbol{\pi}^2 \mathbb{Z} \) does not require the choice of a global smooth section and is a well-defined invariant for a connected closed oriented hyperbolic \( 3 \)-manifold \( M \).

\subsection{Asymptotics of Chern-Simons invariant} \label{SS:asymptotics}

In this subsection, we prove Theorem~\ref{M:B} and Corollary~\ref{M:C}\@. For the asymptotics of the \( \mathrm{PSL}_2 {\left( \mathbb{C} \right)} \)-Chern-Simons invariant, we cannot ignore the exact form in~\eqref{E:CS-form}, since each \( C_r \) has boundary. An important observation is that this mysterious exact form equals the difference between usual mean curvature and Weitzenb{\"o}ck mean curvature as follows.

\begin{pprop} \label{P:exact-form}
Let \( S \) be an oriented smooth surface embedded in an oriented parallelizable Riemannian\/ \( 3 \)-manifold \( M \) with or without boundary. Let \( \iota \colon S \injto M \) be the inclusion. Let \( \pi \colon FM \surjto M \) be the oriented orthonormal frame bundle, and let \( \bar{\omega} \in \Omega^1 {\left( FM, \mathfrak{so}(3) \right)} \) be its Levi-Civita connection. Let \( s \colon M \injto FM \) be a global smooth frame, and let\/ \( {\left( \varepsilon^1, \varepsilon^2, \varepsilon^3 \right)} \) be its dual. Then
\begin{align}
- \iota^* \sum_{\mathrm{cyc}} \varepsilon^i \wedge s^* \bar{\omega}^j_k & = {\left( \bar{H} - H^s \right)} \, \mathrm{d} a.
\end{align}
Here, \( \bar{H} \) is the usual (i.e., Levi-Civita) mean curvature of \( S \).
\end{pprop}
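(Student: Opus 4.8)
The plan is to verify the identity pointwise on $S$, since both sides are tensorial and local. Near a given point of $S$, choose a local oriented orthonormal smooth frame $\left( \bar{E}_1, \bar{E}_2 \right)$ for $S$; then $\left( \bar{E}_1, \bar{E}_2, N \right)$ is a local oriented orthonormal frame for $M$ along $S$, and the induced orientation of $S$ makes $\left( \bar{E}_1, \bar{E}_2 \right)$ positively oriented, so $\mathrm{d} a \left( \bar{E}_1, \bar{E}_2 \right) = 1$ and $\left( \bar{E}_1, \bar{E}_2, N \right)$ and $\left( E_1, E_2, E_3 \right)$ differ by an $\mathrm{SO}(3)$-valued map. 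It therefore suffices to show that $\iota^* \sum_{\mathrm{cyc}} \varepsilon^i \wedge s^* \bar{\omega}^j_k$, evaluated on $\left( \bar{E}_1, \bar{E}_2 \right)$, equals $-\left( \bar{H} - H^s \right)$.

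First I would rewrite the cyclic sum as $\sum_{i=1}^3 \varepsilon^i \wedge \eta^i$ with $\eta^1 = s^* \bar{\omega}^2_3$, $\eta^2 = s^* \bar{\omega}^3_1$, $\eta^3 = s^* \bar{\omega}^1_2$, which is legitimate because $s^* \bar{\omega}$ is $\mathfrak{so}(3)$-valued. Let $\bar{g}$ be the $\mathrm{SO}(3)$-valued smooth map on the neighborhood given by $\left( \bar{E}_1, \bar{E}_2, N \right) = \left( E_1, E_2, E_3 \right) \cdot \bar{g}$, so that $\varepsilon^i \left( \bar{E}_a \right) = \bar{g}^i_a$ and the third column of $\bar{g}$ records the components $N^k$ of $N$ in the frame $s$. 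Using that the first two columns of an $\mathrm{SO}(3)$-matrix cross to its third column, i.e.\ $\bar{g}^i_1 \bar{g}^j_2 - \bar{g}^i_2 \bar{g}^j_1 = \sum_k \epsilon_{kij} N^k$, I obtain
\[
\left( \sum_i \varepsilon^i \wedge \eta^i \right) \left( \bar{E}_1, \bar{E}_2 \right) = \sum_{i,j,k} \epsilon_{kij} \, N^k \, \eta^i \left( E_j \right) = \sum_k N^k \sum_{i,j} \epsilon_{kij} \, \eta^i \left( E_j \right).
\]
Unwinding the definition of $\eta^i$, together with the antisymmetry of $s^* \bar{\omega}$ and the relation $s^* \bar{\omega}^k_m \left( E_m \right) = \I_M \left( E_k, \bar{\del}_{E_m} E_m \right)$ (and $\bar{\omega}^k_k = 0$), the inner sum collapses to $\sum_{i,j} \epsilon_{kij} \eta^i \left( E_j \right) = - \sum_m \I_M \left( E_k, \bar{\del}_{E_m} E_m \right)$, so the whole expression equals $- \sum_m \I_M \left( N, \bar{\del}_{E_m} E_m \right)$.

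Finally I would invoke Proposition~\ref{P:comparison} applied to the Levi-Civita connection, which gives precisely $\sum_m \I_M \left( N, \bar{\del}_{E_m} E_m \right) = \bar{H} - H^s$; combined with $\mathrm{d} a \left( \bar{E}_1, \bar{E}_2 \right) = 1$ this yields $- \iota^* \sum_{\mathrm{cyc}} \varepsilon^i \wedge s^* \bar{\omega}^j_k = \left( \bar{H} - H^s \right) \mathrm{d} a$, which is the claim. An alternative, slightly longer route avoids Proposition~\ref{P:comparison}: pass to the adapted orthonormal frame $\left( \bar{E}_1, \bar{E}_2, N \right)$, use the gauge-transformation law $\tilde{\omega} = \bar{g}^{-1} \left( s^* \bar{\omega} \right) \bar{g} + \bar{g}^{-1} \mathrm{d} \bar{g}$ for the Levi-Civita connection form, observe that $\bar{g}^{-1} \mathrm{d} \bar{g}$ is exactly the Weitzenb{\"o}ck connection form of $\del^s$ in this frame, and read off $\bar{H}$ and $H^s$ from the normal components of the two connection forms via Lemma~\ref{L:Weingarten-equation}. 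Either way, the only real difficulty is bookkeeping: keeping the $\mathrm{SO}(3)$ and $\epsilon$-symbol conventions, the sign in $B^{\del}(X) = - \del_X N$, and the induced orientation of $S$ mutually consistent, so that $\bar{H} - H^s$ emerges with the correct sign. The structural content is routine once these are pinned down.
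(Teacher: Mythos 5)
Your proposal is correct and follows essentially the same route as the paper's proof: evaluate the cyclic sum on an adapted oriented orthonormal frame $\left( \bar{E}_1, \bar{E}_2, N \right)$, use the fact that the first two columns of the $\mathrm{SO}(3)$-valued change-of-frame matrix cross to the third (the paper writes this as $\bar{g}^i_1 \bar{g}^{\ell}_2 - \bar{g}^i_2 \bar{g}^{\ell}_1$ collapsing onto $\bar{g}^k_3 = N^k$ rather than with $\epsilon$-symbols), reduce to $- \sum_j \I_M {\left( N, \bar{\del}_{E_j} E_j \right)}$ via antisymmetry of the connection form, and finish with Proposition~\ref{P:comparison}. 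The sign and orientation bookkeeping you flag works out exactly as in the paper.
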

\begin{proof}
Say \( S \) is oriented by the unit normal vector field \( N \) along \( S \). Let \( {\left( \bar{E}_1, \bar{E}_2 \right)} \) be a local oriented orthonormal smooth frame for \( S \). Let \( \bar{g} \) be the \( \mathrm{SO}(3) \)-valued smooth map given by \( {\left( \bar{E}_1, \bar{E}_2, N \right)} = {\left( E_1, E_2, E_3 \right)} \cdot \bar{g} \). Then
\begin{align*}
& - \sum_{\mathrm{cyc}} \varepsilon^i \wedge s^* \bar{\omega}^j_k {\left( \bar{E}_1, \bar{E}_2 \right)} = - \sum_{\mathrm{cyc}} \sum_{\ell = 1}^3 {\left( \bar{g}^i_1 \bar{g}^{\ell}_2 - \bar{g}^i_2 \bar{g}^{\ell}_1 \right)} s^* \bar{\omega}^j_k {\left( E_{\ell} \right)} \\
& \qquad = - \sum_{\substack{i, j, k \\ \mathrm{cyc}}} {\left( \bar{g}^k_3 s^* \bar{\omega}^j_k {\left( E_j \right)} - \bar{g}^j_3 s^* \bar{\omega}^j_k {\left( E_k \right)} \right)} = - \sum_{j, k = 1}^3 \bar{g}^k_3 s^* \bar{\omega}^j_k {\left( E_j \right)} \\
& \qquad = - \sum_{j, k = 1}^3 N^k \, \I_M {\left( E_j, \bar{\del}^M_{E_j} E_k \right)} = \sum_{j = 1}^3 \I_M {\left( \bar{\del}^M_{E_j} E_j, N \right)} = \bar{H} - H^s
\end{align*}
by Proposition~\ref{P:comparison}.
\end{proof}

Suppose the setup of Theorem~\ref{M:B}\@. By Propositions~\ref{P:CS-form} and~\ref{P:exact-form}, we have
\begin{align}
4 \boldsymbol{i} \boldsymbol{\pi}^2 \int_{C_{\rho}} \sigma^* \cs {\left( \omega_{\hat{\varrho}} \right)} & = \Vol {\left( C_{\rho} \right)} + \frac{1}{4} \int_{\partial C_{\rho}} {\left( \bar{H} - H^s \right)} \, \mathrm{d} a_{\rho} + \boldsymbol{i} \boldsymbol{\pi}^2 \int_{C_{\rho}} s^* \cs {\left( \bar{\omega} \right)} \label{E:integral}
\end{align}
for all \( \rho \ge 0 \). Here, the volume plus the quarter of the integrated mean curvature is exactly the \emph{\( \mathrm{W} \)-volume} introduced in Definition~3.1 of~\cite{MR2386723}. (The sign difference is due to the different choice of orientation.) The asymptotics of \( \mathrm{W} \)-volume is computed in Lemma~4.2 of~\cite{MR2386723} as follows. For all \( \rho \ge 0 \),
\begin{align}
\Vol {\left( C_{\rho} \right)} + \frac{1}{4} \int_{\partial C_{\rho}} \bar{H} \, \mathrm{d} a_{\rho} & = \mathrm{W} {\left( C_{\rho} \right)} = - \boldsymbol{\pi} \rho \chi {\left( \partial C \right)} + \mathrm{W} {\left( C \right)}.
\end{align}
Since we know the last integral of~\eqref{E:integral} by Theorem~\ref{M:A}, it remains to find the asymptotics of the integrated Weitzenb{\"o}ck mean curvature. Say \( s = {\left( E_1, E_2, E_3 \right)} \). For convenience, let \( N^i = \I_M {\left( E_i, \frac{\partial}{\partial r} \right)} \) for each \( i \in \{ 1, 2, 3 \} \). For each \( i \in \{ 1, 2, 3 \} \), \( r \mapsto {\left. N^i \right|}_{\partial C_r} \circ u_r^{-1} \) is constant on \( [0, \infty) \). For each \( x \in \partial C \), Proposition~\ref{P:mean-curvature-torsion} gives
\begin{align*}
{\big. H^s \big|}_{u_{\rho}^{-1}(x)} & = - \sum_{i = 1}^3 {\left. E_i^{\top} \right|}_{u_{\rho}^{-1}(x)} {\left( N^i \right)} = - \sum_{i = 1}^3 {\left( u_{\rho}^{-1} \right)}_* A_{\rho}^{-1} {\left. E_i^{\top} \right|}_x {\left( N^i \right)} \\
& = - \sum_{i = 1}^3 A_{\rho}^{-1} {\left. E_i^{\top} \right|}_x {\left( N^i \right)}, \\
{\big. H^{s^{\infty}} \big|}_{v^{-1}(x)} & = - \sum_{i = 1}^3 {\left. {\left( E_i^{\infty} \right)} {}^{\top} \right|}_{v^{-1}(x)} {\left( N^i \circ v \right)} = - \sum_{i = 1}^3 {\left( v^{-1} \right)}_* V^{-1} {\left. E_i^{\top} \right|}_x {\left( N^i \circ v \right)} \\
& = - \sum_{i = 1}^3 V^{-1} {\left. E_i^{\top} \right|}_x {\left( N^i \right)}.
\end{align*}
Therefore, for all \( \rho \ge 0 \),
\begin{align*}
& - \frac{1}{4} \int_{\partial C_{\rho}} H^s \, \mathrm{d} a_{\rho} = \frac{1}{4} \int_{\partial C} \sum_{i = 1}^3 A_{\rho}^{-1} E_i^{\top} {\left( N^i \right)} \det \! {\left( A_{\rho} \right)} \, \mathrm{d} a \\
& \qquad = \frac{1}{4} \int_{\partial C} \sum_{i = 1}^3 \adj \! {\left( A_{\rho} \right)} \, E_i^{\top} {\left( N^i \right)} \, \mathrm{d} a = \frac{\boldsymbol{e}^{\rho}}{4 \sqrt{2}} \int_{\partial C} \sum_{i = 1}^3 \adj \! {\left( V \right)} \, E_i^{\top} {\left( N^i \right)} \, \mathrm{d} a + \boldsymbol{e}^{-\rho} \mathcal{J} \\
& \qquad = \frac{\boldsymbol{e}^{\rho}}{4 \sqrt{2}} \int_{\partial C} \sum_{i = 1}^3 V^{-1} E_i^{\top} {\left( N^i \right)} \det \! {\left( V \right)} \, \mathrm{d} a + \boldsymbol{e}^{-\rho} \mathcal{J} \\
& \qquad = - \frac{\boldsymbol{e}^{\rho}}{4 \sqrt{2}} \int_{\partial C^{\infty}} H^{s^{\infty}} \, \mathrm{d} a^{\infty} + \boldsymbol{e}^{-\rho} \mathcal{J},
\end{align*}
where \( \mathcal{J} \) is an integral independent of \( \rho \). Consequently, we find that~\eqref{E:integral} becomes
\begin{align}
\begin{aligned}
4 \boldsymbol{i} \boldsymbol{\pi}^2 \int_{C_{\rho}} \sigma^* \cs {\left( \omega_{\hat{\varrho}} \right)} & = \begin{aligned}[t]
& - \frac{\boldsymbol{e}^{\rho}}{4 \sqrt{2}} \int_{\partial C^{\infty}} {\left( H {\left( s^{\infty} \right)} \, \mathrm{d} a^{\infty} + \boldsymbol{i} \tau {\left( s^{\infty} \right)} \right)} - \boldsymbol{\pi} \rho \chi {\left( \partial C \right)} \\
& + (\textnormal{convergent terms})
\end{aligned}
\end{aligned}
\end{align}
as \( \rho \to \infty \). This completes the proof of Theorem~\ref{M:B}\@. By Lemma~4.5 of~\cite{MR2386723}, the renormalized volume and the renormalized \( \mathrm{W} \)-volume satisfies
\begin{align}
\Vol^{\textnormal{R}} {\left( M, C \right)} & = \mathrm{W}^{\textnormal{R}} {\left( M, C \right)} + \frac{\boldsymbol{\pi}}{2} \chi {\left( \partial_{\infty} M \right)}.
\end{align}
We obtain Corollary~\ref{M:C} as well.

\begin{prmrk} \label{R:variation}
Given a convex-cocompact hyperbolic \( 3 \)-manifold, the renormalized volume \( \Vol^{\textnormal{R}} \) defines a smooth function on the Teichm{\"u}ller space of the conformal boundary, which locally (i.e., up to quotient) serves as the deformation space of convex-cocompact hyperbolic structures. The variational formula of renormalized volume in Lemma~8.5 of~\cite{MR2386723} (originally appearing in~\cite{MR0889594} and~\cite{MR1997440}) states that the differential on the Teichm{\"u}ller space satisfies \( \mathrm{d} \Vol^{\textnormal{R}} = \re \theta \) for a certain holomorphic \( 1 \)-form \( \theta \) on the Teichm{\"u}ller space given by the difference between two natural complex projective structures on the conformal boundary. Motivated by this, one might ask whether the equality
\begin{align}
\mathrm{d} {\big( \Vol^{\textnormal{R}} + \boldsymbol{i} \boldsymbol{\pi}^2 \CS_{\mathrm{SO}(3)}^{\textnormal{R}} \big)} & = \theta \label{E:attempt}
\end{align}
defines the renormalized \( \mathrm{SO}(3) \)-Chern-Simons invariant, without going through the renormalization procedure. If this is the case, then
\begin{align}
\begin{gathered}
\boldsymbol{\pi}^2 \mathrm{d} \CS_{\mathrm{SO}(3)}^{\textnormal{R}} = \im \theta, \\
\boldsymbol{\pi}^2 \partial \CS_{\mathrm{SO}(3)}^{\textnormal{R}} = - \frac{\boldsymbol{i}}{2} \theta = - \boldsymbol{i} \, \partial \Vol^{\textnormal{R}}, \qquad \boldsymbol{\pi}^2 \bar{\partial} \CS_{\mathrm{SO}(3)}^{\textnormal{R}} = \frac{\boldsymbol{i}}{2} \bar{\theta} = \boldsymbol{i} \, \bar{\partial} \Vol^{\textnormal{R}},
\end{gathered}
\end{align}
which imply that
\begin{align}
\bar{\partial} {\big( \Vol^{\textnormal{R}} + \boldsymbol{i} \boldsymbol{\pi}^2 \CS_{\mathrm{SO}(3)}^{\textnormal{R}} \big)} & = 0,
\end{align}
i.e., they form a holomorphic function. This is not possible, as the renormalized volume \( \Vol^{\textnormal{R}} \) is known to become a K{\"a}hler potential for the Weil-Petersson metric: \( \partial \bar{\partial} \Vol^{\textnormal{R}} = 2 \boldsymbol{i} \omega_{\textnormal{WP}} \). Therefore, the attempt~\eqref{E:attempt} fails.
\end{prmrk}

\begin{prmrk}
It is possible to generalize Theorem~\ref{M:A}, Theorem~\ref{M:B}, and Corollary~\ref{M:C} to arbitrary flat connections, not only global smooth frames. First of all, the Chern-Simons invariant can be generalized by replacing \( \omega \) with \( \omega - \omega' \). Indeed, if \( \omega \) and \( \omega' \) are two connections on a smooth principal \( G \)-bundle \( \pi \colon P \surjto M \) with curvatures \( \Omega \) and \( \Omega' \), then the pullback of
\begin{align}
& {\left\langle {\left( \omega - \omega' \right)} \wedge {\left\langle \Omega - \Omega' \right\rangle} \right\rangle} - \frac{1}{6} {\left\langle {\left( \omega - \omega' \right)} \wedge {\left[ {\left( \omega - \omega' \right)} \wedge {\left( \omega - \omega' \right)} \right]} \right\rangle} \in \Omega^3 {\left( P, \mathbb{C} \right)}
\end{align}
via a local smooth section does not depend on the choice of the local smooth section, so it produces a globally well-defined complex-valued \( 3 \)-form on \( M \). In particular, if the connection \( \omega' \) is chosen to be the Weitzenb{\"o}ck connection \( \omega^{\sigma} \) induced by a global smooth section \( \sigma \colon M \injto P \), then it coincides with the familiar pullback \( \sigma^* \cs {\left( \omega \right)} \), since \( \sigma^* \omega^{\sigma} = 0 \).

Then the notion of constant global smooth frame can be generalized to the notion of \emph{constant flat connection} by considering horizontal sections (cf.~Remark~\ref{R:flat-connection}). Also, we can define the \emph{flat connection at infinity} via the isomorphism in the left commutative diagram of~\eqref{E:frame-at-infinity}. Now, we can generalize Theorem~\ref{M:A}, Theorem~\ref{M:B}, and Corollary~\ref{M:C} to constant flat connections so that the original ones are the Weitzenb{\"o}ck case of the generalized version. The idea is to consider a smooth triangulation \( S = \bigcup_{i = 1}^n \Delta^i \) whose triangles are sufficiently small so that each triangle \( \Delta^i \) has an open neighborhood \( U^i \) in \( S \) and a horizontal section \( s^i \colon [U^i_0, U^i_{\infty}) \injto {\left. FM \right|}_{[U^i_0, U^i_{\infty})} \) that is constant along the foliation \( {\left\{ U_r \right\}}_{r \in [0, \infty)} \). However, it is not clear whether there are cases in which such a generalization is truly meaningful, since we know that there already exist a lot of constant global smooth frames (Proposition~\ref{P:constant-frame}).
\end{prmrk}

\appendix

\section{Lemmas and proofs}

We state several omitted lemmas, computations, and proofs.

\subsection{Lemmas in geometry and analysis}

We enumerate three lemmas here.

\begin{plem}[Smooth approximation theorem] \label{L:SAT}
Let \( M \) and \( N \) be smooth manifolds without boundary. Suppose that \( N \) is compact. Let \( f \colon M \to N \) be a continuous map that is smooth on a closed subset \( A \) of \( M \). Then there exists a smooth map \( g \colon M \to N \) such that \( f = g \) on \( A \) and \( f \simeq g \ \mathrm{rel} \, A \).
\end{plem}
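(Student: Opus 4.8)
The plan is to reduce to the classical Whitney approximation theorem for maps into Euclidean space, using a Whitney embedding of $N$ together with a tubular-neighborhood retraction. Here ``smooth on $A$'' is understood in the usual sense that $f$ is the restriction to $A$ of some smooth map defined on an open neighborhood of the closed set $A$ in $M$. Since $N$ is compact, the Whitney embedding theorem provides a smooth embedding $N \injto \mathbb{R}^k$ for some $k$; identify $N$ with its image and fix a tubular neighborhood $U \subseteq \mathbb{R}^k$ of $N$ together with a smooth retraction $p \colon U \to N$ with $p|_N = \id_N$. It then suffices to construct a smooth map $\tilde{g} \colon M \to \mathbb{R}^k$ such that (i) $\tilde{g} = f$ on $A$; (ii) $\tilde{g}(M) \subseteq U$; and (iii) for each $x \in M$ the straight segment from $f(x)$ to $\tilde{g}(x)$ lies in $U$. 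Granting this, $g = p \circ \tilde{g} \colon M \to N$ is smooth, and $g = p \circ f = f$ on $A$ because $f(A) \subseteq N$ and $p$ fixes $N$ pointwise; furthermore $H(x,t) = p\bigl((1-t)f(x) + t\,\tilde{g}(x)\bigr)$ is a well-defined homotopy from $f$ to $g$, and it is rel $A$ since $\tilde{g} = f$ on $A$ makes $H(x,t) = f(x)$ there for all $t$.

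To build $\tilde{g}$, I would apply the relative version of Euclidean Whitney approximation. By hypothesis $f$ extends to a smooth map $f_0$ on some open neighborhood $V \supseteq A$. On the open set $M \setminus A$ the map $f$ is only continuous, but the standard partition-of-unity construction produces, for any prescribed continuous positive tolerance, a smooth map on $M \setminus A$ lying within that tolerance of $f$. Patching $f_0$ with such an approximation via a partition of unity $\{\varphi_0, \varphi_1\}$ subordinate to the cover $\{V, M \setminus A\}$, chosen so that $\varphi_0 \equiv 1$ on a neighborhood $V' \subseteq V$ of $A$, yields a global smooth $\tilde{g}$ that coincides with $f_0$ on $V'$; in particular $\tilde{g} = f$ on $A$, which is (i). For (ii) and (iii), one prescribes the tolerance through a continuous function $\delta \colon M \to (0,\infty)$ with the property that the open $\delta(x)$-ball about $f(x)$ in $\mathbb{R}^k$ is contained in $U$ for every $x$; such a $\delta$ exists because $f(x) \in N \subseteq U$ with $U$ open, and the convexity of these balls then also forces the interpolating segments into $U$, giving (iii). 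Using this position-dependent bound rather than a uniform one is precisely what lets the argument tolerate a noncompact $M$.

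The only genuinely delicate point is the relative aspect --- that $g$ must equal $f$ \emph{exactly} on $A$, not just approximately. This is handled by the normalization $\varphi_0 \equiv 1$ near $A$, which makes $\tilde{g}$ literally equal to the smooth extension $f_0$ on a neighborhood of $A$, so no error is incurred there and the homotopy $H$ is stationary on $A$. Everything else is the routine bookkeeping of Whitney approximation, and the statement is a standard relative smooth approximation theorem in differential topology.
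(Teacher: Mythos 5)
Your proof is correct and is essentially the standard argument; the paper itself offers no proof, merely citing Theorem~11.8 in Chapter~II of Bredon, whose proof runs along exactly these lines (Whitney embedding of the compact target, tubular-neighborhood retraction, and relative Euclidean approximation with a position-dependent tolerance). The only bookkeeping you leave implicit is that the neighborhood $V$ must also be shrunk so that the smooth extension $f_0$ is itself $\delta$-close to $f$ on it --- otherwise the blended map $\varphi_0 f_0 + \varphi_1 h$ need not lie in the $\delta(x)$-ball about $f(x)$ where $0 < \varphi_0 < 1$, and $\tilde{g} = f_0$ on $V'\setminus A$ is not otherwise controlled --- but since $f_0 - f$ is continuous and vanishes on $A$ while $\delta$ is continuous and positive, this is immediate.
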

\begin{proof}
See, e.g., Theorem~11.8 in Chapter~II of~\cite{MR1224675}.
\end{proof}

\begin{plem} \label{L:curvature}
Let\/ \( (M, g) \) be a Riemannian \( n \)-manifold with or without boundary having constant sectional curvature \( \kappa \). Let\/ \( {\left( E_1, \dotsc, E_n \right)} \) be a local smooth frame for \( M \), and let\/ \( {\left( \varepsilon^1, \dotsc, \varepsilon^n \right)} \) be its dual. Then the Levi-Civita curvature\/ \( 2 \)-forms satisfy
\begin{align}
\bar{\Omega}^i_j & = \sum_{k = 1}^n \kappa g_{jk} \varepsilon^i \wedge \varepsilon^k.
\end{align}
\end{plem}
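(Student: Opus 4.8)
The plan is to convert the second-structure-equation statement into an identity about the Riemann curvature endomorphism, invoke the explicit form of the curvature tensor in constant sectional curvature, and match coefficients on a frame.

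First I would record the relation between the curvature $2$-forms and the Riemann tensor. With the conventions implicit in the paper (curvature $\bar{\Omega} = \mathrm{d} \bar{\omega} + \bar{\omega} \wedge \bar{\omega}$ for matrix-valued forms, connection $1$-forms determined by $\bar{\del}_X E_j = \bar{\omega}^i_j(X) E_i$, and $R(X, Y) Z = \bar{\del}_X \bar{\del}_Y Z - \bar{\del}_Y \bar{\del}_X Z - \bar{\del}_{[X, Y]} Z$), a direct expansion of $\bar{\del}_X \bar{\del}_Y E_j - \bar{\del}_Y \bar{\del}_X E_j - \bar{\del}_{[X, Y]} E_j$ groups into $\mathrm{d} \bar{\omega}^i_j(X, Y) E_i$ plus $(\bar{\omega}^i_k \wedge \bar{\omega}^k_j)(X, Y) E_i$, so that $R(X, Y) E_j = \bar{\Omega}^i_j(X, Y) E_i$. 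This step uses nothing about constant curvature.

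Next I would use the standard fact that a Riemannian manifold has constant sectional curvature $\kappa$ iff $R(X, Y) Z = \kappa {\left( g(Y, Z) X - g(X, Z) Y \right)}$ (which one checks in one line by computing $\I{\left( R(X, Y) Y, X \right)}$ on orthonormal $X, Y$). Finally, since a $2$-form is determined by its values on pairs of frame vectors, it suffices to evaluate both sides of the claimed identity on $(E_a, E_b)$. On the left, the two previous steps give $\bar{\Omega}^i_j(E_a, E_b) = \varepsilon^i{\left( R(E_a, E_b) E_j \right)} = \kappa {\left( g_{bj} \delta^i_a - g_{aj} \delta^i_b \right)}$. On the right, $\sum_k \kappa g_{jk} {\left( \varepsilon^i \wedge \varepsilon^k \right)}(E_a, E_b) = \kappa {\left( g_{jb} \delta^i_a - g_{ja} \delta^i_b \right)}$. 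These coincide by symmetry of $g$, which proves the lemma.

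I do not expect a substantive obstacle; the only thing requiring care is bookkeeping of the sign and index conventions — in particular ensuring the paper's convention for $\bar{\Omega}^i_j$ relative to $R(X, Y) E_j$ and its sign convention for the curvature tensor are applied consistently, so that the constant on the right-hand side is $+\kappa$ rather than $-\kappa$.
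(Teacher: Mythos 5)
Your proposal is correct and follows essentially the same route as the paper: identify \( \bar{\Omega}^i_j(X,Y) E_i \) with \( \bar{R}(X,Y) E_j \), substitute the constant-curvature formula \( \bar{R}(X,Y)Z = \kappa {\left( g(Y,Z)X - g(X,Z)Y \right)} \), and match components against \( \kappa g_{jk}\, \varepsilon^i \wedge \varepsilon^k \). The only difference is cosmetic — you evaluate on frame vectors \( {\left( E_a, E_b \right)} \) while the paper expands general \( X, Y \) in components — and your extra derivation of the structure-equation identity is detail the paper simply cites.
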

\begin{proof}
The curvature of \( M \) satisfies \( \bar{R} (X, Y) Z = \kappa {\left( g(Y, Z) X - g(X, Z) Y \right)} \) for all \( X, Y, Z \in \Gamma(TM) \) (e.g., Proposition~8.36 of~\cite{MR3887684}). Therefore, we have
\begin{align*}
\bar{\Omega}^i_j(X, Y) E_i & = \bar{R} (X, Y) E_j = \kappa g_{jk} {\left( Y^k X - X^k Y \right)} = \kappa g_{jk} \varepsilon^i \wedge \varepsilon^k {\left( X, Y \right)} E_i
\end{align*}
for all \( X, Y \in \Gamma(TM) \), where the Einstein summation convention is assumed.
\end{proof}

\begin{plem} \label{L:limit}
Let \( R \) be a compact oriented Riemannian manifold with or without boundary, and let\/ \( \mathrm{d} v \) be its volume form. Let \( a, b, c, \kappa_1, \kappa_2 \colon R \to \mathbb{R} \) be smooth functions. Suppose that \( \kappa_1 \) and \( \kappa_2 \) are nonpositive. For each \( r \in [0, \infty) \), let
\begin{align*}
f_r & = \frac{a \cosh^2(r) + b \cosh(r) \sinh(r) + c \sinh^2(r)}{{\left( \cosh(r) - \kappa_1 \sinh(r) \right)} {\left( \cosh(r) - \kappa_2 \sinh(r) \right)}} \quad \text{and} \quad \ell = \frac{a+b+c}{{\left( 1 - \kappa_1 \right)} {\left( 1 - \kappa_2 \right)}}.
\end{align*}
Then the following limits both converge. In fact, the former converges to zero.
\begin{align}
& \lim_{\rho \to \infty} \int_R {\left( f_{\rho} - \ell \right)} \, \mathrm{d} v, \qquad \lim_{\rho \to \infty} \int_R \int_0^{\rho} {\left( f_r - \ell \right)} \, \mathrm{d} r \, \mathrm{d} v.
\end{align}
\end{plem}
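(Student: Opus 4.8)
The plan is to reduce both claims to a single elementary uniform estimate, namely that $f_r - \ell$ decays like $\boldsymbol{e}^{-2r}$ uniformly on the compact manifold $R$. The first step is a change of variable: setting $t = \tanh(r) \in [0, 1)$ and dividing the numerator and denominator of $f_r$ by $\cosh^2(r)$, using $\cosh(r)\sinh(r)/\cosh^2(r) = t$ and $\sinh^2(r)/\cosh^2(r) = t^2$, one rewrites $f_r = g(t, {}\cdot{})$, where
\begin{align*}
g(t, x) & = \frac{a(x) + b(x) t + c(x) t^2}{{\left( 1 - \kappa_1(x) t \right)} {\left( 1 - \kappa_2(x) t \right)}}.
\end{align*}
Because $\kappa_1, \kappa_2 \le 0$, each factor $1 - \kappa_i(x) t$ is $\ge 1$ for every $t \in [0, 1]$ and $x \in R$, so the denominator never vanishes and $g$ is smooth on $[0, 1] \times R$. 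The key observation to record is that $\ell(x) = g(1, x)$, so that $f_r - \ell = g(t, {}\cdot{}) - g(1, {}\cdot{})$ with $t = \tanh(r)$.

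Next I would estimate this difference. Since $R$ is compact and $g$ is smooth on $[0, 1] \times R$, the constant $C = \sup_{[0, 1] \times R} {\left| \partial_t g \right|}$ is finite, and the mean value theorem in the $t$-variable gives ${\left| g(t, x) - g(1, x) \right|} \le C (1 - t)$ for all $t \in [0, 1]$. Combining this with the identity $1 - \tanh(r) = \frac{2}{\boldsymbol{e}^{2r} + 1} \le 2 \boldsymbol{e}^{-2r}$ yields the uniform bound
\begin{align*}
\sup_{x \in R} {\left| f_r(x) - \ell(x) \right|} & \le 2 C \boldsymbol{e}^{-2r} \qquad (r \ge 0).
\end{align*}

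Finally I would conclude. For the first limit, ${\left| \int_R {\left( f_\rho - \ell \right)} \, \mathrm{d} v \right|} \le 2 C \boldsymbol{e}^{-2\rho} \vol(R) \to 0$, which in particular establishes that this limit is zero. For the second, the bound shows $\int_0^\infty {\left| f_r(x) - \ell(x) \right|} \, \mathrm{d} r \le C$ uniformly in $x \in R$, so $\rho \mapsto \int_0^\rho {\left( f_r - \ell \right)} \, \mathrm{d} r$ converges uniformly on $R$ as $\rho \to \infty$ to a bounded continuous function, and hence $\int_R \int_0^\rho {\left( f_r - \ell \right)} \, \mathrm{d} r \, \mathrm{d} v$ converges. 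I do not anticipate a genuine obstacle here; the only point requiring care is to notice that $\ell$ is exactly the value at $t = 1$ of the rational function defining $f_r$, and that the sign hypothesis $\kappa_1, \kappa_2 \le 0$ is precisely what keeps the denominator bounded away from zero and makes the $\boldsymbol{e}^{-2r}$ decay available.
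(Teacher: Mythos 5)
Your proof is correct, and it reaches the same uniform exponential decay estimate as the paper but by a different mechanism. The paper's proof computes the difference \( f_r - \ell \) explicitly over a common denominator, exhibiting it as \( \boldsymbol{e}^{-r} {\left( u \cosh(r) + v \sinh(r) \right)} \) divided by the product of the two denominators, and then bounds everything using \( \bar{\kappa}_i = \max_R \kappa_i \le 0 \) before invoking Cauchy's criterion for the iterated integral. You instead substitute \( t = \tanh(r) \), recognize \( f_r = g(\tanh(r), {}\cdot{}) \) and \( \ell = g(1, {}\cdot{}) \) for a single function \( g \) that is smooth on the compact set \( [0,1] \times R \) (the hypothesis \( \kappa_1, \kappa_2 \le 0 \) keeping the denominator \( \ge 1 \)), and then get the bound \( {\left| f_r - \ell \right|} \le C {\left( 1 - \tanh(r) \right)} \le 2 C \boldsymbol{e}^{-2r} \) from the mean value theorem and compactness. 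This avoids the explicit algebraic computation of the functions \( u \) and \( v \) entirely and makes transparent why \( \ell \) is the right limit (it is the value at \( t = 1 \)); it also yields the sharp uniform rate \( \boldsymbol{e}^{-2r} \) directly, whereas the paper's bound \( \boldsymbol{e}^{-r} g(r) \) with \( g(r) \to 0 \) encodes the same rate less explicitly. The concluding steps — direct estimation for the first limit, uniform convergence (equivalently, uniform integrability in \( r \)) for the second — match the paper's in substance. No gaps.
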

\begin{proof}
For each \( r \in [0, \infty) \), a direct computation yields
\begin{align*}
f_r - \ell & = \frac{\boldsymbol{e}^{-r} {\left( u \cosh(r) + v \sinh(r) \right)}}{{\left( 1 - \kappa_1 \right)} {\left( 1 - \kappa_2 \right)} {\left( \cosh(r) - \kappa_1 \sinh(r) \right)} {\left( \cosh(r) - \kappa_2 \sinh(r) \right)}}
\end{align*}
for some smooth functions \( u, v \colon R \to \mathbb{R} \). Therefore, for each \( r \in [0, \infty) \),
\begin{align*}
{\left| f_r - \ell \right|} & \le \frac{\boldsymbol{e}^{-r} {\left( |u| \cosh(r) + |v| \sinh(r) \right)}}{{\left( 1 - \bar{\kappa}_1 \right)} {\left( 1 - \bar{\kappa}_2 \right)} {\left( \cosh(r) - \bar{\kappa}_1 \sinh(r) \right)} {\left( \cosh(r) - \bar{\kappa}_2 \sinh(r) \right)}},
\end{align*}
where \( \bar{\kappa}_i = \max_R \kappa_i \in (-\infty, 0] \) for each \( i \in \{ 1, 2 \} \). Then, for each \( r \in [0, \infty) \),
\begin{align*}
\int_R {\left| f_r - \ell \right|} \, \mathrm{d} v & \le \frac{\boldsymbol{e}^{-r} {\left( {\left( \int_R |u| \, \mathrm{d} v \right)} \cosh(r) + {\left( \int_R |v| \, \mathrm{d} v \right)} \sinh(r) \right)}}{{\left( 1 - \bar{\kappa}_1 \right)} {\left( 1 - \bar{\kappa}_2 \right)} {\left( \cosh(r) - \bar{\kappa}_1 \sinh(r) \right)} {\left( \cosh(r) - \bar{\kappa}_2 \sinh(r) \right)}} \\
& = \boldsymbol{e}^{-r} g(r),
\end{align*}
where \( g(r) \to 0 \) as \( r \to \infty \). This proves the former. For large \( \rho_1 \) and \( \rho_2 \),
\begin{align*}
{\left| \int_R \int_{\rho_1}^{\rho_2} {\left( f_r - \ell \right)} \, \mathrm{d} r \, \mathrm{d} v \right|} & \le \int_{\rho_1}^{\rho_2} \boldsymbol{e}^{-r} g(r) \, \mathrm{d} r \le \int_{\rho_1}^{\rho_2} \boldsymbol{e}^{-r} \, \mathrm{d} r \longto 0
\end{align*}
as \( \rho_1, \rho_2 \to \infty \). This proves the latter by Cauchy's criterion.
\end{proof}

\subsection{Lemmas in Section~\ref{S:asymptotics}}

We assume the setup of Section~\ref{S:asymptotics} in this subsection.

\begin{plem} \label{L:self-adjoint}
For each \( r \in (-\varepsilon, \infty) \) and each \( n \in \mathbb{Z}_{\ge 0} \), \( B^n A_r^{-1} \) is self-adjoint.
\end{plem}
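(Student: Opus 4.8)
The plan is to reduce the statement to the elementary observation that the Weingarten map $B = B_0$ of $S = S_0$ is self-adjoint with respect to the metric $\I = \I_0$, and that operators which are polynomials in a common self-adjoint operator are simultaneously self-adjoint and commuting, so that their products stay self-adjoint. Here ``self-adjoint'' is always meant with respect to $\I$ on $T_x S$; equivalently, the matrix of the operator in any $\I$-orthonormal basis of $T_x S$ (such as $\bigl( {\left. \bar{E}_1 \right|}_x, {\left. \bar{E}_2 \right|}_x \bigr)$ in Subsection~\ref{SS:I}) is symmetric, which is the form in which the lemma is applied.

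First I would recall that for the hypersurface $S$ embedded in the Riemannian $3$-manifold $M$ with its torsion-free Levi-Civita connection, the second fundamental form is symmetric, so $B$ satisfies $\I {\left( B X, Y \right)} = \I {\left( X, B Y \right)}$ for all $X, Y \in \Gamma {\left( T S \right)}$; that is, $B$ is self-adjoint with respect to $\I$. Next, since $A_r = \cosh(r) I - \sinh(r) B$ is a polynomial in $B$ (with constant scalar coefficients on each $T_x S$), it is again self-adjoint with respect to $\I$ and commutes with $B$. As noted right after the definition of $A_r$, each $A_r$ is a linear automorphism of $T_x S$, so $A_r^{-1}$ exists; the inverse of an invertible self-adjoint operator is self-adjoint, and an operator commuting with an invertible operator commutes with its inverse, so $A_r^{-1}$ is self-adjoint with respect to $\I$ and commutes with $B$ (hence with $B^n$ for every $n$).

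Finally, $B^n$ is self-adjoint as a power of a self-adjoint operator, and it commutes with $A_r^{-1}$; for two commuting $\I$-self-adjoint operators $P$ and $Q$ one has $\I {\left( P Q X, Y \right)} = \I {\left( Q X, P Y \right)} = \I {\left( X, Q P Y \right)} = \I {\left( X, P Q Y \right)}$, so $B^n A_r^{-1}$ is self-adjoint with respect to $\I$. I do not anticipate a genuine obstacle here; the only point requiring a word of care is the convention that self-adjointness (equivalently, symmetry of the matrix) is taken relative to $\I$ rather than the leaf metric $\I_r$, which is exactly the metric for which $B$ and all polynomials in $B$ are symmetric.
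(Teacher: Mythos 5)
Your proposal is correct. It rests on the same underlying facts as the paper's argument --- that $B$ is self-adjoint with respect to $\I$ on $T_xS$ (symmetry of the second fundamental form for the Levi-Civita connection) and that $A_r$ is a polynomial in $B$ --- but the route is genuinely different in execution. You argue directly: a polynomial in a self-adjoint operator is self-adjoint and commutes with it, the inverse of an invertible self-adjoint operator is self-adjoint and inherits the commutation, and the product of two commuting self-adjoint operators is self-adjoint; this immediately gives the claim for all $n$ at once, and in fact yields the more general statement that $p(B)\,q(B)^{-1}$ is self-adjoint for any polynomials $p,q$ with $q(B)$ invertible. The paper instead runs an induction on $n$, using the identity $I = \cosh(r)A_r^{-1} - \sinh(r)BA_r^{-1}$ to derive the recursion $B^{n+1}A_r^{-1} = \coth(r)\,B^nA_r^{-1} - \csch(r)\,B^n$, which expresses each new operator as a linear combination of operators already known to be self-adjoint; this forces a separate (trivial) treatment of $r=0$ because of the $\coth$ and $\csch$ factors, a case split your argument avoids. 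Your closing remark about the convention --- self-adjointness relative to $\I$ on $T_xS$, equivalently symmetry of the matrix in the $\I$-orthonormal basis $\bigl(\bar{E}_1|_x,\bar{E}_2|_x\bigr)$ --- is exactly the form in which the lemma is used in Subsection~\ref{SS:I}, so no gap remains.
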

\begin{proof}
The case \( r = 0 \) is obvious, so consider the case \( r \ne 0 \). We use induction on \( n \in \mathbb{Z}_{\ge 0} \). Since \( A_r \) is self-adjoint, the base case \( n = 0 \) holds. We have
\begin{align*}
I & = \cosh(r) A_r^{-1} - \sinh(r) B A_r^{-1}, \\
B^{n+1} A_r^{-1} & = B^n {\left( \coth(r) A_r^{-1} - \csch(r) I \right)} = \coth(r) B^n A_r^{-1} - \csch(r) B^n.
\end{align*}
This completes the induction.
\end{proof}

\begin{plem} \label{L:Lie-bracket}
Fix \( x \in R \) and let \( x_r = u_r^{-1}(x) \in R_r \). For each \( j \in \{ 1, 2 \} \), we have
\begin{align*}
A_r {\left( u_r \right)}_* {\left. {\left[ \bar{E}_1, \bar{E}_2 \right]} \right|}_{x_r} & = \frac{A_r {\left. {\left[ \bar{E}_1, \bar{E}_2 \right]} \right|}_x + \sinh(r) {\left. {\left( \bar{E}_1 (B) \bar{E}_2 - \bar{E}_2 (B) \bar{E}_1 \right)} \right|}_x}{\det {\left. A_r \right|}_x}, \\
A_r {\left( u_r \right)}_* {\left. {\left[ \bar{E}_j, \frac{\partial}{\partial r} \right]} \right|}_{x_r}^{\top} & = \sum_{\ell = 1}^2 {\left( A_r|_x^{-1} \right)}^{\ell}_j \, A_r {\left. {\left[ \bar{E}_{\ell}, \frac{\partial}{\partial r} \right]} \right|}_x^{\top} + \sinh(r) {\left( I - B^2 \right)} A_r^{-1} {\left. \bar{E}_j \right|}_x.
\end{align*}
\end{plem}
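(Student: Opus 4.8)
The plan is to unwind the constant-frame extension of $\bar{s}$ and transport every bracket back to the base surface $R$ along the diffeomorphisms $u_r^{-1}$, exploiting naturality of the Lie bracket. Throughout I will use that on $(R_{-\varepsilon}, R_\infty)$ one has $\bar{E}_3 = \frac{\partial}{\partial r}$, that $\bar{E}_1, \bar{E}_2$ are tangent to every leaf $R_r$, and that the constant-frame condition says precisely $\bar{E}_a|_{x_r} = (u_r^{-1})_* A_r^{-1} \bar{E}_a|_x$ for $a \in \{1,2\}$, where $A_r^{-1}$ is the pointwise self-adjoint automorphism of $T_x S$ and $x = u_r(x_r)$. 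Three elementary facts are invoked repeatedly: $(u_r)_* (u_r^{-1})_* = \id$ on $T_x S$; $A_r$ commutes with $B$ (being a polynomial in $B$); and, differentiating $A_r A_r^{-1} = I$, the identity $X(A_r^{-1}) = \sinh(r)\, A_r^{-1}\, X(B)\, A_r^{-1}$ for $X \in \Gamma(TR)$, where $X(\,\cdot\,)$ is the entrywise derivative of the matrix in the frame $\bar{s}$ (as in the statement, $r$ being constant on $R$).

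For the first identity, since $\bar{E}_1, \bar{E}_2$ are tangent to the foliation, $[\bar{E}_1, \bar{E}_2]|_{x_r}$ is the intrinsic leaf bracket of their restrictions to $R_r$, and those restrictions are the $(u_r^{-1})_*$-pushforwards of the vector fields $A_r^{-1}\bar{E}_1, A_r^{-1}\bar{E}_2$ on $R$. Naturality of the bracket together with $(u_r)_*(u_r^{-1})_* = \id$ gives $A_r (u_r)_* [\bar{E}_1, \bar{E}_2]|_{x_r} = A_r [A_r^{-1}\bar{E}_1, A_r^{-1}\bar{E}_2]|_x$. Expanding this bracket in the frame $(\bar{E}_1, \bar{E}_2)$ produces two pieces: the term carrying no derivative of $A_r^{-1}$, which equals $\det(A_r^{-1})\,[\bar{E}_1,\bar{E}_2] = \tfrac{1}{\det A_r}[\bar{E}_1,\bar{E}_2]$ because $[\bar{E}_b,\bar{E}_c]$ is alternating in $b,c \in \{1,2\}$; and the term carrying a derivative, which by the identity for $X(A_r^{-1})$ equals $\sinh(r)\, A_r^{-1}\bigl((A_r^{-1}\bar{E}_1)(B)(A_r^{-1}\bar{E}_2) - (A_r^{-1}\bar{E}_2)(B)(A_r^{-1}\bar{E}_1)\bigr)$. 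The decisive observation is that $\beta(X,Y) := X(B)Y - Y(B)X$ is alternating and bilinear in $X, Y \in T_x R$, hence on the $2$-dimensional $T_x R$ it scales by the determinant: $\beta(A_r^{-1}X, A_r^{-1}Y) = \det(A_r^{-1})\,\beta(X,Y)$. Substituting this and multiplying through by $A_r$, the factor $A_r A_r^{-1}$ disappears and the two $\det(A_r^{-1}) = 1/\det A_r$ factors combine into precisely $\tfrac{1}{\det A_r}\bigl(A_r [\bar{E}_1,\bar{E}_2]|_x + \sinh(r)(\bar{E}_1(B)\bar{E}_2 - \bar{E}_2(B)\bar{E}_1)|_x\bigr)$.

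For the second identity, I will compute $[\bar{E}_j, \tfrac{\partial}{\partial r}] = -\mathcal{L}_{\partial/\partial r}\bar{E}_j$ via the normal geodesic flow $\phi_h$ of $\tfrac{\partial}{\partial r}$, which sends $x_r$ to $x_{r+h}$ and satisfies $\phi_{-h}\circ\phi_{r+h} = \phi_r$. Combining $\bar{E}_j|_{x_{r+h}} = (\phi_{r+h})_* A_{r+h}^{-1}\bar{E}_j|_x$ with these flow relations gives $(\phi_{-h})_*\bar{E}_j|_{\phi_h(x_r)} = (\phi_r)_* A_{r+h}^{-1}\bar{E}_j|_x$, whence $\mathcal{L}_{\partial/\partial r}\bar{E}_j|_{x_r} = (\phi_r)_*\bigl((\partial_r A_r^{-1})\bar{E}_j|_x\bigr)$; the right side is tangent to $R_r$, so the superscript $\top$ in the statement is automatic here. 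Using $\partial_r A_r^{-1} = -A_r^{-1}(\partial_r A_r)A_r^{-1}$ with $\partial_r A_r = \sinh(r) I - \cosh(r) B$, then applying $A_r (u_r)_*$ and cancelling as before, one obtains $A_r (u_r)_* [\bar{E}_j, \tfrac{\partial}{\partial r}]|_{x_r}^{\top} = (\sinh(r) I - \cosh(r) B)\, A_r^{-1}\bar{E}_j|_x$. Setting $r = 0$ identifies $[\bar{E}_\ell, \tfrac{\partial}{\partial r}]|_x^{\top} = -B\bar{E}_\ell|_x$, so $\sum_\ell (A_r|_x^{-1})^\ell_j A_r [\bar{E}_\ell, \tfrac{\partial}{\partial r}]|_x^{\top} = -A_r B A_r^{-1}\bar{E}_j|_x = -B\bar{E}_j|_x$ by the commutation of $A_r$ and $B$; it then remains only to check the algebraic identity $(\sinh(r) I - \cosh(r) B)A_r^{-1} = -B + \sinh(r)(I - B^2)A_r^{-1}$, which follows by writing $-B = -B A_r A_r^{-1}$ and expanding $-B A_r = -\cosh(r) B + \sinh(r) B^2$.

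The routine portions are the two frame-expansions of the brackets and the closing algebraic identity in $A_r$, $B$, $\sinh$ and $\cosh$. The one step that is not mechanical — and the thing to get right — is recognizing that $X(B)Y - Y(B)X$ is alternating bilinear, which is exactly what forces the several $A_r$-conjugations in the first formula to collapse to a single factor $1/\det A_r$; without it the $A_r^{-1}$'s do not visibly cancel. A secondary point deserving care is confirming that $\tfrac{\partial}{\partial r}$ genuinely generates the flow $\phi_h$ with $\phi_h(x_r) = x_{r+h}$, and differentiating the $r$-dependent automorphism $A_{r+h}^{-1}$ correctly inside the Lie-derivative computation for the second formula.
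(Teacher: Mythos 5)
Your proposal is correct. For the first identity your argument is essentially the paper's: both transport the bracket to $R$ via naturality, expand $[A_r^{-1}\bar{E}_1, A_r^{-1}\bar{E}_2]$ in the frame, and use the fact that the coefficient-free term and the derivative term are alternating bilinear in the two slots, hence scale by $\det(A_r^{-1})$ on the $2$-dimensional $T_xS$; the paper moves the derivative onto $A_r$ via $A_r X(A_r^{-1}) = -X(A_r)A_r^{-1}$ and then uses $X(A_r) = -\sinh(r)X(B)$, which is only cosmetically different from your explicit formula for $X(A_r^{-1})$. For the second identity you take a genuinely different route. The paper introduces the $r$-translation diffeomorphism $\tilde{u}_r \colon (S_{r-\varepsilon}, S_{r+\varepsilon}) \to (S_{-\varepsilon}, S_{+\varepsilon})$ together with the extended operator $\tilde{A}_r$ built from the leaf Weingarten maps, writes $(\tilde{u}_r)_*\bar{E}_j = \tilde{A}_r^{-1}\bar{E}_j$, and lands directly on the stated right-hand side after computing $\partial_t B_t|_{t=0} = B^2 - I$; this keeps the two halves of the lemma methodologically parallel. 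You instead compute $[\bar{E}_j, \tfrac{\partial}{\partial r}] = -\mathcal{L}_{\partial/\partial r}\bar{E}_j$ along the normal geodesic flow, which yields the cleaner closed form $(\sinh(r)I - \cosh(r)B)A_r^{-1}\bar{E}_j|_x$ and makes manifest that the projection $\top$ is vacuous for this bracket; the price is the extra step of specializing to $r = 0$ to identify $[\bar{E}_\ell, \tfrac{\partial}{\partial r}]|_x^{\top} = -B\bar{E}_\ell$ and then verifying the short algebraic identity $(\sinh(r)I - \cosh(r)B)A_r^{-1} = -B + \sinh(r)(I - B^2)A_r^{-1}$, which you do correctly using that $A_r$ and $B$ commute. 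Both arguments are sound; yours buys an explicit formula for the transported bracket that the paper only obtains implicitly, while the paper's avoids any appeal to the flow-characterization of the Lie derivative.
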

\begin{proof}
We have
\begin{align*}
A_r {\left( u_r \right)}_* {\left. {\left[ \bar{E}_1, \bar{E}_2 \right]} \right|}_{x_r} & = A_r {\left( u_r \right)}_* {\left. {\left[ {\left. \bar{E}_1 \right|}_{R_r}, {\left. \bar{E}_2 \right|}_{R_r} \right]} \right|}_{x_r} = A_r {\left. {\left[ A_r^{-1} {\left. \bar{E}_1 \right|}_R, A_r^{-1} {\left. \bar{E}_2 \right|}_R \right]} \right|}_x.
\end{align*}
At \( x \), the vector \( A_r {\left[ A_r^{-1} \bar{E}_1, A_r^{-1} \bar{E}_2 \right]} \) equals
\begin{align*}
& A_r {\left( {\left( A_r^{-1} \right)}^k_1 {\left( A_r^{-1} \right)}^{\ell}_2 {\left[ \bar{E}_k, \bar{E}_{\ell} \right]} + {\left( A_r^{-1} \bar{E}_1 \right)} {\left( A_r^{-1} \right)} \bar{E}_2 - {\left( A_r^{-1} \bar{E}_2 \right)} {\left( A_r^{-1} \right)} \bar{E}_1 \right)} \\
& \qquad = \begin{aligned}[t]
& A_r {\left( {\left( A_r^{-1} \right)}^1_1 {\left( A_r^{-1} \right)}^2_2 - {\left( A_r^{-1} \right)}^2_1 {\left( A_r^{-1} \right)}^1_2 \right)} {\left[ \bar{E}_1, \bar{E}_2 \right]} \\
& - {\left( A_r^{-1} \bar{E}_1 \right)} {\left( A_r \right)} A_r^{-1} \bar{E}_2 + {\left( A_r^{-1} \bar{E}_2 \right)} {\left( A_r \right)} A_r^{-1} \bar{E}_1
\end{aligned} \\
& \qquad = A_r \det \! {\left( A_r^{-1} \right)} {\left[ \bar{E}_1, \bar{E}_2 \right]} - {\left( {\left( A_r^{-1} \right)}^k_1 {\left( A_r^{-1} \right)}^{\ell}_2 - {\left( A_r^{-1} \right)}^k_2 {\left( A_r^{-1} \right)}^{\ell}_1 \right)} \bar{E}_k {\left( A_r \right)} \bar{E}_{\ell} \\
& \qquad = \det \! {\left( A_r^{-1} \right)} A_r {\left[ \bar{E}_1, \bar{E}_2 \right]} - \det \! {\left( A_r^{-1} \right)} {\left( \bar{E}_1 {\left( A_r \right)} \bar{E}_2 - \bar{E}_2 {\left( A_r \right)} \bar{E}_1 \right)},
\end{align*}
where the Einstein summation convention is assumed. This proves the former.

For the latter, let \( \tilde{u}_r \colon (S_{r - \varepsilon}, S_{r + \varepsilon}) \to (S_{-\varepsilon}, S_{+\varepsilon}) \) be the ``\( r \)-translation'' map, which is an orientation-preserving diffeomorphism. Note that \( {\left. \tilde{u}_r \right|}_{S_r} = u_r \colon S_r \to S \). Let \( \tilde{A}_r \) be the extension of \( A_r \) to \( (S_{-\varepsilon}, S_{+\varepsilon}) \) by replacing \( B \) with the family of Weingarten maps of the leaf surfaces in \( (S_{-\varepsilon}, S_{+\varepsilon}) \). We have
\begin{align*}
A_r {\left( u_r \right)}_* {\left. {\left[ \bar{E}_j, \frac{\partial}{\partial r} \right]} \right|}_{x_r}^{\top} & = A_r {\left. {\left[ {\left( \tilde{u}_r \right)}_* \bar{E}_j, {\left( \tilde{u}_r \right)}_* \frac{\partial}{\partial r} \right]} \right|}_x^{\top} = A_r {\left. {\left[ \tilde{A}_r^{-1} \bar{E}_j, \frac{\partial}{\partial r} \right]} \right|}_x^{\top}.
\end{align*}
At \( x \), we have
\begin{align*}
A_r {\left[ \tilde{A}_r^{-1} \bar{E}_j, \frac{\partial}{\partial r} \right]}^{\top} & = \sum_{\ell = 1}^2 A_r {\left( A_r^{-1} \right)}^{\ell}_j {\left[ \bar{E}_{\ell}, \frac{\partial}{\partial r} \right]}^{\top} - A_r \frac{\partial \tilde{A}_r|_{S_t}^{-1}}{\partial t} \bar{E}_j.
\end{align*}
Here, the second term equals
\begin{align*}
\frac{\partial \tilde{A}_r|_{S_t}}{\partial t} A_r^{-1} \bar{E}_j & = - \sinh(r) \frac{\partial B_t}{\partial t} A_r^{-1} \bar{E}_j = - \sinh(r) {\left( B^2 - I \right)} A_r^{-1} \bar{E}_j.
\end{align*}
This proves the latter.
\end{proof}

\begin{plem} \label{L:Lie-bracket-infinity}
Fix \( x \in R \) and let \( y = v^{-1}(x) \in R^{\infty} \). For each \( j \in \{ 1, 2 \} \), we have
\begin{align*}
V v_* {\left. {\left[ \bar{E}_1^{\infty}, \bar{E}_2^{\infty} \right]} \right|}_y & = \frac{V {\left. {\left[ \bar{E}_1, \bar{E}_2 \right]} \right|}_x + \frac{1}{\sqrt{2}} {\left. {\left( \bar{E}_1 (B) \bar{E}_2 - \bar{E}_2 (B) \bar{E}_1 \right)} \right|}_x}{\det {\left. V \right|}_x}, \\
V v_* {\left. {\left[ \bar{E}_j^{\infty}, \frac{\partial}{\partial r} \right]} \right|}_y^{\top} & = \sum_{\ell = 1}^2 {\left( V|_x^{-1} \right)}^{\ell}_j \, V {\left. {\left[ \bar{E}_{\ell}, \frac{\partial}{\partial r} \right]} \right|}_x^{\top} + (I+B) {\left. \bar{E}_j \right|}_x.
\end{align*}
\end{plem}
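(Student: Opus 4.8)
The plan is to run the proof of Lemma~\ref{L:Lie-bracket} almost verbatim, but in the oriented orthonormal frame bundle \( FW \) of the manifold \( W \) at infinity rather than in \( FM \): everywhere the isomorphism \( A_r {\left( u_r \right)}_* \colon {\left. FM \right|}_{S_r} \to {\left. FM \right|}_S \) appeared, I would use instead \( V_r {\left( v_r \right)}_* \colon {\left. FW \right|}_{S_r^{\infty}} \to {\left. FM \right|}_{S_r} \), and the leaf metric \( \I_r \) would be replaced by \( \I_r^{\infty} = \I_r {\left( V_r {\left( v_r \right)}_* {}\cdot{}, V_r {\left( v_r \right)}_* {}\cdot{} \right)} \). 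Two structural facts make this substitution go through. First, \( \bar{s}^{\infty} \) is, by construction, the pullback of \( \bar{s} \) along \( V_{\bullet} {\left( v_{\bullet} \right)}_* \) and is constant along \( \{ R_r^{\infty} \} \), so \( V_r {\left( v_r \right)}_* \bar{E}_i^{\infty} = \bar{E}_i \) on each leaf, exactly paralleling \( A_r {\left( u_r \right)}_* \bar{E}_i = \bar{E}_i \). Second, the equidistant foliations \( \{ S_r \} \) of \( M \) and \( \{ S_r^{\infty} \} \) of \( W \) carry the same arclength parameter \( r \), so the set-identities \( v_r \) assemble to a diffeomorphism \( v_{\bullet} \colon W \to (S_{-\varepsilon}, S_{\infty}) \subseteq M \) under which \( \frac{\partial}{\partial r} \) is literally the same vector field on both sides, and \( \bar{E}_j^{\infty} \) corresponds to \( \tilde{V}^{-1} \bar{E}_j \), where \( \tilde{V} \) is the fibrewise operator restricting to \( V_t = \frac{1}{\sqrt{2}} {\left( I - B_t \right)} \) on the leaf \( S_t \), with \( B_t \) the Weingarten map of \( S_t \) in \( M \) (cf.\ Lemma~\ref{L:foliation}).

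For the first identity I would note, as in Lemma~\ref{L:Lie-bracket}, that \( \bar{E}_1^{\infty}, \bar{E}_2^{\infty} \) are tangent to \( R^{\infty} \) along \( R^{\infty} \), so their bracket along \( R^{\infty} \) is intrinsic and equals
\[
V v_* {\left[ \bar{E}_1^{\infty}, \bar{E}_2^{\infty} \right]} \big|_y = V {\left[ V^{-1} \bar{E}_1, V^{-1} \bar{E}_2 \right]} \big|_x ;
\]
the purely algebraic expansion carried out in the proof of Lemma~\ref{L:Lie-bracket} (the Leibniz rule together with \( \det V^{-1} \)) then turns the right-hand side into \( {\left( V [\bar{E}_1, \bar{E}_2]|_x - {\left( \bar{E}_1(V) \bar{E}_2 - \bar{E}_2(V) \bar{E}_1 \right)}|_x \right)} / \det V|_x \), and substituting \( V = \frac{1}{\sqrt{2}} {\left( I - B \right)} \), so that \( \bar{E}_i(V) = - \frac{1}{\sqrt{2}} \bar{E}_i(B) \), gives the claimed formula. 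For the second identity I would again follow Lemma~\ref{L:Lie-bracket}: using the fibrewise operator \( \tilde{V} \) together with the Riccati-type identity \( \partial B_t / \partial t = B^2 - I \) of the foliation, the same Leibniz-rule computation, now with \( A_r \) replaced by \( V \) (hence \( \cosh r \) and \( \sinh r \) both by \( \frac{1}{\sqrt{2}} \)), yields
\[
V v_* {\left[ \bar{E}_j^{\infty}, \tfrac{\partial}{\partial r} \right]}^{\top} \! \big|_y = \sum_{\ell = 1}^2 {\left( V^{-1} \right)}^{\ell}_j \, V {\left[ \bar{E}_{\ell}, \tfrac{\partial}{\partial r} \right]}^{\top} \! \big|_x + \tfrac{1}{\sqrt{2}} {\left( I - B^2 \right)} V^{-1} \bar{E}_j \big|_x ,
\]
after which the last term collapses by the elementary identity \( \frac{1}{\sqrt{2}} {\left( I - B^2 \right)} V^{-1} = \frac{1}{\sqrt{2}} {\left( I - B \right)} {\left( I + B \right)} \cdot \sqrt{2} \, {\left( I - B \right)}^{-1} = I + B \), which is the assertion.

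I expect the main obstacle to be bookkeeping rather than anything conceptual: one must keep the three projections \( u_r \), \( v_r \), \( u_r^{\infty} \) and the two operators \( A_r \), \( V_r \) straight, and---most importantly---recognize that the derivative term in the second identity is governed by the \emph{\( M \)-side} Weingarten family \( B_t \) (equivalently by the Riccati identity \( \partial_t B_t = B_t^2 - I \)), not by the intrinsic geometry of the leaves \( S_r^{\infty} \) inside \( W \), which in the warped-product model \( \mathrm{d} r^2 \oplus \boldsymbol{e}^{2r} \I^{\infty} \) are umbilic with Weingarten map \( -I \) and would contribute incorrectly. As a consistency check, in that warped-product model one has \( {\left[ \bar{E}_j^{\infty}, \tfrac{\partial}{\partial r} \right]} = \bar{E}_j^{\infty} \), so the left-hand side of the second identity is simply \( \bar{E}_j|_x \); matching this against the right-hand side amounts to the identity \( \sum_{\ell} {\left( V^{-1} \right)}^{\ell}_j \, V [\bar{E}_{\ell}, \tfrac{\partial}{\partial r}]^{\top}|_x = - B \bar{E}_j|_x \), which---together with Lemma~\ref{L:P}---is the mechanism behind the vanishing \( P = 0 \) in~\eqref{E:PQ}.
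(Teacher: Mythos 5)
Your proposal is correct and follows essentially the same route as the paper: both identities are obtained by transporting the brackets through \( V v_* \) (respectively the fibrewise operator \( \tilde{V} \) built from the \( M \)-side Weingarten family \( B_t \)), expanding by the Leibniz rule exactly as in Lemma~\ref{L:Lie-bracket}, and collapsing the derivative term via \( \partial_t B_t = B^2 - I \) and \( \frac{1}{\sqrt{2}} {\left( I - B^2 \right)} V^{-1} = I + B \). Your warning that the second identity is governed by the extrinsic Weingarten maps of the leaves in \( M \), not the intrinsic geometry of \( W \), is precisely the point the paper's definition of \( \tilde{V} \) encodes, and your consistency check is sound.
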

\begin{proof}
We have
\begin{align*}
V v_* {\left. {\left[ \bar{E}_1^{\infty}, \bar{E}_2^{\infty} \right]} \right|}_y & = V v_* {\left. {\left[ {\left. \bar{E}_1^{\infty} \right|}_{R^{\infty}}, {\left. \bar{E}_2^{\infty} \right|}_{R^{\infty}} \right]} \right|}_y = V {\left. {\left[ V^{-1} {\left. \bar{E}_1 \right|}_R, V^{-1} {\left. \bar{E}_2 \right|}_R \right]} \right|}_x.
\end{align*}
At \( x \), the vector \( V {\left[ V^{-1} \bar{E}_1, V^{-1} \bar{E}_2 \right]} \) equals
\begin{align*}
& V {\left( {\left( V^{-1} \right)}^k_1 {\left( V^{-1} \right)}^{\ell}_2 {\left[ \bar{E}_k, \bar{E}_{\ell} \right]} + {\left( V^{-1} \bar{E}_1 \right)} {\left( V^{-1} \right)} \bar{E}_2 - {\left( V^{-1} \bar{E}_2 \right)} {\left( V^{-1} \right)} \bar{E}_1 \right)} \\
& \qquad = \begin{aligned}[t]
& V {\left( {\left( V^{-1} \right)}^1_1 {\left( V^{-1} \right)}^2_2 - {\left( V^{-1} \right)}^2_1 {\left( V^{-1} \right)}^1_2 \right)} {\left[ \bar{E}_1, \bar{E}_2 \right]} \\
& - {\left( V^{-1} \bar{E}_1 \right)} {\left( V \right)} V^{-1} \bar{E}_2 + {\left( V^{-1} \bar{E}_2 \right)} {\left( V \right)} V^{-1} \bar{E}_1
\end{aligned} \\
& \qquad = V \det \! {\left( V^{-1} \right)} {\left[ \bar{E}_1, \bar{E}_2 \right]} - {\left( {\left( V^{-1} \right)}^k_1 {\left( V^{-1} \right)}^{\ell}_2 - {\left( V^{-1} \right)}^k_2 {\left( V^{-1} \right)}^{\ell}_1 \right)} \bar{E}_k {\left( V \right)} \bar{E}_{\ell} \\
& \qquad = \det \! {\left( V^{-1} \right)} V {\left[ \bar{E}_1, \bar{E}_2 \right]} - \det \! {\left( V^{-1} \right)} {\left( \bar{E}_1 {\left( V \right)} \bar{E}_2 - \bar{E}_2 {\left( V \right)} \bar{E}_1 \right)},
\end{align*}
where the Einstein summation convention is assumed. This proves the former.

For the latter, let \( \tilde{v} \colon (S_{-\varepsilon}^{\infty}, S_{+\varepsilon}^{\infty}) \to (S_{-\varepsilon}, S_{+\varepsilon}) \) be the set-identity map, which is an orientation-preserving diffeomorphism. Let \( \tilde{V} \) be the extension of \( V \) to \( (S_{-\varepsilon}, S_{+\varepsilon}) \) by replacing \( B \) with the family of Weingarten maps of the leaf surfaces in \( (S_{-\varepsilon}, S_{+\varepsilon}) \). We have
\begin{align*}
V v_* {\left. {\left[ \bar{E}_j^{\infty}, \frac{\partial}{\partial r} \right]} \right|}_y^{\top} & = V {\left. {\left[ \tilde{v}_* \bar{E}_j^{\infty}, \tilde{v}_* \frac{\partial}{\partial r} \right]} \right|}_x^{\top} = V {\left. {\left[ \tilde{V}^{-1} \bar{E}_j, \frac{\partial}{\partial r} \right]} \right|}_x^{\top}.
\end{align*}
At \( x \), we have
\begin{align*}
V {\left[ \tilde{V}^{-1} \bar{E}_j, \frac{\partial}{\partial r} \right]}^{\top} & = \sum_{\ell = 1}^2 V {\left( V^{-1} \right)}^{\ell}_j {\left[ \bar{E}_{\ell}, \frac{\partial}{\partial r} \right]}^{\top} - V \frac{\partial \tilde{V}|_{S_r}^{-1}}{\partial r} \bar{E}_j.
\end{align*}
Here, the second term equals
\begin{align*}
\frac{\partial \tilde{V}|_{S_r}}{\partial r} V^{-1} \bar{E}_j & = - \frac{1}{\sqrt{2}} \frac{\partial B_r}{\partial r} V^{-1} \bar{E}_j = - \frac{1}{\sqrt{2}} {\left( B^2 - I \right)} V^{-1} \bar{E}_j = (I+B) \bar{E}_j.
\end{align*}
This proves the latter.
\end{proof}

\subsection{A proof of~(\ref{E:CS-normal-frame})} \label{AA:CS-normal-frame}

First of all, note that \( - \frac{1}{2} \tr = 4 \boldsymbol{\pi}^2 {\left\langle {}\cdot{}, {}\cdot{} \right\rangle} \) is an inner product on \( \mathfrak{so}(3) \). Let \( {\left\{ L_1, L_2, L_3 \right\}} \) be the orthonormal basis for \( \mathfrak{so}(3) \) defined by
\begin{align}
L_1 & = \begin{pmatrix*}[r]
0 & 0 & 0 \\
0 & 0 & -1 \\
0 & 1 & 0
\end{pmatrix*}, \qquad L_2 = \begin{pmatrix*}[r]
0 & 0 & 1 \\
0 & 0 & 0 \\
-1 & 0 & 0
\end{pmatrix*}, \qquad L_3 = \begin{pmatrix*}[r]
0 & -1 & 0 \\
1 & 0 & 0 \\
0 & 0 & 0
\end{pmatrix*}.
\end{align}
Consider locally near a point in \( (R_{-\varepsilon}, R_{\infty}) \). Say the point belongs to \( R_r \). Only in this proof, let \( \bar{\upsilon} = \bar{s}^* \bar{\omega} \) for convenience. By Lemma~\ref{L:curvature}, we have
\begin{align}
\bar{\upsilon} & = - \bar{\upsilon}^2_3 \otimes L_1 - \bar{\upsilon}^3_1 \otimes L_2 - \bar{\upsilon}^1_2 \otimes L_3, \\
\bar{s}^* \bar{\Omega} & = \bar{\varepsilon}^2 \wedge \bar{\varepsilon}^3 \otimes L_1 + \bar{\varepsilon}^3 \wedge \bar{\varepsilon}^1 \otimes L_2 + \bar{\varepsilon}^1 \wedge \bar{\varepsilon}^2 \otimes L_3.
\end{align}
Therefore, we have
\begin{align}
\begin{aligned}
\bar{s}^* \cs {\left( \bar{\omega} \right)} & = {\left\langle \bar{\upsilon} \wedge \bar{s}^* \bar{\Omega} \right\rangle} - \frac{1}{6} {\left\langle \bar{\upsilon} \wedge {\left[ \bar{\upsilon} \wedge \bar{\upsilon} \right]} \right\rangle} \\
& = \frac{1}{4 \boldsymbol{\pi}^2} {\left( \bar{\upsilon}^1_2 \wedge \bar{\upsilon}^2_3 \wedge \bar{\upsilon}^3_1 - \bar{\upsilon}^1_2 \wedge \bar{\varepsilon}^1 \wedge \bar{\varepsilon}^2 - \bar{\upsilon}^2_3 \wedge \bar{\varepsilon}^2 \wedge \bar{\varepsilon}^3 - \bar{\upsilon}^3_1 \wedge \bar{\varepsilon}^3 \wedge \bar{\varepsilon}^1 \right)}.
\end{aligned}
\end{align}

Meanwhile, since \( \bar{\varepsilon}^3 = \mathrm{d} r \), we have \( 0 = \mathrm{d} \bar{\varepsilon}^3 = - \bar{\upsilon}^3_1 \wedge \bar{\varepsilon}^1 - \bar{\upsilon}^3_2 \wedge \bar{\varepsilon}^2 \) by Cartan's first equation. This implies that
\begin{align}
\bar{\upsilon}^3_1 {\left( \bar{E}_2 \right)} & = \bar{\upsilon}^3_2 {\left( \bar{E}_1 \right)} \quad \text{and} \quad \bar{\upsilon}^3_1 {\left( \bar{E}_3 \right)} = \bar{\upsilon}^3_2 {\left( \bar{E}_3 \right)} = 0. \label{E:connection-vanish}
\end{align}
Also, for any \( i, j \in \{ 1, 2 \} \),
\begin{align}
\bar{\upsilon}^3_i {\left( \bar{E}_j \right)} & = - \bar{\upsilon}^i_3 {\left( \bar{E}_j \right)} = \I_M {\left( \bar{E}_i, - \bar{\del}^M_{\bar{E}_j} \bar{E}_3 \right)} = \I_M {\left( \bar{E}_i, B_r \bar{E}_j \right)} = {\left( B_r \right)}^i_j.
\end{align}
Therefore, \( \bar{\upsilon}^3_i = {\left( B_r \right)}^i_1 \, \bar{\varepsilon}^1 + {\left( B_r \right)}^i_2 \, \bar{\varepsilon}^2 \) for each \( i \in \{ 1, 2 \} \). It follows that
\begin{gather}
\begin{aligned}
\bar{\upsilon}^1_2 \wedge \bar{\upsilon}^2_3 \wedge \bar{\upsilon}^3_1 & = \bar{\upsilon}^1_2 \wedge {\left( - {\left( B_r \right)}^2_1 \, \bar{\varepsilon}^1 - {\left( B_r \right)}^2_2 \, \bar{\varepsilon}^2 \right)} \wedge {\left( {\left( B_r \right)}^1_1 \, \bar{\varepsilon}^1 + {\left( B_r \right)}^1_2 \, \bar{\varepsilon}^2 \right)} \\
& = \det \! {\left( B_r \right)} \, \bar{\upsilon}^1_2 \wedge \bar{\varepsilon}^1 \wedge \bar{\varepsilon}^2,
\end{aligned} \\
\bar{\upsilon}^2_3 \wedge \bar{\varepsilon}^2 \wedge \bar{\varepsilon}^3 + \bar{\upsilon}^3_1 \wedge \bar{\varepsilon}^3 \wedge \bar{\varepsilon}^1 = {\left( \bar{\upsilon}^2_3 {\left( \bar{E}_1 \right)} + \bar{\upsilon}^3_1 {\left( \bar{E}_2 \right)} \right)} \, \bar{\varepsilon}^1 \wedge \bar{\varepsilon}^2 \wedge \bar{\varepsilon}^3 = 0.
\end{gather}
Consequently, these two yield
\begin{align}
\bar{s}^* \cs {\left( \bar{\omega} \right)} & = \frac{1}{4 \boldsymbol{\pi}^2} {\left( \det B_r - 1 \right)} \, \bar{\upsilon}^1_2 \wedge \bar{\varepsilon}^1 \wedge \bar{\varepsilon}^2.
\end{align}
Here, \( \det B_r - 1 = K_r \) by the Gauss equation.

\subsection{A proof of the latter of Proposition~\ref{P:conformal-equivalence}} \label{AA:conformal-equivalence}

For each \( r \in (-\varepsilon, \infty) \), let \( w_r = u_r^{-1} \circ u_{\infty} \colon S_{\infty} \to S_r \) be the projection. By the former of Proposition~\ref{P:conformal-equivalence},
\begin{align}
w_r^* \I_r^{\infty} & = u_{\infty}^* {\left( u_r^{-1} \right)}^* \boldsymbol{e}^{2r} {\left( u_r^{\infty} \right)}^* \I^{\infty} = \boldsymbol{e}^{2r} u_{\infty}^* \I^{\infty}
\end{align}
for all \( r \in (-\varepsilon, \infty) \). That is, all the metrics \( w_r^* \I_r^{\infty} \) on \( S_{\infty} \) are conformally equivalent to each other. Therefore, it suffices to show the statement for a single \( r \in (-\varepsilon, \infty) \).

By the Killing-Hopf theorem, we may regard \( M \) as the quotient \( \mathbb{H}^3 / \Gamma \) of the hyperbolic space \( \mathbb{H}^3 \) of~\eqref{E:hyperbolic-space} by some Kleinian group \( \Gamma \subseteq \mathrm{PSL}_2(\mathbb{C}) \). Fix \( x \in S \) and consider locally near \( x \). Fix a representative of \( x \) in \( \mathbb{H}^3 \). What we have to show is summarized as follows.

\underline{Claim.}\, \textit{Fix \( x \in \mathbb{H}^3 \). Let \( S \) be any oriented smooth surface embedded in\/ \( \mathbb{H}^3 \) near \( x \) such that the normal exponential map \( w^{-1} \colon S \to S_{\infty} \subseteq \partial_{\infty} \mathbb{H}^3 \) is a diffeomorphism. Then \( {\left. w^* \I^{\infty} \right|}_y \) belongs to the conformal class of \( \partial_{\infty} \mathbb{H}^3 \) at \( y = w^{-1}(x) \).}

Via an isometry in \( \mathrm{PSL}_2(\mathbb{C}) \) sending \( y \) to \( \infty \), the vector \( {\left. \frac{\partial}{\partial r} \right|}_x \) is moved to a vector pointing exactly upward. Via translation, dilation, and the inversion, it is moved to the vector \( - {\left. \frac{\partial}{\partial t} \right|}_{\hat{\jmath}} \) at \( \hat{\jmath} = (0,0,1) \). Therefore, without loss of generality we may assume in advance that \( {\left. \frac{\partial}{\partial r} \right|}_x = - {\left. \frac{\partial}{\partial t} \right|}_{\hat{\jmath}} \). Then \( S \) is described by a graph \( t = f {\left( x^1, x^2 \right)} \) near \( x = \hat{\jmath} \), and
\begin{align}
\frac{\partial}{\partial r} & = \lambda {\left( f_1 \frac{\partial}{\partial x^1} + f_2 \frac{\partial}{\partial x^2} - \frac{\partial}{\partial t} \right)} \quad \text{near} \ x = \hat{\jmath}
\end{align}
with some positive \( \lambda \) and the partial derivatives satisfying \( f_1 {\left( \hat{\jmath} \right)} = f_2 {\left( \hat{\jmath} \right)} = 0 \).

The normal exponential map \( w^{-1} \colon S \to S_{\infty} \) is given by the point that the geodesic ray starting at \( \frac{\partial}{\partial r} \) heads toward. One can use elementary geometric techniques to readily find that it is given in the local coordinates \( x^1 \) and \( x^2 \) by
\begin{align}
w^{-1} {\left( x^1, x^2 \right)} & = {\left( x^1, x^2 \right)} + \frac{f}{1 + \sqrt{f_1^2 + f_2^2 + 1}} {\left( f_1, f_2 \right)}.
\end{align}
Its differential at \( \hat{\jmath} \) is
\begin{align}
{\left. \mathrm{d} {\left( w^{-1} \right)} \right|}_{\hat{\jmath}} & = {\left. \begin{pmatrix*}[c]
1 + \frac{1}{2} f_{11} & \frac{1}{2} f_{12} \\
\frac{1}{2} f_{21} & 1 + \frac{1}{2} f_{22}
\end{pmatrix*} \right|}_{\hat{\jmath}} = I + \frac{1}{2} H_f,
\end{align}
where \( H_f \) is the Hessian matrix of \( f \) at \( \hat{\jmath} \).

Meanwhile, since \( {\left. \I \right|}_{\hat{\jmath}} = (\mathrm{d} x^1)^2 + (\mathrm{d} x^2)^2 \), for each \( i, j \in \{ 1, 2 \} \) we have
\begin{align*}
{\left. B^i_j \right|}_{\hat{\jmath}} & = \I_{\mathbb{H}^3} {\left( {\left. {\left( \frac{\partial}{\partial x^i} + f_i \frac{\partial}{\partial t} \right)} \right|}_{\hat{\jmath}}, - \bar{\del}^{\mathbb{H}^3}_{\! {\left. {\left( \frac{\partial}{\partial x^j} + f_j \frac{\partial}{\partial t} \right)} \right|}_{\hat{\jmath}}} \frac{\partial}{\partial r} \right)} = - \I_{\mathbb{H}^3} {\left( {\left. \frac{\partial}{\partial x^i} \right|}_{\hat{\jmath}}, \bar{\del}^{\mathbb{H}^3}_{\! {\left. \frac{\partial}{\partial x^j} \right|}_{\hat{\jmath}}} \frac{\partial}{\partial r} \right)} \\
& = - {\left. \frac{\partial}{\partial x^j} \right|}_{\hat{\jmath}} \I_{\mathbb{H}^3} {\left( \frac{\partial}{\partial x^i}, \frac{\partial}{\partial r} \right)} + \I_{\mathbb{H}^3} {\left( \bar{\del}^{\mathbb{H}^3}_{\! {\left. \frac{\partial}{\partial x^j} \right|}_{\hat{\jmath}}} \frac{\partial}{\partial x^i}, {\left. \frac{\partial}{\partial r} \right|}_{\hat{\jmath}} \right)} \\
& = - {\left. \frac{\partial}{\partial x^j} \right|}_{\hat{\jmath}} \frac{\lambda f_i}{f^2} - \I_{\mathbb{H}^3} {\left( \bar{\del}^{\mathbb{H}^3}_{\! {\left. \frac{\partial}{\partial x^j} \right|}_{\hat{\jmath}}} \frac{\partial}{\partial x^i}, {\left. \frac{\partial}{\partial t} \right|}_{\hat{\jmath}} \right)} = - f_{ij} {\left( \hat{\jmath} \right)} - \delta_{ij}.
\end{align*}
That is, \( {\left. B \right|}_{\hat{\jmath}} = - I - H_f \). Consequently, \( {\left. (I-B) \right|}_{\hat{\jmath}} \, {\left. \mathrm{d} w \right|}_y = 2I \), so that
\begin{align}
{\left. w^* \I^{\infty} \right|}_y & = \frac{1}{2} {\left. \I {\left( (I-B) w_* {}\cdot{}, (I-B) w_* {}\cdot{} \right)} \right|}_y = 2 {\left. {\left( {\left( \mathrm{d} x^1 \right)}^2 + {\left( \mathrm{d} x^2 \right)}^2 \right)} \right|}_y,
\end{align}
which belongs to the conformal class of \( \partial_{\infty} \mathbb{H}^3 \) at \( y \).

\subsubsection*{Acknowledgements}

The author expresses gratitude to the advisor Jinsung Park for sharing profound insights and for invaluable discussions and comments on various aspects of this research. The author is grateful to the coadvisor Hyungryul Baik for helpful feedback and discussions. The author also thanks Inhyeok Choi for fruitful discussions on convex-cocompact hyperbolic geometry.

\begin{bibdiv}
\begin{biblist}

\bib{BaseilhacCS}{article}{
    author={Baseilhac, St{\'e}phane},
    title={Chern Simons theory in dimension three},
    status={unpublished lecture note},
    date={2009},
    eprint={https://imag.umontpellier.fr/~baseilhac/CS.pdf},
}

\bib{MR3934591}{article}{
    author={Bridgeman, Martin},
    author={Brock, Jeffrey},
    author={Bromberg, Kenneth},
    title={Schwarzian derivatives, projective structures, and the Weil-Petersson gradient flow for renormalized volume},
    journal={Duke Math. J.},
    volume={168},
    date={2019},
    number={5},
    pages={867--896},
    issn={0012-7094},
    review={\MR{3934591}},
    doi={10.1215/00127094-2018-0061},
}

\bib{MR4668096}{article}{
    author={Bridgeman, Martin},
    author={Brock, Jeffrey},
    author={Bromberg, Kenneth},
    title={The Weil-Petersson gradient flow of renormalized volume and 3-dimensional convex cores},
    journal={Geom. Topol.},
    volume={27},
    date={2023},
    number={8},
    pages={3183--3228},
    issn={1465-3060},
    review={\MR{4668096}},
    doi={10.2140/gt.2023.27.3183},
}

\bib{MR4571574}{article}{
    author={Bridgeman, Martin},
    author={Bromberg, Kenneth},
    author={Vargas Pallete, Franco},
    title={Convergence of the gradient flow of renormalized volume to convex cores with totally geodesic boundary},
    journal={Compos. Math.},
    volume={159},
    date={2023},
    number={4},
    pages={830--859},
    issn={0010-437X},
    review={\MR{4571574}},
    doi={10.1112/s0010437x2300708x},
}

\bib{MR3732685}{article}{
    author={Bridgeman, Martin},
    author={Canary, Richard D.},
    title={Renormalized volume and the volume of the convex core},
    language={English, with English and French summaries},
    journal={Ann. Inst. Fourier (Grenoble)},
    volume={67},
    date={2017},
    number={5},
    pages={2083--2098},
    issn={0373-0956},
    review={\MR{3732685}},
    doi={10.5802/aif.3130},
}

\bib{MR0615912}{book}{
    author={Bishop, Richard L.},
    author={Goldberg, Samuel I.},
    title={Tensor analysis on manifolds},
    note={Corrected reprint of the 1968 original},
    publisher={Dover Publications, Inc., New York},
    date={1980},
    pages={viii+280 pp. (loose errata)},
    isbn={0-486-64039-6},
    review={\MR{0615912}},
}

\bib{MR1224675}{book}{
    author={Bredon, Glen E.},
    title={Topology and geometry},
    series={Graduate Texts in Mathematics},
    volume={139},
    publisher={Springer-Verlag, New York},
    date={1993},
    pages={xiv+557},
    isbn={0-387-97926-3},
    review={\MR{1224675}},
    doi={10.1007/978-1-4757-6848-0},
}

\bib{MR1509253}{article}{
    author={Cartan, {\'E}lie},
    title={Sur les vari{\'e}t{\'e}s {\`a} connexion affine et la th{\'e}orie de la relativit{\'e} g{\'e}n{\'e}ralis{\'e}e (premi{\`e}re partie)},
    language={French},
    journal={Ann. Sci. \'Ecole Norm. Sup. (3)},
    volume={40},
    date={1923},
    pages={325--412},
    issn={0012-9593},
    review={\MR{1509253}},
}

\bib{MR1509255}{article}{
    author={Cartan, {\'E}lie},
    title={Sur les vari{\'e}t{\'e}s {\`a} connexion affine, et la th{\'e}orie de la relativit{\'e} g{\'e}n{\'e}ralis{\'e}e (premi{\`e}re partie) (Suite)},
    language={French},
    journal={Ann. Sci. \'Ecole Norm. Sup. (3)},
    volume={41},
    date={1924},
    pages={1--25},
    issn={0012-9593},
    review={\MR{1509255}},
}

\bib{MR1509263}{article}{
    author={Cartan, {\'E}lie},
    title={Sur les vari{\'e}t{\'e}s {\`a} connexion affine, et la th{\'e}orie de la relativit{\'e} g{\'e}n{\'e}ralis{\'e}e (deuxi{\`e}me partie)},
    language={French},
    journal={Ann. Sci. \'Ecole Norm. Sup. (3)},
    volume={42},
    date={1925},
    pages={17--88},
    issn={0012-9593},
    review={\MR{1509263}},
}

\bib{MR0353327}{article}{
    author={Chern, Shiing-Shen},
    author={Simons, James},
    title={Characteristic forms and geometric invariants},
    journal={Ann. of Math. (2)},
    volume={99},
    date={1974},
    pages={48--69},
    issn={0003-486X},
    review={\MR{0353327}},
    doi={10.2307/1971013},
}

\bib{EinsteinTP}{article}{
    author={Einstein, Albert},
    title={Riemann-Geometrie mit Aufrechterhaltung des Begriffes des Fernparallelismus},
    journal={Preussische Akademie der Wissenschaften, Phys.-math. Klasse, Sitzungsberichte},
    date={1928},
    pages={217-221},
}

\bib{EpsteinWS}{article}{
    author={Epstein, Charles L.},
    title={Envelopes of Horospheres and Weingarten Surfaces in Hyperbolic 3-Space},
    status={preprint},
    date={1984},
    eprint={https://www2.math.upenn.edu/~cle/papers/WeingartenSurfaces.pdf},
}

\bib{MR1337109}{article}{
    author={Freed, Daniel S.},
    title={Classical Chern-Simons Theory. I},
    journal={Adv. Math.},
    volume={113},
    date={1995},
    number={2},
    pages={237--303},
    issn={0001-8708},
    review={\MR{1337109}},
    doi={10.1006/aima.1995.1039},
}

\bib{MR3159164}{article}{
    author={Guillarmou, Colin},
    author={Moroianu, Sergiu},
    title={Chern-Simons line bundle on Teichm\"{u}ller space},
    journal={Geom. Topol.},
    volume={18},
    date={2014},
    number={1},
    pages={327--377},
    issn={1465-3060},
    review={\MR{3159164}},
    doi={10.2140/gt.2014.18.327},
}

\bib{MR1758076}{article}{
    author={Graham, C. Robin},
    title={Volume and area renormalizations for conformally compact Einstein metrics},
    booktitle={The Proceedings of the 19th Winter School ``Geometry and Physics'' (Srn{\'\i}, 1999)},
    journal={Rend. Circ. Mat. Palermo (2) Suppl.},
    date={2000},
    number={63},
    pages={31--42},
    issn={1592-9531},
    review={\MR{1758076}},
}

\bib{MR1644988}{article}{
    author={Henningson, M.},
    author={Skenderis, K.},
    title={The holographic Weyl anomaly},
    journal={J. High Energy Phys.},
    date={1998},
    number={7},
    pages={Paper 23, 12},
    issn={1126-6708},
    review={\MR{1644988}},
    doi={10.1088/1126-6708/1998/07/023},
}

\bib{MR1218931}{article}{
    author={Kirk, Paul},
    author={Klassen, Eric},
    title={Chern-Simons Invariants of 3-Manifolds Decomposed along Tori and the Circle Bundle Over the Representation Space of {$T^2$}},
    journal={Comm. Math. Phys.},
    volume={153},
    date={1993},
    number={3},
    pages={521--557},
    issn={0010-3616},
    review={\MR{1218931}},
}

\bib{MR3784525}{article}{
    author={Kojima, Sadayoshi},
    author={McShane, Greg},
    title={Normalized entropy versus volume for pseudo-Anosovs},
    journal={Geom. Topol.},
    volume={22},
    date={2018},
    number={4},
    pages={2403--2426},
    issn={1465-3060},
    review={\MR{3784525}},
    doi={10.2140/gt.2018.22.2403},
}

\bib{MR2328927}{article}{
    author={Krasnov, Kirill},
    author={Schlenker, Jean-Marc},
    title={Minimal surfaces and particles in 3-manifolds},
    journal={Geom. Dedicata},
    volume={126},
    date={2007},
    pages={187--254},
    issn={0046-5755},
    review={\MR{2328927}},
    doi={10.1007/s10711-007-9132-1},
}

\bib{MR2386723}{article}{
    author={Krasnov, Kirill},
    author={Schlenker, Jean-Marc},
    title={On the Renormalized Volume of Hyperbolic 3-Manifolds},
    journal={Comm. Math. Phys.},
    volume={279},
    date={2008},
    number={3},
    pages={637--668},
    issn={0010-3616},
    review={\MR{2386723}},
    doi={10.1007/s00220-008-0423-7},
}

\bib{MR3887684}{book}{
    author={Lee, John M.},
    title={Introduction to Riemannian manifolds},
    series={Graduate Texts in Mathematics},
    volume={176},
    edition={2},
    publisher={Springer, Cham},
    date={2018},
    pages={xiii+437},
    isbn={978-3-319-91754-2},
    isbn={978-3-319-91755-9},
    review={\MR{3887684}},
}

\bib{LeeCM}{article}{
    author={Lee, Dongha},
    title={Complex-valued extension of mean curvature for surfaces in Riemann-Cartan geometry},
    status={arXiv preprint},
    date={2025},
    eprint={https://arxiv.org/abs/2502.12647},
}

\bib{MR3586015}{book}{
    author={Marden, Albert},
    title={Hyperbolic manifolds},
    note={An introduction in 2 and 3 dimensions},
    publisher={Cambridge University Press, Cambridge},
    date={2016},
    pages={xviii+515},
    isbn={978-1-107-11674-0},
    review={\MR{3586015}},
    doi={10.1017/CBO9781316337776},
}

\bib{MR3228423}{article}{
    author={McIntyre, Andrew},
    author={Park, Jinsung},
    title={Tau function and Chern-Simons invariant},
    journal={Adv. Math.},
    volume={262},
    date={2014},
    pages={1--58},
    issn={0001-8708},
    review={\MR{3228423}},
    doi={10.1016/j.aim.2014.05.005},
}

\bib{MR0815482}{article}{
    author={Neumann, Walter D.},
    author={Zagier, Don},
    title={Volumes of hyperbolic three-manifolds},
    journal={Topology},
    volume={24},
    date={1985},
    number={3},
    pages={307--332},
    issn={0040-9383},
    review={\MR{0815482}},
    doi={10.1016/0040-9383(85)90004-7},
}

\bib{MR1813434}{article}{
    author={Patterson, S. J.},
    author={Perry, Peter A.},
    title={The divisor of Selberg's zeta function for Kleinian groups},
    note={Appendix~A by Charles Epstein},
    journal={Duke Math. J.},
    volume={106},
    date={2001},
    number={2},
    pages={321--390},
    issn={0012-7094},
    review={\MR{1813434}},
    doi={10.1215/S0012-7094-01-10624-8},
}

\bib{MR3188032}{article}{
    author={Schlenker, Jean-Marc},
    title={The renormalized volume and the volume of the convex core of quasifuchsian manifolds},
    journal={Math. Res. Lett.},
    volume={20},
    date={2013},
    number={4},
    pages={773--786},
    issn={1073-2780},
    review={\MR{3188032}},
    doi={10.4310/MRL.2013.v20.n4.a12},
}

\bib{MR0648524}{article}{
    author={Thurston, William P.},
    title={Three-dimensional manifolds, Kleinian groups and hyperbolic geometry},
    journal={Bull. Amer. Math. Soc. (N.S.)},
    volume={6},
    date={1982},
    number={3},
    pages={357--381},
    issn={0273-0979},
    review={\MR{0648524}},
    doi={10.1090/S0273-0979-1982-15003-0},
}

\bib{MR1997440}{article}{
    author={Takhtajan, Leon A.},
    author={Teo, Lee-Peng},
    title={Liouville Action and Weil-Petersson Metric on Deformation Spaces, Global Kleinian Reciprocity and Holography},
    journal={Comm. Math. Phys.},
    volume={239},
    date={2003},
    number={1-2},
    pages={183--240},
    issn={0010-3616},
    review={\MR{1997440}},
    doi={10.1007/s00220-003-0878-5},
}

\bib{MR4028105}{article}{
    author={Vargas Pallete, Franco},
    title={Local convexity of renormalized volume for rank-1 cusped manifolds},
    journal={Math. Res. Lett.},
    volume={26},
    date={2019},
    number={3},
    pages={903--919},
    issn={1073-2780},
    review={\MR{4028105}},
    doi={10.4310/MRL.2019.v26.n3.a10},
}

\bib{MR4887772}{article}{
    author={Vargas Pallete, Franco},
    title={Upper bounds on renormalized volume for Schottky groups},
    journal={Math. Ann.},
    volume={392},
    date={2025},
    number={1},
    pages={733--750},
    issn={0025-5831},
    review={\MR{4887772}},
    doi={10.1007/s00208-024-03086-2},
}

\bib{MR807069}{article}{
    author={Yoshida, Tomoyoshi},
    title={The {$\eta$}-invariant of hyperbolic 3-manifolds},
    journal={Invent. Math.},
    volume={81},
    date={1985},
    number={3},
    pages={473--514},
    issn={0020-9910},
    review={\MR{807069}},
    doi={10.1007/BF01388583},
}

\bib{MR0889594}{article}{
    author={Zograf, P. G.},
    author={Takhtadzhyan, L. A.},
    title={On the uniformization of Riemann surfaces and on the Weil-Petersson metric on the Teichm{\"u}ller and Schottky spaces},
    language={Russian},
    journal={Mat. Sb. (N.S.)},
    volume={132(174)},
    date={1987},
    number={3},
    pages={304--321, 444},
    issn={0368-8666},
    translation={
        journal={Math. USSR-Sb.},
        volume={60},
        date={1988},
        number={2},
        pages={297--313},
        issn={0025-5734},
    },
    review={\MR{0889594}},
    doi={10.1070/SM1988v060n02ABEH003170},
}

\end{biblist}
\end{bibdiv}

\end{document}